\numberwithin{equation}{section}
\newtheorem{theorem}[equation]{Theorem}
\newtheorem{proposition}[equation]{Proposition}
\newtheorem{lemma}[equation]{Lemma}
\newtheorem{corollary}[equation]{Corollary}
\theoremstyle{definition}
\newtheorem{rmk}[equation]{Remark}
\newenvironment{remark}[1][]{\begin{rmk}[#1] \pushQED{\qed}}{\popQED \end{rmk}}
\newtheorem{eg}[equation]{Example}
\newenvironment{example}[1][]{\begin{eg}[#1] \pushQED{\qed}}{\popQED \end{eg}}
\newtheorem{defnaux}[equation]{Definition}
\newenvironment{definition}[1][]{\begin{defnaux}[#1]\pushQED{\qed}}{\popQED \end{defnaux}}
\newtheorem{constaux}[equation]{Construction}
\newenvironment{construction}[1][]{\begin{constaux}[#1]\pushQED{\qed}}{\popQED \end{constaux}}
\newcommand{\bmu}{\mbox{$\raisebox{-0.59ex}
  {$l$}\hspace{-0.18em}\mu\hspace{-0.88em}\raisebox{-0.98ex}{\scalebox{2}
  {$\color{white}.$}}\hspace{-0.416em}\raisebox{+0.88ex}
  {$\color{white}.$}\hspace{0.46em}$}{}}
\newcommand{\cA}{\mathcal{A}}
\newcommand{\fA}{\mathfrak{A}}
\newcommand{\rA}{\mathrm{A}}
\newcommand{\fB}{\mathfrak{B}}
\newcommand{\rB}{\mathrm{B}}
\newcommand{\cC}{\mathcal{C}}
\newcommand{\cE}{\mathcal{E}}
\newcommand{\bF}{\mathbf{F}}
\newcommand{\cF}{\mathcal{F}}
\newcommand{\fF}{\mathfrak{F}}
\newcommand{\fG}{\mathfrak{G}}
\newcommand{\rK}{\mathrm{K}}
\newcommand{\fM}{\mathfrak{M}}
\newcommand{\bN}{\mathbf{N}}
\newcommand{\rN}{\mathrm{N}}
\newcommand{\bQ}{\mathbf{Q}}
\newcommand{\bR}{\mathbf{R}}
\newcommand{\cR}{\mathcal{R}}
\newcommand{\fR}{\mathfrak{R}}
\newcommand{\bS}{\mathbf{S}}
\newcommand{\fS}{\mathfrak{S}}
\newcommand{\fU}{\mathfrak{U}}
\newcommand{\fX}{\mathfrak{X}}
\newcommand{\fY}{\mathfrak{Y}}
\newcommand{\bZ}{\mathbf{Z}}
\newcommand{\fZ}{\mathfrak{Z}}
\newcommand{\fc}{\mathfrak{c}}
\newcommand{\arxiv}[1]{\href{http://arxiv.org/abs/#1}{{\tiny\tt arXiv:#1}}}
\newcommand{\DOI}[1]{\href{http://doi.org/#1}{\color{purple}{\tiny\tt DOI:#1}}}
\newcommand{\myuline}[1]{%
  \uline{\phantom{#1}}%
  \llap{\contour{white}{#1}}%
}
\DeclareMathOperator{\uRep}{\text{\myuline{\rm Rep}}}
\DeclareMathOperator{\uPerm}{\ul{Perm}}
\let\ul\underline
\renewcommand{\phi}{\varphi}
\DeclareMathOperator{\im}{im}
\DeclareMathOperator{\End}{End}
\DeclareMathOperator{\Aut}{Aut}
\DeclareMathOperator{\Hom}{Hom}
\DeclareMathOperator{\Surj}{Surj}
\DeclareMathOperator{\Sh}{Sh}
\DeclareMathOperator{\Sub}{Sub}
\DeclareMathOperator{\Res}{Res}
\DeclareMathOperator{\Ind}{Ind}
\DeclareMathOperator{\Pro}{Pro}
\newcommand{\id}{\mathrm{id}}
\newcommand{\orb}{\mathrm{orb}}
\newcommand{\gen}{\mathrm{gen}}
\newcommand{\Mat}{\mathrm{Mat}}
\newcommand{\op}{\mathrm{op}}
\renewcommand{\Vec}{\mathrm{Vec}}
\newcommand{\Set}{\mathbf{Set}}
\newcommand{\defn}[1]{\emph{#1}}
\newcommand{\bzero}{\mathbf{0}}
\newcommand{\bone}{\mathbf{1}}
\newcommand{\TT}{\mathbb{T}}
\title[Regular categories, oligomorphic monoids, and tensor categories]{Regular categories, oligomorphic monoids, \\ and tensor categories}
\author{Andrew Snowden}
\thanks{The author was supported by NSF grant DMS-2301871.}
\date{March 24, 2024}
\begin{document}

\begin{abstract}
Knop constructed a tensor category associated to a finitely-powered regular category equipped with a degree function. In recent work with Harman, we constructed a tensor category associated to an oligomorphic group equipped with a measure. In this paper, we explain how Knop's approach fits into our theory. The first, and most important, step describes finitely-powered regular categories in terms of oligomorphic monoids; this may be of independent interest. We go on to examine some aspects of this construction when the regular category one starts with is the category of $G$-sets for an oligomorphic group $G$, which yields some interesting examples. 
\end{abstract}

\maketitle
\tableofcontents

\section{Introduction}

\subsection{Overview}

Pre-Tannakian categories are a natural and important class of tensor categories generalizing representation categories of algebraic (super)groups. Deligne \cite{Deligne1} showed that the pre-Tannakian categories of moderate (i.e., exponential) growth are exactly the classical representation categories of algebraic supergroups (in characteristic~0). Constructing pre-Tannakian categories of superexponential growth is a challenging problem. Deligne \cite{Deligne2} (following work of Deligne--Milne \cite{DeligneMilne}) gave an important example of such a category, namely, the interpolation category $\uRep(\fS_t)$ where $t$ is a complex number.

Shortly after Deligne introduced the category $\uRep(\fS_t)$, Knop \cite{Knop1,Knop2} gave a more general construction of pre-Tannakian categories. Recall that a category is \defn{regular} if, roughly speaking, morphisms factor into injective and surjective\footnote{We use ``injective'' for ``monomorphism'' and ``surjective'' for ``regular epimorphism.''} pieces in a nice way; see Definition~\ref{defn:regcat} for details. Many familiar categories are regular, such as the categories of sets, groups, and rings; also, any abelian category is regular, as is any topos. Let $\cE$ be a regular category that is finitely powered, i.e., every object has finitely many subobjects. Knop introduced a notion \defn{degree function} on $\cE$. Given a degree function $\nu$, he constructed a tensor category $\TT(\cE; \nu)$, and gave sufficient conditions for its semi-simplification to be pre-Tannakian. If $\cE$ is the opposite of the category of finite sets then there is a 1-parameter family $\nu_t$ of degree functions, and $\TT(\cE; \nu_t)$ recovers Deligne's category $\uRep(\fS_t)$.

In recent work with Harman \cite{repst}, we gave another general construction of pre-Tannakian categories. Recall that an \defn{oligomorphic group} is a permutation group $(\fG, \Omega)$ such that $\fG$ has finitely many orbits on $\Omega^n$ for all $n$. Let $\fG$ be an oligomorphic (or even pro-oligomorphic) group. We introduced a certain notion \defn{measure} for $\fG$. Given such a measure $\mu$, we constructed a tensor category $\uPerm(\fG; \mu)$, and gave sufficient conditions for it to admit a pre-Tannakian envelope. If $\fG$ is the infinite symmetric group then there is a 1-parameter family $\mu_t$ of measures, and $\uPerm(\fG; \mu_t)$ recovers $\uRep(\fS_t)$.

The primary purpose of this paper is to show that the tensor categories produced by Knop's construction can all be obtained from the Harman--Snowden construction. Along the way, we obtain some interesting results about regular categories. We also use the picture we develop to produce some interesting examples of measures on oligomorphic groups.

\subsection{Main results}

Fix a finitely-powered regular category $\cE$. For simplicity, we suppose for the moment that $\cE$ is exact and the final object of $\cE$ has no proper subobject. Our goal is to relate Knop's tensor category $\TT(\cE; \nu)$ to one of the Harman--Snowden tensor categories $\uPerm(\fG; \mu)$. The first problem here is to determine the group $\fG$. This is addressed in the following theorem (the various terminology is defined in the body of the paper):

\begin{theorem} \label{mainthm1}
We construct a pro-oligomorphic monoid $\fM$ with the following properties.
\begin{enumerate}
\item There is a fully faithful embedding
\begin{displaymath}
\cE \to \text{\{ $\fM$-sets \}}, \qquad X \mapsto \fA(X).
\end{displaymath}
The essential image consists of smooth, monogenic, and simply connected $\fM$-sets.
\item Let $\cE^s$ be the subcategory of $\cE$ where morphisms are surjections, and let $\fG$ be the group of invertible elements of $\fM$ (which is pro-oligomorphic, by definition). Then there is a fully faithful embedding
\begin{displaymath}
\cE^s \to \text{\{ $\fG$-sets \}}, \qquad X \mapsto \fB(X).
\end{displaymath}
The essential image consists of smooth, transitive, and simply connected $\fG$-sets.
\item The restriction of $\fA(X)$ to $\fG$ decomposes as $\coprod_{Y \subset X} \fB(Y)$.
\end{enumerate}
\end{theorem}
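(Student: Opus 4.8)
The plan is to realize everything through a single \emph{universal pro-object} $\Omega$ in $\mathrm{Pro}(\cE)$ and its endomorphism monoid. Concretely, I would set $\fM := \End(\Omega)$, let $\fG := \Aut(\Omega)$ be its group of units, and define the two functors by
\[
\fA(X) := \Hom(\Omega, X), \qquad \fB(X) := \Surj(\Omega, X),
\]
with $\fM$ acting on $\fA(X)$ and $\fG$ on $\fB(X)$ by precomposition. Here $\Omega$ should be a \emph{projective Fra\"iss\'e limit}: a cofiltered limit of objects of $\cE$ along surjections that is (i) universal, in that it admits a surjection onto every object; (ii) projective with respect to surjections; and (iii) homogeneous, in that any isomorphism between two quotient objects of $\Omega$ is induced by an automorphism of $\Omega$. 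The point that makes this available is that a regular category supplies \emph{projective amalgamation} for free: pullbacks exist and regular epimorphisms are stable under pullback, so the fiber product $Y \times_X Z$ of two surjections onto $X$ again surjects onto $Y$ and onto $Z$. Joint embedding follows from the hypothesis that the final object has no proper subobject, which forces each $X \to 1$ to be a surjection and hence the projection $X \times X' \to X$ to be one as well. I would construct $\Omega$ as the limit of a cofiltered system assembled from iterated fiber products, using finite-poweredness to ensure the construction closes up after accounting for the finitely many subobjects present at each stage.

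Granting $\Omega$, part (c) is essentially formal, and I would prove it first since it bridges (a) and (b). Every morphism $f \colon \Omega \to X$ factors uniquely through its image as $\Omega \twoheadrightarrow Y \hookrightarrow X$ with $Y \subseteq X$ a subobject; this is exactly the regular epi--mono factorization, and it identifies
\[
\fA(X) = \Hom(\Omega, X) = \coprod_{Y \subseteq X} \Surj(\Omega, Y) = \coprod_{Y \subseteq X} \fB(Y).
\]
Restricting the $\fM$-action to $\fG$, an automorphism of $\Omega$ preserves the image of each $f$, so each summand $\fB(Y)$ is $\fG$-stable, giving (c). The same decomposition applied to $X = A^n$ for a generator $A$, together with $\Hom(\Omega, A^n) \cong \Hom(\Omega, A)^n$, is where \emph{finitely powered} does its work: the number of $\fG$-orbits on $n$-tuples equals the number of summands, i.e.\ the number of subobjects of $A^n$, which is finite. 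Combined with the transitivity in (b), this shows $\fG$ is pro-oligomorphic.

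For the core of (a) and (b) I would verify the essential-image conditions one at a time. \emph{Monogenicity} of $\fA(X)$ and \emph{transitivity} of $\fB(X)$ both reduce to lifting: given surjections $f_0, f_1 \colon \Omega \twoheadrightarrow X$, projectivity of $\Omega$ produces $m \in \End(\Omega)$ with $f_0 \circ m = f_1$, so $\fM$ is transitive on surjections and $\fA(X) = \fM \cdot f_0$, while homogeneity upgrades $m$ to an automorphism, giving transitivity of $\fG$ on $\fB(X)$. \emph{Smoothness} is routine: any $f \colon \Omega \to X$ factors through a stage $\Omega_i$, so its stabilizer is open in the inverse-limit topology on $\fM$. \emph{Full faithfulness} on $\cE^s$ follows because $\Omega$ is a generator: a morphism $X \to X'$ is determined by its effect on surjections out of $\Omega$, giving faithfulness, and conversely an equivariant map $\fB(X) \to \fB(X')$ carries a fixed surjection $\Omega \twoheadrightarrow X$ to one onto $X'$, which homogeneity shows descends to a well-defined surjection $X \twoheadrightarrow X'$, giving fullness.

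The main obstacle, and where I would spend the most care, is the homogeneity input---the back-and-forth argument showing that partial isomorphisms between quotients of $\Omega$ extend to genuine automorphisms---and, tied to it, the precise identification of the essential image through \emph{simple connectedness}. Here exactness should do the heavy lifting: to prove \emph{essential surjectivity} onto the smooth, monogenic (resp.\ transitive), simply connected $\fM$-sets (resp.\ $\fG$-sets), I would take such a set, read off the stabilizer of a generator as an equivalence relation on a quotient $\Omega_i$ of $\Omega$, and invoke Barr-exactness to realize the corresponding quotient as an honest object $X$ of $\cE$ with $\fA(X)$ (resp.\ $\fB(X)$) isomorphic to the given set. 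I expect \emph{simply connected} to be exactly the condition ruling out $\fM$-sets that arise as nontrivial connected covers of some $\fA(X)$, which need not be represented in $\cE$; matching this condition against the image factorization, rather than the homogeneity argument itself, is likely to be the most delicate bookkeeping step.
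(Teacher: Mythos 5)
Your overall architecture is exactly the paper's: a universal homogeneous pro-object $\Omega$ in the surjection category, $\fM=\End(\Omega)$, $\fG=\Aut(\Omega)$, $\fA(X)=\Hom(\Omega,X)$, $\fB(X)=\Surj(\Omega,X)$, with part (c) proved by the regular epi--mono factorization just as in Proposition~\ref{prop:resA}. Your amalgamation argument (stability of surjections under pullback), the orbit count via subobjects of products, smoothness, and the transitivity/monogenicity lifts are all sound, modulo the countability hypothesis the paper imposes so that maps out of the pro-object $\Omega$ can actually be lifted (see Remark~\ref{rmk:hypo}); and although you only address full faithfulness for $\cE^s$, fullness of $\fA$ on non-surjective morphisms does follow formally from what you have (an $\fM$-map out of $\fA(X)$ is determined by its restriction to the generating orbit $\fB(X)$, whose image lies in a single $\fB(Y')$), which is in substance the paper's computation via $\Ind(\fB(X))=\fA(X)$ (Proposition~\ref{prop:Ind-B}).

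The genuine gap is in your mechanism for essential surjectivity. You propose to ``read off the stabilizer of a generator as an equivalence relation on a quotient $\Omega_i$ of $\Omega$, and invoke Barr-exactness to realize the corresponding quotient as an honest object $X$ of $\cE$.'' This fails as stated: a transitive smooth $\fG$-set is $\fB(X)$ modulo a $\fG$-stable equivalence relation $\fR$, and such $\fR$ need \emph{not} arise from an equivalence relation on $X$ in $\cE$ --- exactness cannot see the obstruction. The escaping relations are precisely the $\fR_\Gamma$ induced by finite subgroups $\Gamma\subseteq\Aut(X)$: for instance, when $\cE$ is the opposite of finite non-empty sets, the swap relation on $\fB([2])=\Omega^{[2]}$ yields the set of $2$-element subsets of $\Omega$, a transitive smooth set of the form $\fB(X)/\Gamma$ that is not any $\fB(Y)$, and (as the paper notes in its example on $\bS(\fS;\fU)$) the corresponding equivalence relation is not effective in the relevant category. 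What is actually needed is the paper's dichotomy: after choosing $X$ minimal among its finitely many quotients, any $\fG$-stable equivalence relation on $\fB(X)$ either contains $\fB(X)\times_{\fB(Y)}\fB(X)$ for a proper quotient $Y$ of $X$, or equals $\fR_\Gamma$ for some $\Gamma\subseteq\Aut(X)$. Proving this requires the lattice analysis of ample subobjects of $X\times X$ (Lemmas~\ref{lem:Gclass-1}--\ref{lem:Gclass-2}): the maximal reflexive element $R$ of the family is an equivalence relation in $\cE$ --- this is the one place exactness enters, to form $Y=X/R$ --- while an ample $A$ with $A\circ A^t=\Delta_X$ is forced to be the graph of an automorphism. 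Only with the resulting classification $\fZ\cong\fB(X)/\Gamma$ (Proposition~\ref{prop:Gclass}) in hand can simple connectedness do its job of killing $\Gamma$ (Proposition~\ref{prop:sc}), and the same classification underlies the identification of the simply connected monogenic $\fM$-sets with the $\fA(X)$ (Proposition~\ref{prop:Asc}). Flagging this as ``delicate bookkeeping'' leaves the core argument of the theorem unsupplied.
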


The theorem shows that finitely-powered regular categories can be described by pro-oligomorphic monoids, and raises the interesting possibility of applying results from model theory to study such categories. For instance, suppose that $X$ is a group object in $\cE$. Then $\Gamma=\fA(X)$ is a group on which $\fG$ acts oligomorphically by group automorphisms. It follows that $\Gamma$ is an $\omega$-categorical group, i.e., $\Aut(\Gamma)$ has finitely many orbits on $\Gamma^n$ for all $n$. The structure of such groups is known to be greatly constrained; see, e.g., \cite{Wilson}.

\begin{example}
We give a few examples of the theorem.
\begin{enumerate}
\item Let $\cE$ be the opposite of the category of non-empty finite sets. Then $\fM$ consists of all self maps of a countably infinite set $\Omega$, and $\fG$ is the infinite symmetric group. We have $\fA([n])=\Omega^n$ and $\fB([n])=\Omega^{[n]}$, where $\Omega^{[n]}$ is the subset of $\Omega^n$ consisting of tuples with distint entries.
\item Let $\cE$ be the category of finite sets. This does not exactly fit the set-up of the theorem since the final object has a proper subobject, but the necessary modifications are minor. In this case, $\fM$ consists of all continuous self maps of the Cantor set.
\item Let $\cE$ be the category of finite groups. Then $\fM$ consists of all continuous endomorphisms of the free profinite group $\Omega$ on a countable generating set. We have $\fA(G)=\Hom(\Omega, G)$ and $\fB(G)=\Surj(\Omega, G)$. \qedhere
\end{enumerate}
\end{example}

Now that we have a pro-oligomorphic group $\fG$ associated to $\cE$, we can move on to tensor categories. The following is our main result in this direction.

\begin{theorem} \label{mainthm2}
Given a degree function $\nu$ on $\cE$, there is an associated measure $\mu$ on $\fG$. Moreover, Knop's category $\TT(\cE; \nu)$ is equivalent to a certain subcategory of the Harman--Snowden category $\uPerm(\fG; \mu)$ (after taking the Karoubi envelope).
\end{theorem}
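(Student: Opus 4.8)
The plan is to use the dictionary of Theorem~\ref{mainthm1} to transport the degree function to a measure, and then to realize both $\TT(\cE;\nu)$ and $\uPerm(\fG;\mu)$ as categories of weighted correspondences whose underlying combinatorial data agree under that dictionary. I would first construct $\mu$ from $\nu$. A degree function records, roughly, the generic fibre sizes of surjections in $\cE$, while a measure records the indices of nested open subgroups of $\fG$; both are multiplicative systems of scalars. Under the embedding $\fB\colon\cE^s\to\{\fG\text{-sets}\}$ of Theorem~\ref{mainthm1}(b), a surjection $f\colon X\to Y$ goes to a surjection $\fB(f)\colon\fB(X)\to\fB(Y)$ of transitive $\fG$-sets, i.e.\ to an inclusion of point stabilizers; I would set the measure of this inclusion equal to $\nu(f)$ and check that the axioms Knop imposes on $\nu$ are exactly what is needed for $\mu$ to be well defined. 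Since Theorem~\ref{mainthm1}(b) identifies the essential image of $\fB$ with the simply connected transitive smooth $\fG$-sets, this defines $\mu$ on the simply connected part; extending to arbitrary open subgroups requires descending along the finite covers relating a general transitive smooth $\fG$-set to its simply connected cover, and checking that the extension still satisfies the measure axioms is the first technical point.

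Next I would build a functor $\Phi\colon\TT(\cE;\nu)\to\uPerm(\fG;\mu)$ sending $X$ to the restriction of $\fA(X)$ to $\fG$. Knop's morphism space $\Hom_{\TT(\cE;\nu)}(X,Y)$ is free on the subobjects of $X\times Y$ in $\cE$, while $\Hom_{\uPerm(\fG;\mu)}(\fA(X),\fA(Y))$ is free on the $\fG$-orbits of $\fA(X)\times\fA(Y)$. Since $\fA$ is fully faithful and preserves products (Theorem~\ref{mainthm1}(a)), a subobject $R\subset X\times Y$ yields a sub-$\fM$-set $\fA(R)$, which by Theorem~\ref{mainthm1}(c) is the union of the $\fG$-orbits $\fB(Y')$ indexed by the subobjects $Y'\subset R$. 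Thus the two bases are related by the incidence (M\"obius) relation of the subobject poset --- a triangular, hence invertible, change of basis --- and I would define $\Phi$ on morphisms by this identification. This also pins down where the scalars enter, since the weight attached to a subobject and the weights attached to its constituent orbits differ by exactly this combinatorial factor.

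The crux is to show that $\Phi$ respects composition. Both categories compose correspondences by pulling back along the two projections from a triple product $X\times Y\times Z$, forming a fibre product over the middle factor $Y$, and pushing forward with a weight given by the degree, respectively the measure, of the fibres that are integrated out. I would verify these recipes agree by decomposing a subobject of $X\times Y\times Z$ both into its $\cE$-structure and, via $\fA$ and part~(c), into $\fG$-orbits, and then checking that Knop's degree weight on each collapsed fibre equals the $\mu$-measure of the corresponding orbit. This is precisely the compatibility designed into the construction of $\mu$, and I expect it to be the main obstacle: it requires the numerical matching of $\nu$ with $\mu$ to be compatible with the triangular change of basis above, uniformly across all fibre-product decompositions.

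Finally I would identify the essential image. By Theorem~\ref{mainthm1}(c) the object $\fA(X)$ decomposes as $\coprod_{Y\subset X}\fB(Y)$, so after passing to the Karoubi envelope the transitive summands $\fB(Y)$ split off as objects of the image. The resulting full subcategory, generated under direct sums and summands by the $\fB(Y)$, is the subcategory of $\uPerm(\fG;\mu)$ named in the statement; it is in general proper, since $\uPerm(\fG;\mu)$ also contains $\fG$-sets not arising from $\cE$. Combined with the full faithfulness established above, this exhibits $\TT(\cE;\nu)$ as equivalent to that subcategory after Karoubi envelope, as desired.
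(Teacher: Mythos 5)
Your overall architecture matches the paper's: transport $\nu$ to a measure, compare Hom-spaces via the upper-triangular change of basis between the subobject basis and the orbit basis, and split off the $\fB(Y)$'s after taking Karoubi envelopes. But there is a genuine error at the first step, and it propagates through the rest. You define $\mu(\fB(f))=\nu(f)$ for a surjection $f\colon Y\to X$ and assert that Knop's axioms are exactly the measure axioms. They are not: the degree-function axiom is base-change \emph{invariance} ($\nu(f')=\nu(f)$), whereas the measure axiom is \emph{additivity} over the atoms of a pullback, and by Proposition~\ref{prop:B-prod} the pullback of $\fB(Y)\to\fB(X)$ along $\fB(X')\to\fB(X)$ decomposes as $\coprod_W\fB(W)$ over \emph{all} ample subobjects $W$ of $Y\times_X X'$ --- generally many orbits, not one. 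With your assignment, axiom~(c) of Definition~\ref{defn:meas} would force $\nu(f)=\sum_W\nu(W\to X')$, contradicting base-change invariance whenever there is more than one ample subobject: for $\cE$ the category of finite sets with the trivial degree function, pulling back $\bone^{\amalg 2}\to\bone$ along itself yields seven ample subobjects of $\bone^{\amalg 4}$, so additivity would demand $1=7$. The correct definition (the paper's Construction~\ref{con:main}) is the M\"obius-inverted one,
\begin{displaymath}
\mu(\fB(Y)\to\fB(X))=\sum_{Z\subset Y,\ f(Z)=X}\bmu(Z,Y)\,\nu(f\vert_Z),
\end{displaymath}
equivalently characterized by $\nu(f)=\sum_Z\mu(\fB(Z)\to\fB(X))$; concretely, the trivial degree function produces the measure $\alpha(\fB(X))=(-1)^{\rho(X)-1}$, not the constant $1$. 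In other words, the triangular change of basis you correctly identify for morphism spaces must already be built into the \emph{definition} of $\mu$; your second paragraph gestures at this (``the weights \dots differ by exactly this combinatorial factor'') but contradicts your first, and with the naive $\mu$ the compatibility you defer to --- that push-forward scales by $\nu$, i.e.\ the key lemma $f_*(1_A)=\nu(f\colon A\to B)\cdot 1_B$ underlying the composition check --- simply fails. Note also that even with the correct definition, verifying additivity for $\mu$ is a nontrivial double-sum/M\"obius computation (the paper's proof that Construction~\ref{con:main} is a measure, via the $\mu$-constancy statement in Proposition~\ref{prop:char}), not an automatic consequence of Knop's axioms.

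A secondary issue: your plan to extend $\mu$ ``to arbitrary open subgroups'' by descending along finite covers is both unnecessary and, in general, impossible. The comparison theorem lives on the order $\cC$ whose atoms are the $\fB(X)$'s --- this is the ``certain subcategory'' of the statement --- and the paper's final section exhibits exactly such a measure (the $\beta$ measure) that does \emph{not} extend to the ambient pre-Galois category. So the extension step should be deleted rather than filled in; the remainder of your outline (objects $[X]\mapsto\coprod_{Y\subset X}\fB(Y)$, morphisms via the incidence relation, full faithfulness from the triangular basis change, essential surjectivity onto $\uPerm(\cC;\mu)^{\rm kar}$ by splitting the summands $\Vec_{\fB(X)}$) is sound once $\mu$ is defined correctly.
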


Additionally, we characterize those measures for $\fG$ that arise from a degree function on $\cE$, and we show by example that not every measure arises in this way, in general.

There is an important feature of the equivalence in the theorem that is worth mentioning. In the category $\TT(\cE; \nu)$ the basic objects and morphisms come from the $\fA(X)$ sets, while in the category $\uPerm(\fG; \mu)$ the basic objects and morphisms come from the $\fB(X)$ sets. The equivalence between the two categories does not preserve the ``preferred'' objects and morphisms. The change of basis between the two points of view is upper triangular, as Theorem~\ref{mainthm1}(c) shows.

\subsection{The pre-Galois case}

Let $G$ be an oligomorphic group and let $\cE=\bS(G)$ be the category of finitary smooth $G$-sets. This is a finitely-powered regular category. By Theorem~\ref{mainthm1} (or a mild generalization), there is a pro-oligomorphic group $\fG$ associated to $\cE$. The group $\fG$ is much more complicated than $G$; indeed, to each non-empty smooth $G$-set there is a corresponding \emph{transitive} $\fG$-set. If $G$ is the trivial group then $\fG$ is the homeomorphism group of the Cantor set.

Since $\cE$ is a regular category, it carries a trivial degree function, which induces a measure on $\fG$ by Theorem~\ref{mainthm2}. A simple computation shows that this measure is regular. We show that $\fG$ carries at most one other regular measure. The second regular measure exists precsiely when $G$ carries a regular measure valued in $\bF_2$, and it never comes from a degree function. This is related to a construction from \cite{dblexp}, as we explain.

The above picture is a source for some new examples and counter-examples of measures on oligomorphic groups. For instance, we prove:

\begin{theorem} \label{mainthm3}
There exist oligomorphic groups of arbitrarily fast growth admitting regular measures.
\end{theorem}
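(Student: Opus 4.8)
The strategy is to mine the pre-Galois construction for the required examples, using the observation already recorded above that the \emph{trivial} degree function on $\cE = \bS(G)$ produces a \emph{regular} measure on the associated group $\fG$ via Theorem~\ref{mainthm2}. Because this regular measure is supplied automatically, the whole burden of the theorem falls on a growth estimate: I must exhibit, for each prescribed growth rate $f$, an oligomorphic group $G$ whose pre-Galois group $\fG$ (or an appropriate oligomorphic quotient of it) is oligomorphic and grows faster than $f$.

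First I would invoke the classical fact that oligomorphic groups of arbitrarily fast growth exist --- for instance $G = \Aut(M)$ with $M$ a homogeneous structure in a relational language with enough symbols, or any of the standard iterated tree-like constructions --- and fix such a $G$ with orbit-growth $\ge f$. I then form $\cE = \bS(G)$ and the group $\fG$ of Theorem~\ref{mainthm1}, equipped with the regular measure coming from the trivial degree function.

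The key step is to transfer the growth of $G$ to $\fG$. For this I would use Theorem~\ref{mainthm1}: the functor $\fB$ realizes the defining transitive $G$-set $\Omega$ as a transitive $\fG$-set $\fB(\Omega)$, and the $\fG$-orbits on $\fB(\Omega)^n$ are controlled by the span and subobject combinatorics of $\Omega^n$ in $\cE = \bS(G)$. Since the $G$-orbits on $\Omega^n$ already inject into this data, the orbit-growth of $\fG$ dominates that of $G$, hence exceeds $f$; and since $G$ is oligomorphic, $\Omega^n$ has only finitely many sub-$G$-sets, which is what forces each $\fB(\Omega)^n$ to carry only finitely many $\fG$-orbits.

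The main obstacle lies precisely here: $\fG$ is \emph{a priori} only pro-oligomorphic (when $G$ is trivial it is the homeomorphism group of the Cantor set), so I must make the orbit correspondence on the $\fB$-sets sharp enough both to pin down an honestly oligomorphic group and to read off a clean lower bound for its growth in terms of $G$. Handling the profinite bookkeeping of the $\fB(\Omega)^n$ --- and, should $\fG$ itself fail to be oligomorphic, descending instead to the oligomorphic group detected by the $\bF_2$-valued regular measure on $G$ discussed above --- is the technical heart of the argument; the nonvanishing (regularity) of the trivial-degree measure is then immediate from the structural results of the body of the paper.
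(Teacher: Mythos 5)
Your skeleton matches the paper's proof: take a fast-growing oligomorphic $G$ acting transitively on a set $\Xi$, pass to $\cE=\bS(G)$ and the associated group $\fG$, use the regular measure $\alpha$ coming from the trivial degree function, and count $\fG$-orbits on $\fB(\Xi)^n$ via sub-$G$-sets of $\Xi^n$ (Proposition~\ref{prop:B-prod}; transitivity of $G$ on $\Xi$ is what makes every non-empty $G$-subset of $\Xi^n$ ample, so that both the lower bound $a_n$ and the finiteness transfer). However, the step you flag as the ``technical heart'' and leave open is genuinely needed, and your proposed fallback for it is wrong. To get an oligomorphic \emph{permutation group} you must show $\fG$ acts \emph{faithfully} on $\Omega=\fB(\Xi)$; finiteness of orbits on each power is not enough. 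The paper does this in one line: every atom of $\bS(G)$ is a subquotient of a power of $\Xi$, hence every atom of $\cC$ is a quotient of a subpower of $\fB(\Xi)$, and Hausdorffness of the non-archimedean structure then forces faithfulness. Your alternative --- ``descending to the oligomorphic group detected by the $\bF_2$-valued regular measure on $G$'' --- is a non sequitur: the $\bF_2$-measure (oddness) governs the existence of the \emph{second} measure $\beta$ on $\cC$ and has no bearing on producing a faithful oligomorphic action of $\fG$.

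There is a second gap you do not notice at all. The measure produced by Construction~\ref{con:main} lives only on the order $\cC=\bS(\fG;\fU)$, not on the full category $\bS(\fG)$ of finitary smooth $\fG$-sets, and ``$\fG$ admits a regular measure'' means the latter; your claim that ``the regular measure is supplied automatically'' conflates the two. The paper closes this by using Corollary~\ref{cor:Gclass} to show the stabilizer class $\fU$ is large (every open subgroup of $\fG$ contains a member of $\fU$ with finite index) and then invoking the extension result \cite[\S 4.2(d)]{repst} for regular measures across large stabilizer classes. Without this step your argument only shows that the pair $(\fG,\fU)$ carries a regular measure, which is strictly weaker than the statement of the theorem.
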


See Theorem~\ref{thm:fast} for a precise statement. Prior to this theorem, the fastest growth rate of an oligomorphic group known to admit a regular measure was roughly $\exp(\exp(n^2))$ \cite{dblexp,Kriz}. We note that an important open question is whether there exist pre-Tannakian categories of arbitrarily fast growth rate; see \cite{dblexp}. Unfortunately, the measures produced in our proof of the above theorem do not lead to such categories, as there are nilpotents of non-zero trace in the associated tensor categories.

\subsection{Related work}

While oligomorphic groups have been widely studied, oligomorphic monoids have received comparatively little attention. As far as we can tell, they were first introduced in \cite{MP}, though they were certainly anticipated by \cite{CN}. We note that the definition of oligomorphic monoid used in this paper is far stricter than that of \cite{MP}. The recent paper \cite{MS} discusses non-archimedean topologies on monoids, which is closely related.

In previous work \cite{cantor}, we studied the Harman--Snowden theory in the case where the oligomorphic group is the homeomorphism group of the Cantor set. This corresponds to Knop's theory when $\cE$ is the category of finite sets. Several results from this paper are generalizations of results from that paper.

\subsection*{Acknowledgments}

We thank Nate Harman and Ilia Nekrasov for helpful discussions.

\section{Regular categories} \label{s:regcat}

In this section, we review some material on regular categories.

\subsection{Basic definitions}

Let $\cE$ be a category. We recall two basic notions. First, if $Y \to X$ is a morphism then its \defn{kernel pair} is the diagram $Y \times_X Y \rightrightarrows Y$, assuming this fiber product exists. Second, a \defn{regular epimorphism} $Y \to X$ is a morphism that occurs as the co-equalizer of some diagram $Z \rightrightarrows Y$. Such a morphism is indeed an epimorphism. Moreover, if $\cE$ has fiber products, then a regular epimorphism is the cokernel of its kernel pair.

We now recall the following key concept:

\begin{definition} \label{defn:regcat}
A category $\cE$ is \defn{regular} if it satisfies the following conditions:
\begin{enumerate}
\item It is finitely complete (and in particular admits a final object $\bone$).
\item The kernel pair of any morphism admits a co-equalizer.
\item A base change of a regular epimorphism is a regular epimorphism. \qedhere
\end{enumerate}
\end{definition}

Fix a regular category $\cE$ for the rest of \S \ref{s:regcat}. We use the terms ``injection'' and ``surjection'' in place of ``monomorphism'' and ``regular epimorphism.'' We recall some standard properties of regular categories; see \cite{Barr, Gran} for details. Any morphism $f \colon Y \to X$ factors as $i \circ g$, where $i$ is injective and $g$ is surjective. This factorization is unique (up to isomorphism) and preserved by pull-backs. The image of $f$, denoted $\im(f)$, is the subobject $i$ of $X$. Clearly, $f$ is surjective if and only if $\im(f)=X$. A composition of surjective morphisms is surjective.

\subsection{Relations}

A \defn{relation} from an object $X$ to an object $Y$ is a subobject $A$ of $Y \times X$. If $A$ is a relation from $X$ to $Y$ its \defn{transpose}, denoted $A^t$, is the relation from $Y$ to $X$ obtained by transferring $A$ via the isomorphism $Y \times X = X \times Y$. Suppose $A$ is a relation from $X$ to $Y$, and $B$ is a relation from $Y$ to $Z$. The \defn{composition} of $A$ and $B$, denoted $B \circ A$, is the image of $B \times_Y A$ in $Z \times X$, which is a relation from $X$ to $Z$. It is a basic fact about regular categories that this operation is associative; see \cite[\S 2]{Gran}. The diagonal $\Delta_X \subset X \times X$ is the identity for composition.

A relation $R \subset X \times X$ is an \defn{equivalence relation} if it is reflexive ($\Delta_X \subset R$), symmetric ($R=R^t$), and transitive ($R \circ R \subset R$). The kernel pair $R$ of a morphism $f \colon Y \to X$ is an equivalence relation on $Y$ \cite[Lemma~2.2]{Gran}, and if $f$ is surjective then it is the cokernel of $R$. An equivalence relation is called \defn{effective} if it is the kernel pair of some morphism. 

\subsection{Exact categories} \label{ss:exact}

An \defn{exact category} is a regular category in which all equivalence relations are effective. Every regular category $\cE$ admits an \defn{exact completion}, denoted $\cE_{\rm ex/reg}$. The category $\cE_{\rm ex/reg}$ is exact (of course), and comes with a fully faithful functor $\cE \to \cE_{\rm ex/reg}$ that preserves finite limits and surjections; moreover, it is the universal such category. Every object of $\cE_{\rm ex/reg}$ is a quotient of an object in $\cE$ by an equivalence relation from $\cE$. A simple construction of the exact completion is given in \cite{Lack}, and the properties we assert are proved there as well. Due to the nice properties of the exact completion, we will typically not lose generality by assuming our categories are exact.

\subsection{Finitely-powered categories} \label{ss:fp}

We say that a category is \defn{finitely-powered} if every object has finitely many subobjects. Let $\cE$ be a finitely-powered regular category. We make a few observations.

(a) A morphism $f \colon X \to Y$ is determined by its graph $\Gamma_f \subset X \times Y$, which is defined to be the image of the map $(\id,f) \colon X \to X \times Y$. It follows that $\Hom(X,Y)$ injects into $\Sub(X \times Y)$, and is therefore a finite set.

(b) By a \defn{quotient} of $X$, we mean a surjection $X \to Y$. As we have seen, a quotient of $X$ is determined (up to isomorphism) by its kernel pair, which is a subobject of $X \times X$. Since there are finitely many such subobjects, we see that $X$ has finitely many quotients (up to isomorphism).

(c) The exact completion $\cE_{\rm ex/reg}$ is also finitely powered. To see this, first suppose that $X$ is an object of $\cE$ and $Y$ is a subobject of $X$ in $\cE_{\rm ex/reg}$. Then $Y$ is a quotient of an object $Y'$ from $\cE$, and thus the image of a map $f \colon Y' \to X$ in $\cE_{\rm ex/reg}$. Since $\cE \to \cE_{\rm ex/reg}$ is fully faithful and preserves images, we see that $f$ and its image come from $\cE$. Thus the subobjects of $X$ in $\cE_{\rm ex/reg}$ are the same as those in $\cE$, and in particular finite in number. If $X$ is now an arbitrary object of $\cE_{\rm ex/reg}$ then there is a surjection $X' \to X$, where $X'$ comes from $\cE$, and pull-back gives an injection $\Sub(X) \to \Sub(X')$. Thus $\Sub(X)$ is finite, as required.

\section{Oligomorphic groups and monoids}

In this section we review some material on oligomorphic groups and related objects.

\subsection{Oligomorphic groups} \label{ss:oliggp}

We begin by recalling some definitions:

\begin{definition}
An \defn{oligomorphic group} is a permutation group $(\fG, \Omega)$ such that $\fG$ has finitely many orbits on $\Omega^n$ for all $n \ge 0$.
\end{definition}

\begin{definition}
A \defn{pro-oligomorphic group}\footnote{In some references, these are called \defn{admissible groups}, but we prefer this more descriptive term.} is a topological group $\fG$ that is Hausdorff, non-archimedean (open subgroups form a neighborhood basis of the identity), and Roelcke-precompact (if $U$ and $V$ are open subgroups then $U \backslash G/V$ is finite).
\end{definition}

If $\fG$ is an oligomorphic group then it carries a pro-oligomorphic topology in which the basic open subgroups are pointwise stabilizers of finite subsets of $\Omega$ \cite[\S 2.2]{repst}. Conversely, if $\fG$ is pro-oligomorphic then for each open subgroup $U$ of $\fG$ the image of $\fG$ in the group of permutations of $\fG/U$ is oligomorphic, and $\fG$ is a dense subgroup of the inverse limit of these oligomorphic groups. For the purposes of this paper, there is no difference between a group and a dense subgroup, so all pro-oligomorphic groups can be taken to be an inverse limit of oligomorpic groups. We refer to \cite{CameronBook} for general background on oligomorphic groups.

Let $\fG$ be a pro-oligomorphic group. An action of $\fG$ on a set $X$ is \defn{smooth} if every point of $X$ has open stabilizer, and \defn{finitary} if $\fG$ has finitely many orbits on $X$. We write $\hat{\bS}(\fG)$ and $\bS(\fG)$ for the categories of smooth and finitary smooth sets. A finite product of (finitary) smooth sets is again (finitary) smooth; see \cite[\S 2.3]{repst}.

\subsection{Pre-Galois categories}

In \cite{repst}, we defined tensor categories associated to a pro-oligomorphic group $\fG$. In fact, the constructions of that paper are most naturally in terms of the category $\bS(\fG)$ rather than $\fG$ itself. We therefore set out to characterize these categories intrinsically. This led to the following notion:

\begin{definition} \label{defn:pregalois}
An essentially small category $\cC$ is \defn{pre-Galois} if the following hold:
\begin{enumerate}
\item $\cC$ has finite co-products (and thus an initial object $\bzero$).
\item Every object of $\cC$ is isomorphic to a finite co-product of \defn{atoms}, i.e., objects that do not decompose under co-product.
\item If $X$ is an atom and $Y$ and $Z$ are other objects, then any map $X \to Y \amalg Z$ factors uniquely through $Y$ or $Z$.
\item $\cC$ has fiber products and a final object $\bone$.
\item Any monomorphism of atoms is an isomorphism.
\item If $X \to Z$ and $Y \to Z$ are maps of atoms then $X \times_Z Y$ is non-empty (i.e., not $\bzero$).
\item The final object $\bone$ is atomic.
\item Equivalence relations in $\cC$ are effective. \qedhere
\end{enumerate}
\end{definition}

One easily sees that $\bS(\fG)$ is pre-Galois if $\fG$ is pro-oligomorphic. The following is the main theorem of \cite{pregalois}:

\begin{theorem}
A category is pre-Galois if and only if it is equivalent to $\bS(\fG)$ for some pro-oligomorphic group $\fG$.
\end{theorem}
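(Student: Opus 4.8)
The content of this theorem is entirely in the reverse direction, so I would handle the forward direction quickly and concentrate on manufacturing a group. For the forward implication I would check directly that $\bS(\fG)$ satisfies (a)--(h): the atoms are exactly the transitive smooth sets $\fG/U$ with $U$ open; coproduct is disjoint union, so (a)--(c) and (g) are immediate (the one-point set $\bone$ is transitive, hence atomic); (d) holds since finite limits of smooth sets are smooth; (e) holds because a monomorphism between transitive sets has transitive, hence full, image; (f) holds because the two maps to a common atomic target are surjective (the image of a transitive set is transitive, hence all of the target), so a fibre is nonempty; and (h) is the effectivity of orbit equivalence relations.

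So fix a pre-Galois category $\cC$. First I would record the calculus of atoms forced by the axioms. By (c) a morphism out of an atom factors through a single atomic summand of its target, its image there is a quotient of an atom and hence itself an atom (again by (c)), and being a sub-atom it is all of that summand by (e); thus every morphism out of an atom is a surjection onto an atom, and the full subcategory of atoms has only surjections. I would next show this category of atoms is cofiltered: given atoms $A,B$, any atomic summand of $A\times B$ surjects onto both factors, and by (b) there are only finitely many such summands; and a parallel pair of atoms can be coequalized by an atom mapping in, realized as a quotient using effectivity (h). I would then define the fibre functor as the filtered colimit $F(X)=\varinjlim_{A}\Hom(A,X)$ over the category of atoms, so that $F$ preserves finite coproducts and finite limits and is faithful and conservative; intuitively $F$ is the functor of points of the pro-atom $\varprojlim A$, the ``universal cover'' of $\cC$.

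Then I would set $\fG=\Aut(F)$, topologized so that the pointwise stabilizers of the finite sets $F(A)$ form a neighbourhood basis of the identity; this is automatically Hausdorff and non-archimedean, and it is Roelcke-precompact because $F(A)\times F(B)=F(A\times B)$ has finitely many $\fG$-orbits, in bijection with the finitely many atomic summands of $A\times B$ supplied by (b). Each $F(X)$ is then a finitary smooth $\fG$-set, giving a functor $\Phi\colon\cC\to\bS(\fG)$, $X\mapsto F(X)$. The heart of the argument, and the step I expect to be the main obstacle, is to show that $\fG$ acts transitively on $F(A)$ for every atom $A$: this is the analogue of the fundamental lemma of Grothendieck's Galois theory, and it is what forces atoms to land on transitive $\fG$-sets rather than on arbitrary $\fG$-sets. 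I would prove it by exhibiting a cofinal family of ``Galois'' atoms $P$, for which $\Aut_\cC(P)$ already acts transitively on $F(P)$, built as normal closures via the quotients permitted by (h), with nonemptiness of fibre products (f) guaranteeing that the required automorphisms exist; transitivity on a general fibre then descends from transitivity on a dominating Galois atom. Here atomicity of $\bone$, axiom (g), is exactly what ensures a single group $\fG$ rather than a groupoid, i.e. that the universal cover is connected.

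Granting transitivity on atomic fibres, the rest is formal. Full faithfulness of $\Phi$ reduces, via the atomic decomposition and (c), to the case of atoms: a morphism $A\to B$ is pinned down by the image of a chosen basepoint of $F(A)$, and every $\fG$-equivariant map $F(A)\to F(B)$ arises this way precisely because $\fG$ is transitive on $F(A)$ with stabilizers realized by the pro-atom system. Essential surjectivity follows because every open subgroup $U\le\fG$ is, by construction of the topology, the stabilizer of a point of some $F(A)$, so $\fG/U$ is an orbit in $F(A)$ and hence, by transitivity and (b), isomorphic to $F(A')$ for an atomic summand $A'$; finite coproducts then reach every finitary smooth $\fG$-set. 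Thus $\Phi$ is an equivalence $\cC\simeq\bS(\fG)$. The only genuinely hard input is the transitivity statement of the previous paragraph; once the fibre functor and its automorphism group are in hand, everything else is a consequence of the axioms.
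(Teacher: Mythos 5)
You should first note that the paper does not prove this theorem itself---it quotes it as the main theorem of \cite{pregalois}---but the mechanism behind it is recorded in \S\ref{ss:atomic}: one builds a \emph{universal homogeneous pro-object} $\Omega$ by Fra\"iss\'e's method and sets $\fG=\Aut(\Omega)$ and $F(X)=\Hom_{\Pro}(\Omega,X)$. Your construction of the fibre functor breaks before that point. You define $F(X)=\varinjlim_A \Hom(A,X)$ over the whole category of atoms and claim this category is cofiltered, with parallel pairs ``coequalized by an atom mapping in, realized as a quotient using (h).'' This is impossible: as you yourself establish, every morphism out of an atom is a surjection onto an atom, hence an epimorphism, so if $f\ne g\colon A\rightrightarrows B$ then $fh\ne gh$ for \emph{every} $h\colon C\to A$. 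Thus the category of atoms is cofiltered only in the degenerate case where all parallel hom-sets are singletons (already false for the infinite symmetric group, where $\Omega^{[2]}$ admits two maps to $\Omega$), the ``pro-atom $\varprojlim A$'' does not exist over this index category, and your colimit is not filtered, so none of the asserted properties of $F$ (preservation of finite limits, faithfulness arguments) follow. The cure is exactly Fra\"iss\'e theory: one must \emph{choose} a directed system of atoms with specified transition surjections that is universal and homogeneous; existence uses the amalgamation property supplied by axioms (b) and (f) together with a back-and-forth interleaving, and one must attend to cardinality ($\bN$-indexed chains when $\cC$ has countably many isomorphism classes, arbitrary directed sets or an ultraproduct trick otherwise, cf.\ \cite[\S 6.5]{pregalois}).

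Your plan for the crucial transitivity statement is also unworkable, not merely hard. In $\bS(\fG)$ one has $\Aut(\fG/U)\cong \rN_{\fG}(U)/U$, which embeds into the double coset space $U\backslash\fG/U$ and is therefore \emph{finite} by Roelcke precompactness, whereas the fibre of an atom is infinite whenever $\fG$ is; so no atom $P$ can have $\Aut_{\cC}(P)$ acting transitively on $F(P)$, and the cofinal family of ``Galois'' atoms built by normal closures simply does not exist. This is precisely where the oligomorphic theory departs from Grothendieck's Galois categories, whose fibres are finite: transitivity of $\fG$ on $\Hom_{\Pro}(\Omega,X)$ is not obtained by Galois closure but is the \emph{homogeneity} of the Fra\"iss\'e limit, proved by back-and-forth (equivalently, universality plus f-projectivity, \cite[Proposition~A.7]{homoten}). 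A subordinate gap of the same flavour occurs in your essential-surjectivity step: the claim that every open subgroup of $\fG$ is the stabilizer of a point of some $F(A)$ is false as stated, since such stabilizers only form a neighbourhood basis; an arbitrary open subgroup contains a point stabilizer with finite index, and realizing the corresponding transitive set requires forming quotients of atomic fibres by finite groups of automorphisms inside $\cC$---this is where axiom (h) genuinely enters (compare Proposition~\ref{prop:Gclass}), whereas your proposal never uses (h) except in the two steps that fail.
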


It is at times useful to consider a generalization of the above picture.

\begin{definition}
Let $\cC_0$ be a pre-Galois category. An \defn{order}\footnote{We used the term ``non-degenerate $\rB$-category'' for these categories in \cite{pregalois}.} in $\cC_0$ is a full subcategory $\cC$ that satisfying the following conditions:
\begin{enumerate}
\item $\cC$ contains the final object $\bone$
\item $\cC$ is closed under finite products, finite co-products, and passing to subobjects
\item Every object of $\cC_0$ is a quotient of an object in $\cC$. \qedhere
\end{enumerate}
\end{definition}

Note that an order is determined by the atoms it contains. If $X \to Z$ and $Y \to Z$ are maps in an order $\cC$ then the fiber product $X \times_Z Y$ (taken in $\cC_0$) is a subobject of $X \times Y$, and thus belongs to $\cC$; thus $\cC$ is closed under fiber products.

A \defn{stabilizer class} in a pro-oligomorphic group $\fG$ is a collection $\fU$ of open subgroups of $\fG$ satisfying the following conditions: (a) $\fU$ is closed under finite intersections; (b) $\fU$ is closed under conjugation; (c) $\fG$ belongs to $\fU$; (d) every open subgroup of $\fG$ contains a member of $\fU$ as a subgroup. Given a stabilizer class, we say that an action of $G$ on a set $X$ is \defn{$\fU$-smooth} if the stabilizer of every point is contained in $\fU$. We let $\bS(\fG; \fU)$ be the category of finitary smooth $\fU$-sets. One easily sees that this category is an order in $\bS(\fG)$, and this is the motivating example of the concept. In \cite{pregalois}, we also proved the following theorem:

\begin{theorem} \label{thm:order}
Let $\cC$ be a category. The following are equivalent:
\begin{enumerate}
\item $\cC$ satisfies axioms (a)--(g) of Definition~\ref{defn:pregalois}.
\item $\cC$ is an order in some pre-Galois category $\cC_0$.
\item $\cC$ is equivalent to $\bS(\fG; \fU)$ for some pro-oligomorphic group $\fG$ and stabilizer class $\fU$.
\end{enumerate}
\end{theorem}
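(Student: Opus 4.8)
The plan is to prove the implications (2)$\Rightarrow$(1), (1)$\Rightarrow$(2), (3)$\Rightarrow$(2), and (2)$\Rightarrow$(3), which together establish (1)$\iff$(2)$\iff$(3). The two implications touching (3) will be reduced to the main theorem of \cite{pregalois} identifying pre-Galois categories with the categories $\bS(\fG)$, so the genuine content lies in the passage between the intrinsic axioms (a)--(g) and the notion of an order, i.e.\ in (1)$\iff$(2). The implication (2)$\Rightarrow$(1) is a routine matter of inheritance: if $\cC$ is an order in a pre-Galois category $\cC_0$, then finite co-products and fiber products in $\cC$ are computed in $\cC_0$ (closure under these was built into the definition of an order and observed above), giving axioms (a) and (d); since $\cC$ is closed under subobjects and the co-product inclusions of atoms are monomorphisms, the atoms of $\cC_0$ lying in $\cC$ are precisely the atoms of $\cC$, and every object of $\cC$ decomposes into them, giving (b); the remaining axioms (c), (e), (f), (g) are statements about atoms and finite limits and therefore restrict from $\cC_0$ to $\cC$.

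For the implications involving (3): the implication (3)$\Rightarrow$(2) is immediate, since $\bS(\fG)$ is pre-Galois and $\bS(\fG; \fU)$ is an order in it, as observed just before the theorem. For (2)$\Rightarrow$(3), I would invoke the pre-Galois theorem to write $\cC_0 \simeq \bS(\fG)$ for a pro-oligomorphic group $\fG$. Under this equivalence the atoms of $\cC_0$ are the transitive smooth sets $\fG/U$, and I let $\fU$ be the collection of open subgroups $U$ with $\fG/U \in \cC$. The stabilizer-class axioms then follow from the order axioms: conjugation-invariance is automatic because $\fG/U \cong \fG/gUg^{-1}$; we have $\fG \in \fU$ because $\bone \in \cC$; closure under intersection follows from closure under products and subobjects, since $\fG/(U \cap V)$ occurs as an atom of $\fG/U \times \fG/V$; and the cofinality condition follows from the order axiom that every object of $\cC_0$ is a quotient of one in $\cC$, since a surjection $\fG/V \to \fG/U$ forces $V$ to be subconjugate to $U$. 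As an order is determined by the atoms it contains, this yields $\cC = \bS(\fG; \fU)$.

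The crux is (1)$\Rightarrow$(2). Here I would construct $\cC_0$ as an \emph{effective completion} of $\cC$, in direct analogy with the exact completion $\cE_{\rm ex/reg}$ of \S\ref{ss:exact}: objects are pairs $(X, R)$ with $X$ in $\cC$ and $R$ an equivalence relation on $X$, to be thought of as the quotient $X/R$, and morphisms $(X,R) \to (Y,S)$ are the functional relations in $\cC$ compatible with $R$ and $S$, composed as relations. One then checks that $X \mapsto (X, \Delta_X)$ is a fully faithful embedding under which the subobjects of $(X, \Delta_X)$ agree with those of $X$ in $\cC$ (as in the argument of \S\ref{ss:fp}(c)), that each $(X,R)$ is a quotient of $(X, \Delta_X)$, and hence that $\cC$ sits in $\cC_0$ as an order. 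Note that the missing axiom (h) is exactly what necessitates this completion: in an order the coequalizer of a kernel pair generally enlarges point-stabilizers and so escapes $\cC$, which is precisely why $\cC$ need not itself be effective, or even regular.

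The main obstacle is verifying that the category $\cC_0$ so constructed is pre-Galois, that is, checking the atom axioms (b), (c), (f), (g). Effectiveness (h) is built into the construction, and finite limits and co-products are inherited formally, but the decomposition of $(X,R)$ into atoms is delicate: writing $X = \coprod_i X_i$ as a co-product of atoms of $\cC$, the equivalence relation $R$ may glue distinct components together, so that the atoms of $(X,R)$ correspond to the blocks of the partition of $\{X_i\}$ generated by the relation ``$R$ meets $X_i \times X_j$.'' Showing this partition is well defined—in particular that the generated relation is transitive—requires the non-emptiness axiom (f) to supply the matching middle terms when composing $R$ with itself, and it is in this atom-by-atom bookkeeping that the full force of hypotheses (c)--(g) is actually used. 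Once the atomic structure of $\cC_0$ is under control, the remaining pre-Galois axioms follow without difficulty.
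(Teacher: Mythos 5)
The paper does not actually prove this theorem: it is quoted from \cite{pregalois}, where the argument runs through the Fra\"iss\'e-theoretic machinery summarized in \S\ref{ss:atomic} (one builds $\fG$ and $\fU$ directly from the category of atoms via a universal homogeneous pro-object, proving (a)$\Rightarrow$(c) head-on). Your alternative factoring --- (a)$\Rightarrow$(b) by a completion, then (b)$\Rightarrow$(c) by the pre-Galois theorem --- is legitimate in outline, and the remark following the theorem confirms your choice of $\cC_0$: it is the exact completion of $\cC$. Your implications (2)$\Rightarrow$(1), (3)$\Rightarrow$(2), and (2)$\Rightarrow$(3) are essentially correct (in (2)$\Rightarrow$(1), axiom (e) needs the small observation that a monomorphism in $\cC$ remains one in $\cC_0$, which follows since every object of $\cC_0$ is covered by an object of $\cC$).

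The genuine gap is in your crux direction (1)$\Rightarrow$(2), and it is flagged by your own parenthetical claim that $\cC$ ``need not itself be effective, or even regular.'' The second half of this is false --- orders \emph{are} regular, and the paper relies on this in \S\ref{ss:ww-setup} (``One easily sees that $\cE$ is a finitely-powered regular category''); what fails in an order is only exactness, as the $\fS_2$-example after Theorem~\ref{thm:order} shows (the kernel-pair coequalizer of an actual morphism is its image, which survives in the order; it is quotients by non-effective equivalence relations that escape). More importantly, your construction \emph{needs} regularity: composing functional relations requires image factorizations, and both the associativity of relation composition and the well-definedness of composition in your $(X,R)$-category require those factorizations to be stable under pullback --- exactly the calculus of \S\ref{s:regcat}. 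Deriving this from axioms (a)--(g) alone is the real work and is absent from your sketch: one must show images exist (the image of $f \colon T \to W$ is the union of the atoms of $W$ hit by $f$), that every map of atoms is a regular epimorphism (using (e) and (f)), and that these covers are pullback-stable (axiom (f) supplying the non-empty fibers, as you correctly anticipate in the transitivity discussion). Having denied regularity, your proposed $\cC_0$ has no well-defined composition as written. Relatedly, the claim that once the atom decomposition of $(X,R)$ is controlled ``the remaining pre-Galois axioms follow without difficulty'' undersells the task: axioms (c), (e), (f) must be re-proved for the new quotient atoms --- e.g., that a monomorphism $(X,R) \to (Y,S)$ of atoms is an isomorphism, and that fiber products of quotient atoms are non-empty, by descent along the regular epimorphisms $(X,\Delta_X) \to (X,R)$ --- and these verifications are comparable in weight to the bookkeeping you do flag.
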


\begin{remark}
The category $\cC_0$ in (b) is unique, up to equivalence. It can be obtained canonically from $\cC$ as its exact completion.
\end{remark}

\begin{example}
Let $\fS$ be the infinite symmetric group acting on $\Omega=\{1,2,3,\ldots\}$. This is the most basic example of an oligomorphic group. For an integer $n \ge 1$, let $\fS(n)$ be the subgroup of $\fS$ stabilizing each of $1, \ldots, n$. A subgroup of $\fS$ is open if and only if it contains some $\fS(n)$. In fact, every open subgroup of $\fS$ is conjugate to one of the form $H \times \fS(n)$, where $H$ is a subgroup of the finite symmetric group $\fS_n$ \cite[Proposition~15.1]{repst}.

The class $\fU$ of subgroups conjugate to some $\fS(n)$ is a stabilizer class. The transitive $\fU$-smooth sets are exactly the orbits appearing in $\Omega^n$ for $n \ge 0$. An example of a set that is smooth but not $\fU$-smooth is the set of 2-element subsets of $\Omega$; this is isomorphic to $\fS/U$, where $U=\fS_2 \times \fS(2)$. In the category $\bS(\fS; \fU)$, the equivalence relation on $\Omega^2$ induced by the $\fS_2$ action is not effective.
\end{example}

\subsection{Atomic categories} \label{ss:atomic}

Define the $\amalg$-envelope of a category $\cA$ to be the category whose objects are formal finite coproducts of objects in $\cA$; see \cite[\S 5.1]{pregalois}. If $\cC$ is an order in a pre-Galois category, then one easily sees that $\cC$ is the $\amalg$-envelope of its category of atoms. It is therefore a natural problem to determine which categories occur as categories of atoms in pre-Galois categories. We solved this problem in \cite{pregalois}.

To state the answer, we need to introduce some terminology. Let $\cA$ be a category. A \defn{final set} for $\cA$ is a set of objects $\{X_i\}_{i \in I}$ such that for each object $Y$ the set $\coprod_{i \in I} \Hom(Y, X_i)$ has cardinality one; that is, $Y$ maps to a unique $X_i$, and it maps to it uniquely. Let $X \to Z$ and $Y \to Z$ be morphisms in $\cA$. Consider the factor of objects $T$ equipped with morphisms $T \to X$ and $T \to Y$ making the square commutes. A final set for this category is called a \defn{fiber product set}.

\begin{definition}
An \defn{atomic category} is a category $\cA$ satisfying the following conditions:
\begin{enumerate}
\item The category $\cA$ has a final object.
\item Given maps $X \to Z$ and $Y \to Z$, there is a finite non-empty fiber product set.
\item Every monomorphism in $\cA$ is an isomorphism. \qedhere
\end{enumerate}
\end{definition}

The following is the main result we are after:

\begin{proposition}
If $\cC$ is an order in a pre-Galois category then its category of atoms is an atomic category. Conversely, if $\cA$ is an atomic category then its $\amalg$-envelope is an order in a pre-Galois category.
\end{proposition}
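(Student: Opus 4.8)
The plan is to prove both implications by working through the explicit axiomatic description of orders supplied by Theorem~\ref{thm:order}: a category is (equivalent to) an order in a pre-Galois category exactly when it satisfies axioms (a)--(g) of Definition~\ref{defn:pregalois}. Thus the forward direction amounts to extracting the three atomic-category axioms from (a)--(g), while the converse amounts to verifying (a)--(g) for the $\amalg$-envelope $\cC$ of an atomic category $\cA$. Throughout I would use freely the fact that in the $\amalg$-envelope one has $\Hom(\coprod_i A_i, \coprod_j B_j) = \prod_i \coprod_j \Hom_\cA(A_i, B_j)$, so that a morphism out of a single object of $\cA$ into a coproduct selects exactly one summand; this is precisely axiom (c) of Definition~\ref{defn:pregalois}.

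For the forward direction, let $\cC$ be an order with category of atoms $\cA$. Axiom (g) gives that $\bone$ is an atom, and since $\bone$ is final in $\cC$ it is final in the full subcategory $\cA$, establishing the final object. A morphism of atoms is a monomorphism in $\cC$ iff it is one in $\cA$ (testing against an arbitrary object reduces, via (b), to testing against its atomic summands), so axiom (e) yields that every monomorphism in $\cA$ is an isomorphism. The one substantive point is the fiber product set: given $X \to Z$ and $Y \to Z$ in $\cA$, I would form $X \times_Z Y$ in $\cC$ using (d), decompose it into atoms as $\coprod_m P_m$ using (b), and note it is non-empty by (f). Then $\{P_m\}$ is a fiber product set, since a commuting square out of an atom $T$ is, by the universal property in $\cC$, the same as a map $T \to \coprod_m P_m$, which by (c) factors uniquely through a unique $P_m$; finiteness and non-emptiness are built in, giving atomic axiom (2).

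For the converse, let $\cA$ be atomic and $\cC$ its $\amalg$-envelope. Axioms (a), (b) are immediate from the construction once one checks that each object of $\cA$ is indecomposable in $\cC$: if $A \cong U \amalg V$, then the identity, viewed as a map out of the atom $A$, lands in a single summand, forcing the complementary summand to be $\bzero$; hence the atoms of $\cC$ are exactly the objects of $\cA$. Axiom (c) is the defining Hom-formula, and axiom (g) holds because the final object of $\cA$ furnished by atomic axiom (1) is final in $\cC$ and is an atom. Axiom (e) follows from atomic axiom (3), since a monomorphism between atoms in $\cC$ restricts to a monomorphism in $\cA$. The heart of the argument is axiom (d). I would construct fiber products by reducing to atoms: given $f \colon U \to W$ and $g \colon V \to W$, each atomic summand $U_i$ maps into a single atomic summand $W_k$ of $W$, and likewise each $V_j$; I then claim $U \times_W V = \coprod U_i \times_{W_k} V_j$, the coproduct over those pairs whose structure maps hit the same atom $W_k$, where each $U_i \times_{W_k} V_j$ is defined as $\coprod_m P_m$ for a fiber product set $\{P_m\}$ from atomic axiom (2). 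Checking the universal property reduces, via (c), to an atomic test object, where it is exactly the final-set property defining the fiber product set; non-emptiness of these sets gives axiom (f).

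The main obstacle is axiom (d) in the converse: assembling the finitely many local fiber product sets into a genuine fiber product in $\cC$ and verifying its universal property against arbitrary (not merely atomic) test objects. Everything hinges on the morphism decomposition afforded by the Hom-formula/axiom (c), which lets one test the universal property one atom at a time; once this bookkeeping is in place the verification is formal, and the remaining axioms are routine consequences of the definitions.
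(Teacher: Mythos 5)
Your proof is correct, but it takes a different route from the paper: the paper disposes of the proposition in one line by citing \cite[Theorem~5.8]{pregalois}, after translating terminology (an atomic category is the same as the opposite of an $\rA$-category with an initial object and the amalgamation property, in the language of that reference). You instead deduce the statement from Theorem~\ref{thm:order} --- using the equivalence ``order in a pre-Galois category $\Leftrightarrow$ axioms (a)--(g)'' --- and carry out the axiom-matching by hand. Both arguments ultimately rest on \cite{pregalois}, just on different results there, so neither is circular within this paper; what yours buys is an explicit, self-contained dictionary between the three atomic-category axioms and (a)--(g), at the cost of length. Your key computation in the converse --- that the coproduct of a fiber product set in $\cA$ becomes a genuine fiber product in the $\amalg$-envelope, verified atom-by-atom via the Hom-formula/axiom (c) --- is in fact exactly \cite[Proposition~5.5]{pregalois}, which the paper itself quotes later in the proof of Proposition~\ref{prop:B-prod}; so you are reproving that proposition along the way, and your mechanism (testing universal properties one atomic summand at a time, with commutativity over $W$ forcing the two structure maps to hit the same atom $W_k$) is the right one. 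Two small points worth tightening in a written version: in showing objects of $\cA$ are indecomposable in the envelope, the argument that ``the complementary summand is $\bzero$'' should be run through the disjointness of $\Hom(B, \coprod_j C_j) = \coprod_j \Hom(B, C_j)$ applied to an atom $B$ of the putative complement (your sketch gestures at this correctly); and in the forward direction, for uniqueness in the final-set property one should note that any cone map $T \to P_m$, composed with $P_m \hookrightarrow X \times_Z Y$, recovers the canonical map $T \to X \times_Z Y$, so axiom (c) pins down both the index $m$ and the factorization.
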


\begin{proof}
This is essentially \cite[Theorem~5.8]{pregalois} $\cA$ is atomic if and only if (in the terminology of \cite{pregalois}) $\cA^{\op}$ is an $\rA$-category with an initial object and the amalgamation property.
\end{proof}

Let $\cA$ be an atomic category and let $\cC$ be its $\amalg$-envelope. By Theorem~\ref{thm:order}, $\cC$ is equivalent to $\bS(\fG; \fU)$ for some pro-olgiomorphic group $\fG$ and stabilizer class $\fU$. We now describe how to directly obtain $\fG$ and $\fU$, as this will be useful to us. We assume for simplicity that $\cA$ has countably many isomorphism classes.

Let $\Omega=(\Omega_i)_{i \in \bN}$ be a pro-object in $\cA$; see \cite[Appendix~A]{homoten} for background on pro-objects. We make a few definitions:
\begin{enumerate}
\item $\Omega$ is \defn{universal} if for every object $X$ of $\cA$ there is a map $\Omega \to X$.
\item $\Omega$ is \defn{homogeneous} if for every object $X$ the group $\Aut(\Omega)$ acts transitively on $\Hom(\Omega, X)$.
\item $\Omega$ is \defn{f-projective} if for morphisms $f \colon \Omega \to X$ and $h \colon Y \to X$, there exists a morphism $g \colon \Omega \to Y$ such that $f=h \circ g$; here $X$ and $Y$ are objects of $\cA$.
\end{enumerate}
See \cite[Appendix~A]{homoten} for general background on these concepts, though note that all arrows are reversed there. A universal pro-object is homogeneous if and only if it is f-projective by \cite[Proposition~A.7]{homoten}. If $\cA$ is an atomic category then there is a universal homogeneous pro-object $\Omega$, which is unique up to isomorphism; see \cite[Appendix~A]{homoten} and \cite[\S 6.3]{pregalois}.

Put $\fG=\Aut(\Omega)$. For an object $X$ of $\cA$, consider the $\fG$-set $P(X)=\Hom(\Omega, X)$. This set is non-empty (since $\Omega$ is universal) and transitive (since $\Omega$ is homogeneous). Let $\fU$ be the set of subgroups of $\fG$ that stabilizer a map $\Omega \to X$, for $X$ in $\cA$. We endow $\fG$ with the topology generated by $\fU$. With this topology, $\fG$ is pro-oligomorphic, $\fU$ is a stabilizer class, and the $P(X)$ are exactly the transitive smooth $\fG$-sets.

In general (without hypothesis on the number of isomorphism classes in $\cA$), the above claims still hold, though one must use pro-objects indexed by arbitrary directed sets. For instance, we know that $\cA$ is equivalent to a category of the form $\bS(\fG'; \fU')$, and one can take $\Omega$ to be the pro-object $(\fG'/U)$, as $U$ varies over the open subgroups in $\fU'$.

\subsection{Finite covers}

Let $X$ be a transitive smooth $\fG$-set. A \defn{finite cover} is a $\fG$-equivariant map $f \colon Y \to X$ where $Y$ is smooth and $f$ has finite fibers. We say $f$ is \defn{trivial} $Y$ is isomorphic to $X^{\amalg n}$ for some $n$, as a $\fG$-set over $X$. We say that $X$ is \defn{simply connected} if every finite cover is trivial. We require the following general observation:

\begin{proposition} \label{prop:triv-cover}
Suppose that $Y \to X$ is a finite cover, with $X$ transitive. Then there exists a map $X' \to X$, with $X'$ transitive and smooth, such that the base change $Y' \to X'$ is a trivial cover.
\end{proposition}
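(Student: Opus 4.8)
The plan is to reduce everything to the fiber of $f$ over a single basepoint together with the action of the stabilizer on that fiber, and then to take $X'$ to be the quotient of $\fG$ by the subgroup acting trivially on the fiber. Concretely, since $X$ is transitive and smooth I would fix a basepoint $x_0 \in X$, let $U$ be its (open) stabilizer so that $X \cong \fG/U$, and set $F = f^{-1}(x_0)$. As $f$ is a finite cover, $F$ is finite, and since $U$ fixes $x_0$ it permutes $F$, giving a homomorphism $U \to \mathrm{Sym}(F)$. The decisive point is that each $y \in F$ has open stabilizer $\fG_y$ in $\fG$ (smoothness of $Y$), so the kernel of this homomorphism, namely $K = \bigcap_{y \in F}(U \cap \fG_y)$, is a finite intersection of open subgroups and is therefore itself open in $\fG$.

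Next I would take $X' = \fG/K$ with its natural projection $X' \to X$; because $K$ is open, $X'$ is transitive and smooth. I then analyze the base change $Y' = Y \times_X X'$. Writing $x_0' = eK$ for the basepoint of $X'$, whose stabilizer is exactly $K$, the fiber of $Y' \to X'$ over $x_0'$ is canonically identified with $F$, and the residual action of $K$ on this fiber is just the restriction to $K$ of the $U$-action on $F$ — hence trivial, by the very definition of $K$. (Finiteness of the fibers of $Y' \to X'$ and smoothness of $Y'$ are inherited from $Y$ via the base change and the fact that a $\fG$-stable subset of a product of smooth sets is smooth.)

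Finally I would invoke the standard equivalence between smooth $\fG$-sets over the transitive set $\fG/K$ and smooth $K$-sets, sending a cover to its fiber over $x_0'$, with quasi-inverse $S \mapsto \fG \times_K S$. Under this dictionary a cover is \emph{trivial}, i.e.\ isomorphic to $(X')^{\amalg n}$ as a $\fG$-set over $X'$, precisely when its fiber $K$-set is trivial. Since $K$ acts trivially on $F$, this gives $Y' \cong (X')^{\amalg |F|}$ over $X'$, which is exactly the assertion.

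The one step that genuinely carries the argument is the openness of $K$: this is where smoothness of $Y$ is used in an essential way, and it is what guarantees that $X' = \fG/K$ is again a legitimate (transitive, smooth) object rather than a quotient by some non-open subgroup. Everything else — the identification of the fiber after base change, and the translation between triviality of the cover and triviality of the fiber as a $K$-set — is a routine application of the equivalence between smooth $\fG$-sets over $\fG/K$ and smooth $K$-sets, so I do not anticipate any real obstacle there.
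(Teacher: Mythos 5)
Your proof is correct and is essentially the paper's own argument: your subgroup $K$, the pointwise stabilizer of the fiber, coincides with the paper's choice $U'=\bigcap_{u\in U}uVu^{-1}$ (after its reduction to transitive $Y=\fG/V$, where the fiber over the basepoint is $U/V$ and the kernel of the $U$-action on it is exactly that normal core), so both proofs take $X'=\fG/K$ for the same open subgroup. Your packaging via the kernel of $U\to\mathrm{Sym}(F)$ even handles non-transitive $Y$ in one stroke, quietly absorbing the paper's ``it suffices to treat $Y$ transitive'' reduction.
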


\begin{proof}
It suffices to treat the case where $Y$ itself is transitive. Write $X=\fG/U$ and $Y=\fG/V$ where $V \subset U$ is a finite index containment of open subgroups. Let $U'$ be the intersection of all conjugates of $V$ in $U$. We can then take $X'=\fG/U'$.
\end{proof}

\subsection{Oligomorphic monoids} \label{ss:monoid}

We now extend the pro-oligomorphic concept to monoids. For this, we require a notion of non-archimedean monoid. This can be defined in topological terms\footnote{See \cite{MS} for this approach.}; however, since we only use the topology to speak of smooth sets, it will be more convenient to work directly with this concept:

\begin{definition} \label{defn:nonarch}
A \defn{non-archimedean structure} on a monoid $\fM$ is a class $\hat{\bS}(\fM)$ of $\fM$-sets, called the \defn{smooth} $\fM$-sets, satisfying the following conditions:
\begin{enumerate}
\item A subquotient of a smooth $\fM$-set is smooth.
\item If $X$ is an $\fM$-set and $\{Y_i\}_{i \in I}$ is a family of smooth $\fM$-subsets then $\bigcup_{i \in I} Y_i$ is smooth.
\item A finite product of smooth $\fM$-sets is smooth.
\end{enumerate}
A \defn{non-archimedean monoid} is a monoid equipped with a non-archimedean structure. We say that such a monoid $\fM$ is is \defn{Hausdorff} if the following additional condition holds:
\begin{enumerate}
\setcounter{enumi}{3}
\item Given $\sigma \ne \tau$ in $\fM$, there exists a smooth $\fM$-set $X$ and $x \in X$ such that $\sigma x \ne \tau x$. \qedhere
\end{enumerate}
\end{definition}

Given a monoid $\fM$ and some class $\Sigma$ of $\fM$-sets, there is a minimal non-archimedean structure $\hat{\bS}(\fM)$ containing $\Sigma$; we say that $\Sigma$ \defn{generates} $\hat{\bS}(\fM)$. If $f \colon \fM \to \fM'$ is a monoid homomorphism and we have a non-archimedean structure on $\fM'$, then there is an induced structure on $\fM$, namely, the one generated by restrictions of smooth $\fM'$-sets. If $\fM'$ is Hausdorff and $f$ is injective then the induced structure on $\fM$ is Hausdorff. Given a non-archimedean monoid $\fM$, we write $\bS(\fM)$ for the class of finitary smooth $\fM$-sets, where an $\fM$-set $X$ is finitary if there exist $x_1, \ldots, x_n$ such that $X=\bigcup_{i=1}^n \fM x_i$.

For a monoid $\fM$, we let $\fM^{\circ}$ denote the group of invertible elements in $\fM$.

\begin{definition}
A \defn{pro-oligomorphic monoid} is a Hausdorff non-archimedean monoid $\fM$ such that $\fM^{\circ}$ is pro-oligomorphic (with the induced non-archimedean structure) and finitary smooth $\fM$-sets restrict to finitary $\fM^{\circ}$-sets.
\end{definition}

This definition is very restrictive, since it requires $\fM$ to contain a very large group. However, it is exactly what we will require in this paper. See \cite{MP} for a much less restrictive notion of oligomorphic monoid.

\subsection{Oligomorphic categories}

Let $\fM$ be a category with a finite set of objects $\Lambda$. An \defn{$\fM$-set} is a functor $\fM \to \Set$. Although we use the language of categories, we are actually thinking in more concrete terms. An $\fM$-set $\fZ$ is just a finite collection of sets $(\fZ_{\lambda})_{\lambda \in \Lambda}$, together with action maps
\begin{displaymath}
\Hom_{\fM}(\lambda, \mu) \times \fZ_{\lambda} \to \fZ_{\mu}
\end{displaymath}
satisfying some conditions. Thus $\fM$-sets are just a mild generalization of a set equipped with an action of a monoid.

We define a non-archimedean structure on $\fM$ in the same way as the monoid case: one gives a collection $\hat{\bS}(\fM)$ of $\fM$-sets as in Definition~\ref{defn:nonarch}. We note that a non-archimedean structure on $\fM$ induces one on $\End(\lambda)$ and $\Aut(\lambda)$ for all $\lambda \in \Lambda$. We again have a notion of finitary set, and write $\bS(\fM)$ for the class of finitary smooth $\fM$-sets. We will require the following mild generalization of pro-oligomorphic monoid:

\begin{definition}
A \defn{pro-oligomorphic category} is a Hausdorff non-archimedean category $\fM$ with the following properties:
\begin{enumerate}
\item $\fM$ has finitely many objects.
\item $\fG_{\lambda}=\Aut(\lambda)$ is a pro-oligomorphic, for all objects $\lambda$.
\item If $\fZ$ is a finitary smooth $\fM$-set then $\fZ_{\lambda}$ is a finitary $\fG_{\lambda}$-set for all $\lambda$. \qedhere
\end{enumerate}
\end{definition}

\section{The first embedding theorem} \label{s:embed1}

The purpose of this section is to prove a statement like Theorem~\ref{mainthm1}(b), which connects the surjection category of a finitely-powered regular category to oligomorphic groups.

\subsection{The surjection category}

Fix a finitely-powered regular category $\cE$ throughout \S \ref{s:embed1}. Let $\{E_{\lambda}\}_{\lambda \in \Lambda}$ be the subobjects of the final object $\bone$ in $\cE$, where $\Lambda$ is a finite index set. We say that an object $X$ of $\cE$ has \defn{type $\lambda$} if the image of the natural map $X \to \bone$ is $E_{\lambda}$. Note that if $Y \to X$ is a surjective map then $X$ and $Y$ have the same type.

Let $\cE^s$ be the category with the same ojects as $\cE$, and whose morphisms are surjections. For $\lambda \in \Lambda$, let $\cE^s_{\lambda}$ be the full subcategory of $\cE^s$ spanned by objects of type $\lambda$. By the above comments, there are no morphisms in $\cE^s$ between objects of different types, and so $\cE^s$ is the disjoint union of the categories $\cE^s_{\lambda}$ for $\lambda \in \Lambda$.

\subsection{Construction of groups}

We now construct some pro-oligomorphic groups associated to $\cE$. Throughout our discussion, all fiber products are taken in $\cE$. We say that a subobject of a (fiber) product is \defn{ample} if it surjects onto each factor.

\begin{proposition}
The category $\cE^s_{\lambda}$ is atomic.
\end{proposition}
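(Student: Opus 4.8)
The plan is to verify the three defining conditions of an atomic category for $\cA=\cE^s_{\lambda}$, leaning throughout on the standard properties of surjections recalled in \S\ref{s:regcat}: image factorizations, stability of surjections under base change (Definition~\ref{defn:regcat}(c)), the facts that a composite $g\circ f$ being surjective forces $g$ surjective (since $\im(g\circ f)\subset\im(g)$), and that a morphism which is simultaneously injective and surjective is an isomorphism.

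First I would produce the final object. The image factorization of the canonical map $X\to\bone$ of an object of type $\lambda$ is $X\twoheadrightarrow E_{\lambda}\hookrightarrow\bone$, so $E_{\lambda}$ itself has type $\lambda$ and every $X$ admits a surjection $X\to E_{\lambda}$. This surjection is the unique morphism $X\to E_{\lambda}$ in $\cE$, because $E_{\lambda}\hookrightarrow\bone$ is injective and $\bone$ is final; hence $E_{\lambda}$ is terminal in $\cE^s_{\lambda}$.

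The heart of the argument is the fiber product set condition, and this is the step I expect to be the main obstacle. Given surjections $X\to Z$ and $Y\to Z$, I would form $P=X\times_Z Y$ in $\cE$; since a base change of a surjection is a surjection, both projections $P\to X$ and $P\to Y$ are surjective, i.e. $P$ is ample. I claim the set of ample subobjects of $P$ is a fiber product set. Each ample subobject $W\subset P$ is an object of the relevant cone category, its two projections $W\to X$ and $W\to Y$ being surjections that agree over $Z$; this set is finite because $\cE$ is finitely-powered, and it is non-empty because $P$ itself is ample. For the universal property, a cone $(T\to X,\,T\to Y)$ over $Z$ in $\cE^s_{\lambda}$ induces a unique $\phi\colon T\to P$, whose image factorization $T\twoheadrightarrow W\hookrightarrow P$ exhibits $W$ as ample (surjectivity of $T\to X$ forces $W\to X$ surjective, and likewise for $Y$) and furnishes a cone morphism $T\to W$. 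Conversely, any cone morphism $T\to W'$ composed with $W'\hookrightarrow P$ must equal $\phi$ by the uniqueness in the universal property of $P$, whence $W'=\im(\phi)=W$ and the morphism $T\to W$ is itself forced; thus $\coprod_{W}\Hom(T,W)$ has a single element, as required.

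Finally, to see that every monomorphism of $\cE^s_{\lambda}$ is an isomorphism, let $f\colon X\to Y$ be such a morphism and consider its kernel pair $R\rightrightarrows X$. The two projections $p_1,p_2\colon R\to X$ are base changes of $f$, hence surjections, and they are equalized by $f$. Since $f$ is monic against surjections, $p_1=p_2$, which forces $R=\Delta_X$; thus $f$ is injective in $\cE$, and being both injective and surjective it is an isomorphism. The genuinely delicate point here, as in the fiber product step, is that a monomorphism in $\cE^s_{\lambda}$ only permits cancellation against \emph{surjections}; the resolution in both cases is that the comparison maps (the kernel pair projections, or the two legs of a cone) are themselves surjective because they arise as base changes, so they already lie in $\cE^s_{\lambda}$ and the weaker cancellation property suffices.
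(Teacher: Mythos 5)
Your proof is correct and follows essentially the same route as the paper: the final object is $E_{\lambda}$, the ample subobjects of $X\times_Z Y$ form the fiber product set via the image factorization of the induced map $T\to X\times_Z Y$, and monomorphisms are handled by noting that the kernel pair projections are surjective base changes, so cancellation against surjections suffices. The extra details you supply (uniqueness of the map to $E_{\lambda}$, and the uniqueness clause in the final-set condition, using that cone morphisms are themselves surjections so $W'=\im(\phi)$) are points the paper leaves implicit, and they check out.
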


\begin{proof}
(a) The final object of $\cE_{\lambda}^s$ is $E_{\lambda}$.

(b) Consider maps $X \to Z$ and $Y \to Z$ in $\cE^s_{\lambda}$. We claim that the ample subobjects of $X \times_Z Y$ are a fiber product set in $\cE^s_{\lambda}$. Indeed, let $T \to X$ and $T \to Y$ be maps in $\cE^s_{\lambda}$ such that the natural square commutes. Then we obtain a canonical map $f \colon T \to X \times_Y Z$ in $\cE$. We have a unique (up to isomorphism) factorization of $f$ as
\begin{displaymath}
\xymatrix{
T \ar[r]^-g & W \ar[r]^-i & X \times_Z Y }
\end{displaymath}
where $g$ is surjective and $i$ is injective. Since $T$ surjects onto $X$ and $Y$, it follows that $W$ is an ample subobject. Since $g$ is a morphism in $\cE^s_{\lambda}$, the claim follows. The set of ample subobjects is finite (since $\cE$ is finitely-powered) and non-empty (it contains the fiber product).

(c) Let $f \colon X \to Y$ be a monomorphism in $\cE^s_{\lambda}$. Let $p_i \colon X \times_Y X \to X$, for $i=1,2$, be the projections. These are surjections, as they are obtained from $f$ by base change, and so they are morphisms in $\cE^s_{\lambda}$. Since $fp_1=fp_2$ and $f$ is a monomorphism in $\cE^s_{\lambda}$, we have $p_1=p_2$. It follows that $f$ is a monomorphism in $\cE$. Thus $f$ is an isomorphism in $\cE$, and therefore in $\cE^s_{\lambda}$ as well.
\end{proof}

We now apply the theory from \S \ref{ss:atomic}. Let $\Omega_{\lambda}$ be the universal homogeneous pro-object in $\cE^s_{\lambda}$, and let $\fG_{\lambda}=\Aut(\Omega_{\lambda})$. Given an object $X$ of type $\lambda$, we let
\begin{displaymath}
\fB(X) = \Surj_{\Pro(\cE)}(\Omega_{\lambda}, X)
\end{displaymath}
be the associated transitive $\fG_{\lambda}$-set. The $\fB(X)$'s define a non-archimedean topology on $\fG_{\lambda}$, and make $\fG_{\lambda}$ into a pro-oligomorphic group. We note that, by construction, we have a fully faithful functor
\begin{displaymath}
\cE^s_{\lambda} \to \bS(\fG_{\lambda}), \qquad X \mapsto \fB(X).
\end{displaymath}
We record the following useful piece of information:

\begin{proposition} \label{prop:B-prod}
Consider morphisms $X \to Z$ and $Y \to Z$ in $\cE^s_{\lambda}$. Then we have a natural isomorphism of $\fG_{\lambda}$-sets
\begin{displaymath}
\fB(X) \times_{\fB(Z)} \fB(Y) = \coprod \fB(W),
\end{displaymath}
where the union is taken over ample subobjects $W$ of $X \times_Z Y$.
\end{proposition}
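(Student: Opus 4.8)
The plan is to produce an explicit $\fG_{\lambda}$-equivariant bijection and to check that it is well-defined on both sides. First I would unwind the left-hand side. Recall that $\fB(X)=\Surj_{\Pro(\cE)}(\Omega_{\lambda}, X)$ and that the structure map $\fB(X) \to \fB(Z)$ induced by the surjection $X \to Z$ is post-composition (and similarly for $Y$). Thus an element of $\fB(X) \times_{\fB(Z)} \fB(Y)$ is a pair of surjections $f \colon \Omega_{\lambda} \to X$ and $g \colon \Omega_{\lambda} \to Y$ in $\Pro(\cE)$ whose composites to $Z$ coincide. By the universal property of the fiber product $X \times_Z Y$ (taken in $\cE$), such a pair determines a single map $h \colon \Omega_{\lambda} \to X \times_Z Y$ with $\mathrm{pr}_X \circ h = f$ and $\mathrm{pr}_Y \circ h = g$, and conversely $h$ recovers the pair.

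Next I would pass to the image. Since $X \times_Z Y$ is an object of $\cE$, the map $h$ is represented by an honest morphism $\Omega_{\lambda, i} \to X \times_Z Y$ in $\cE$ for some index $i$, which factors as $\Omega_{\lambda, i} \twoheadrightarrow W \hookrightarrow X \times_Z Y$ with $W=\im(h)$ a subobject of $X \times_Z Y$. Because the transition maps of the pro-object $\Omega_{\lambda}$ are surjective, the induced map $\Omega_{\lambda} \to W$ is a surjection in $\Pro(\cE)$; moreover composing with a surjection on the source does not change the image in a regular category, so $W$ and the surjection $\Omega_{\lambda} \twoheadrightarrow W$ are independent of the chosen representative. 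Since $f=\mathrm{pr}_X \circ h$ is surjective and factors through $W \to X$, the latter is surjective, and similarly $W \to Y$ is surjective; hence $W$ is an ample subobject of $X \times_Z Y$, and in particular $W$ has type $\lambda$ (surjections preserve type), so $\fB(W)$ is defined and $\Omega_{\lambda} \twoheadrightarrow W$ lies in it. Sending $(f,g)$ to this surjection, placed in the component indexed by $W$, defines the candidate map $\fB(X) \times_{\fB(Z)} \fB(Y) \to \coprod_W \fB(W)$.

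Then I would construct the inverse and verify equivariance. Given an ample subobject $W$ and a surjection $\phi \colon \Omega_{\lambda} \twoheadrightarrow W$, the composites $(W \to X) \circ \phi$ and $(W \to Y) \circ \phi$ are surjections by ampleness, and their composites to $Z$ agree since both factor through $W \hookrightarrow X \times_Z Y \to Z$; this yields an element of the left-hand side. That the two constructions are mutually inverse follows from the uniqueness (up to isomorphism) of the surjection/injection factorization: starting from $\phi$, the induced map $h=(W \hookrightarrow X \times_Z Y) \circ \phi$ has image exactly $W$ with surjective part $\phi$, and conversely. Finally, $\fG_{\lambda}=\Aut(\Omega_{\lambda})$ acts by precomposition, and for $\sigma \in \fG_{\lambda}$ the pair $(f \circ \sigma, g \circ \sigma)$ induces $h \circ \sigma$, whose image is again $W$ with surjective part $\phi \circ \sigma$; thus the bijection is $\fG_{\lambda}$-equivariant, and it is patently natural in the data $X \to Z \leftarrow Y$.

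I expect the only real subtlety to be the bookkeeping with pro-objects: ensuring the image factorization of $h$ makes sense for a map out of the pro-object $\Omega_{\lambda}$, and that the resulting $W$ together with the surjection $\Omega_{\lambda} \twoheadrightarrow W$ does not depend on the representing index $i$. This rests on the transition maps of $\Omega_{\lambda}$ being surjective, so that precomposition preserves both surjectivity and images; this holds because $\Omega_{\lambda}$ is a pro-object of $\cE^s_{\lambda}$, whose morphisms are surjections. Everything else — ampleness of the image, the matching of the composites to $Z$, and the equivariance — is a direct consequence of the universal property of the fiber product and the preservation of image factorizations, and so should be routine.
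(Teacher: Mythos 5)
Your proof is correct, but it takes a genuinely different route from the one the paper uses. The paper's proof is a one-line citation: the ample subobjects of $X \times_Z Y$ were already shown to form a fiber product set in the atomic category $\cE^s_{\lambda}$ (in the proof that $\cE^s_{\lambda}$ is atomic), and \cite[Proposition~5.5]{pregalois} states that the coproduct of a fiber product set in an atomic category becomes the honest fiber product in the associated pre-Galois category; transporting this through $X \mapsto \fB(X)$ gives the displayed isomorphism. The paper then remarks that one can instead argue directly "using the homogeneous property of $\Omega_{\lambda}$," and your argument is essentially that direct route carried out in full --- with the notable feature that you never actually invoke homogeneity: the universal property of the pullback (which passes to $\Hom_{\Pro(\cE)}(\Omega_{\lambda}, -)$ because filtered colimits of sets commute with finite limits --- the one step worth stating explicitly, since a pair of maps out of a pro-object with eventually-equal composites to $Z$ must be assembled at a common deep index) together with the uniqueness and surjection-stability of image factorizations in a regular category already yields the equivariant bijection. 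Your pro-object bookkeeping is sound: surjectivity of the transition maps makes $W = \im(h)$ and the surjection $\Omega_{\lambda} \twoheadrightarrow W$ independent of the representing index, and the step "composite surjective implies second factor surjective" is valid in any regular category since $\im(g \circ f) \subset \im(g)$. What the paper's citation buys is brevity and a conceptual placement of the statement within the general atomic/pre-Galois machinery; what your argument buys is self-containedness, and it isolates exactly which structure is needed (regularity and surjective transition maps, not homogeneity). One cosmetic point: $\fG_{\lambda}$ acts on $\fB(X)$ on the left, so "precomposition" should be by $\sigma^{-1}$; this changes nothing in your equivariance check.
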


\begin{proof}
It is a general fact that the coproduct of the fiber product set in an atomic category becomes the actual fiber product in the associated pre-Galois category \cite[Proposition~5.5]{pregalois}. One can also see this isomorphism more directly using the homogeneous property of $\Omega_{\lambda}$.
\end{proof}

\begin{remark}
Consider the exact completion $\cE_{\rm ex/reg}$ of $\cE$ (\S \ref{ss:exact}). The subobjects of $\bone$ in $\cE$ and $\cE_{\rm ex/reg}$ are the same (see \S \ref{ss:fp}(c)). Moreover, $\cE_{\lambda}^s$ is cofinal in $(\cE_{\rm ex/reg})_{\lambda}^s$, since every object of $\cE_{\rm ex/reg}$ is a quotient of an object from $\cE$. It follows that $\Omega_{\lambda}$ is a universal homogeneous object for $(\cE_{\rm ex/reg})_{\lambda}^s$. We thus see that $\cE$ and $\cE_{\rm ex/reg}$ lead to the same system $(\fG_{\lambda})_{\lambda \in \Lambda}$ of pro-oligomorphic groups.
\end{remark}

\subsection{Classification of transitive sets} \label{ss:trans}

Given an oligomorphic group constructed as the automorphism group of a Fra\"iss\'e limit, it is generally an important (and sometimes difficult) problem to classify all open subgroups. We now solve this problem for the groups associated to $\cE$. We assume that $\cE$ is exact in \S \ref{ss:trans}.

\begin{proposition} \label{prop:Gclass}
Every transitive smooth $\fG_{\lambda}$-set has the form $\fB(X)/\Gamma$ for some $X \in \cE^s_{\lambda}$ and $\Gamma \subset \Aut(X)$.
\end{proposition}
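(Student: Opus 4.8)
The plan is to translate the statement into the language of open subgroups and then to produce the required object by a saturation argument driven by finite-poweredness. Throughout I keep $\cE$ exact, as assumed in \S\ref{ss:trans}.

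First I would fix a transitive smooth $\fG_\lambda$-set $Z$ with a base point $z$, and set $U=\mathrm{Stab}_{\fG_\lambda}(z)$, an open subgroup. Since $\fB$ embeds $\cE^s_\lambda$ into $\bS(\fG_\lambda)$ and $\fG_\lambda$ acts transitively on each $\fB(X)$ by homogeneity of $\Omega_\lambda$ (\S\ref{ss:atomic}), the observation that makes the proof run is the following: $\Aut(X)$ acts on $\fB(X)=\Surj_{\Pro(\cE)}(\Omega_\lambda,X)$ by post-composition, this action is faithful (because $\phi$ is a surjection) and commutes with the $\fG_\lambda$-action, and homogeneity makes the induced map $N_{\fG_\lambda}(\mathrm{Stab}\,\phi)/\mathrm{Stab}\,\phi\to\Aut(X)$ an isomorphism. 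Consequently the transitive quotients $\fB(X)/\Gamma$ are exactly the sets $\fG_\lambda/U'$ for which there is a surjection $\phi\colon\Omega_\lambda\to X$ onto an object of $\cE^s_\lambda$ with $\mathrm{Stab}(\phi)\subseteq U'\subseteq N_{\fG_\lambda}(\mathrm{Stab}\,\phi)$. Thus it suffices to find a single such $\phi$ with $\mathrm{Stab}(\phi)\subseteq U\subseteq N_{\fG_\lambda}(\mathrm{Stab}\,\phi)$; then $\Gamma\colon=U/\mathrm{Stab}(\phi)$ embeds in $\Aut(X)$ and $Z\cong\fB(X)/\Gamma$.

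To locate such a $\phi$, I would start from any $\phi_0\colon\Omega_\lambda\to X_0$ in $\cE^s_\lambda$ with $\mathrm{Stab}(\phi_0)\subseteq U$ (these exist since $U$ is open), yielding a surjection of $\fG_\lambda$-sets $\fB(X_0)\to Z$. Its kernel pair is a $\fG_\lambda$-invariant equivalence relation $E\subseteq\fB(X_0)\times\fB(X_0)$, and by Proposition~\ref{prop:B-prod} applied over $\bone$ we may write $\fB(X_0)\times\fB(X_0)=\coprod_W \fB(W)$ as $W$ ranges over the ample subobjects of $X_0\times X_0$; hence $E=\coprod_{W\in\cR}\fB(W)$ for a subset $\cR$ of ample relations on $X_0$. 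The crucial point is that $\cR$ is \emph{finite}, because $\cE$ is finitely-powered, and that reflexivity, symmetry and transitivity of $E$ translate into closure of $\cR$ under the diagonal, transpose and composition of relations. In short, the a priori infinite gluing data for $Z$ is encoded by a finite relational structure $\cR$ on $X_0$.

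The heart of the proof, and the step I expect to be the main obstacle, is to convert the finite datum $\cR$ into an honest object $X$ of $\cE^s_\lambda$ together with a subgroup $\Gamma\subseteq\Aut(X)$. The idea is to split $\cR$ into a ``functional'' part and an ``automorphism'' part: the members of $\cR$ that are not graphs of isomorphisms should be absorbed into a single quotient $q\colon X_0\twoheadrightarrow X$ in $\cE$ (here exactness and finite-poweredness enter, ensuring the relevant congruence is effective and defines a genuine object of $\cE$, not merely a pro-object), while the remaining relations become graphs of automorphisms of $X$, assembling into a subgroup $\Gamma\le\Aut(X)$. I would then set $\phi=q\circ\phi_0$ and verify that this arranges exactly $\mathrm{Stab}(\phi)\subseteq U\subseteq N_{\fG_\lambda}(\mathrm{Stab}\,\phi)$ with $U/\mathrm{Stab}(\phi)=\Gamma$. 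When every relation in $\cR$ is already a graph of an isomorphism of $X_0$ (for instance when $\Aut(X_0)$ is finite, so the fibre of $\fB(X_0)\to Z$ is finite) one may take $X=X_0$ and the argument is immediate; the difficulty is purely in the general case. Checking that the decomposition of $\cR$ exists and is forced by the equivalence-relation axioms is the delicate part: the example $\cE=\mathrm{FinSet}^{\op}$, where $Z=\fB([1])$ arises as a quotient of $\fB([2])$ with $X=[1]$ a \emph{proper} quotient of $X_0=[2]$, shows that $X$ need not equal $X_0$ and that the target must be allowed to change. This is precisely the ``upper-triangular change of basis'' phenomenon highlighted in the introduction.
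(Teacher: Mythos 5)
Your setup is the same as the paper's: reduce to classifying $\fG_{\lambda}$-invariant equivalence relations on some $\fB(X_0)$, and encode such a relation, via Proposition~\ref{prop:B-prod}, as a finite subset $\cR$ of the ample subobjects of $X_0 \times X_0$ (this is the content of Lemma~\ref{lem:Gclass-1}). Your group-theoretic reformulation via $N_{\fG_{\lambda}}(\mathrm{Stab}\,\phi)/\mathrm{Stab}\,\phi \cong \Aut(X)$ is also correct, though it rests on full faithfulness of the functor $\fB$ rather than homogeneity alone; it is in effect Corollary~\ref{cor:Gclass}, which the paper deduces \emph{from} the proposition. The genuine gap is that the step you yourself flag as ``the delicate part'' --- converting $\cR$ into a quotient $q \colon X_0 \to X$ together with a subgroup $\Gamma \subseteq \Aut(X)$ --- is the entire mathematical content of the proposition, and you give no argument for it; ``split $\cR$ into a functional part and an automorphism part'' is a restatement of the goal, not a proof. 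The paper fills this hole with three ideas you would need to supply: (i) take $R$ maximal among the members of $\cR$ containing $\Delta_{X_0}$; the maximality trick $R \circ S \supseteq R, S$ for any reflexive $S \in \cR$ shows $R$ is an equivalence relation on $X_0$ absorbing every reflexive member (Lemma~\ref{lem:Gclass-3}(a),(b)), and exactness makes $X_0/R$ an honest object; (ii) when $\Delta_{X_0}$ is the \emph{only} reflexive member, the computation that $A \circ A^t$ contains $\Delta_{X_0}$ with equality iff $p_2 \colon A \to X_0$ is an isomorphism (Lemma~\ref{lem:Gclass-2}) forces every $A \in \cR$ to be the graph of an automorphism, so $\cR$ assembles into a subgroup $\Gamma$; (iii) rather than your one-shot splitting, a minimality choice of $X_0$ among its finitely many quotients (\S\ref{ss:fp}(b)) rules out the first branch of the resulting dichotomy, since $R \supsetneq \Delta_{X_0}$ would make the map to $Z$ factor through $\fB(X_0/R)$.

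There is also a concrete inaccuracy that would bite if you tried to complete the argument: transitivity of $E$ does \emph{not} translate into closure of $\cR$ under composition. Since $\cR$ is not downward closed (when $E = \fR_{\Gamma}$, for instance, $\cR$ consists only of graphs), the correct translation is the stronger condition of Lemma~\ref{lem:Gclass-1}(c): for any subobject $W$ of $X_0 \times X_0 \times X_0$ with $p_{12}(W), p_{23}(W) \in \cR$, also $p_{13}(W) \in \cR$. Composition-closure is a consequence but is strictly weaker, because the image of a pair $(f,h)$ witnessing transitivity is in general only an ample subobject of $B \circ A$, not $B \circ A$ itself. This stronger form is used essentially in the paper's Lemma~\ref{lem:Gclass-3}(c), to show that $\cR$ contains \emph{every} ample subobject of the maximal reflexive element $R$ --- exactly what guarantees $E$ contains $\fB(X_0) \times_{\fB(X_0/R)} \fB(X_0)$ and hence that the map descends. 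So while your proposal frames the problem correctly and identifies the right target structure (including the correct observation that $X$ must be allowed to be a proper quotient of $X_0$), it stops precisely where the proof begins.
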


As a corollary, we obtain a classification of the open subgroups of $\fG_{\lambda}$. For an object $X$ of type $\lambda$, we let $\fG_{\lambda}(X)$ be the stabilizer in $X$ of a chosen surjection $\Omega_{\lambda} \to X$.

\begin{corollary} \label{cor:Gclass}
Let $U$ be an open subgroup of $\fG_{\lambda}$. Then there is some object $X$ of type $\lambda$ such that, after replacing $U$ by a conjugate subgroup, we have $\fG_{\lambda}(X) \subset U \subset \rN\fG_{\lambda}(X)$.
\end{corollary}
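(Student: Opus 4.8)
The plan is to deduce the corollary from Proposition~\ref{prop:Gclass} by tracking point stabilizers. Since $U$ is open, the coset space $\fG_\lambda/U$ is a transitive smooth $\fG_\lambda$-set, so by Proposition~\ref{prop:Gclass} there is an object $X$ of type $\lambda$ and a subgroup $\Gamma \subset \Aut(X)$ together with an isomorphism $\fG_\lambda/U \cong \fB(X)/\Gamma$ of $\fG_\lambda$-sets. The structural observation I would record first is that $\Aut(X)$ acts on $\fB(X)=\Surj_{\Pro(\cE)}(\Omega_\lambda,X)$ by post-composition, $\gamma\cdot f=\gamma\circ f$, and that this action commutes with the action of $\fG_\lambda=\Aut(\Omega_\lambda)$ by pre-composition; the quotient $\fB(X)/\Gamma$ appearing in Proposition~\ref{prop:Gclass} is precisely the quotient by the restriction to $\Gamma$ of this commuting action. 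In particular, for any $\gamma\in\Aut(X)$ the bijection $f\mapsto \gamma\cdot f$ preserves $\fG_\lambda$-stabilizers, i.e.\ $\mathrm{Stab}_{\fG_\lambda}(\gamma\cdot f)=\mathrm{Stab}_{\fG_\lambda}(f)$, since $g\cdot(\gamma\cdot f)=\gamma\cdot(g\cdot f)$ and $\gamma$ is injective on $\fB(X)$.

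Next I would fix basepoints. Let $f_0\colon\Omega_\lambda\to X$ be the chosen surjection, so that $V:=\fG_\lambda(X)=\mathrm{Stab}_{\fG_\lambda}(f_0)$, and let $[f_0]$ denote its image in $\fB(X)/\Gamma$. Under the isomorphism $\fB(X)/\Gamma\cong\fG_\lambda/U$, the stabilizer of a point is an honest conjugate of $U$; since $\fB(X)/\Gamma$ is transitive and $[f_0]$ is one of its points, $\mathrm{Stab}_{\fG_\lambda}([f_0])$ is conjugate to $U$. Replacing $U$ by this conjugate, I may assume $U=\mathrm{Stab}_{\fG_\lambda}([f_0])$.

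The containment $V\subset U$ is then immediate: any $g$ fixing $f_0$ fixes its image $[f_0]$. For the containment $U\subset\rN\fG_\lambda(X)$, suppose $g\in U$, so that $g\cdot f_0$ and $f_0$ lie in the same $\Gamma$-orbit, say $g\cdot f_0=\gamma\cdot f_0$ with $\gamma\in\Gamma$. By the stabilizer-preservation observation above, $\mathrm{Stab}_{\fG_\lambda}(\gamma\cdot f_0)=V$, while $\mathrm{Stab}_{\fG_\lambda}(g\cdot f_0)=gVg^{-1}$ by general group theory. Since $g\cdot f_0=\gamma\cdot f_0$, these two subgroups coincide, forcing $gVg^{-1}=V$, i.e.\ $g\in\rN\fG_\lambda(X)$. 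This establishes the sandwich $\fG_\lambda(X)\subset U\subset\rN\fG_\lambda(X)$.

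I do not expect a genuine obstacle here, as Proposition~\ref{prop:Gclass} supplies the essential input. The only thing requiring care is the bookkeeping that cleanly separates the pre-composition action of $\fG_\lambda$ from the post-composition action of $\Aut(X)$, together with the elementary but crucial fact that a commuting action permutes points without altering $\fG_\lambda$-stabilizers; that fact is exactly what converts the relation $g\cdot f_0=\gamma\cdot f_0$ into the normalizer condition $gVg^{-1}=V$.
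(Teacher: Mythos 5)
Your proof is correct and follows exactly the route the paper intends: the paper states the corollary as an immediate consequence of Proposition~\ref{prop:Gclass} (writing $\fG_\lambda/U \cong \fB(X)/\Gamma$ and reading off stabilizers), and your argument is precisely that deduction, with the commuting post-composition action of $\Gamma$ and the identity $\mathrm{Stab}_{\fG_\lambda}(\gamma\cdot f_0)=\mathrm{Stab}_{\fG_\lambda}(f_0)$ correctly supplying the details the paper leaves implicit.
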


We note that the above proposition is \cite[Proposition~3.10]{cantor} in the case where $\cE$ is the category of finite sets. The present proof is modeled on the proof there.

The proof of the proposition will take the remainder of \S \ref{ss:trans}. By definition, every transitive smooth $\fG_{\lambda}$-set is a quotient of some $\fB(X)$. To prove the proposition, we study equivalence relations on $\fB(X)$. Let $\cA$ denote the of ample subobjects of $X \times X$. Recall from Proposition~\ref{prop:B-prod} that the $\fG_{\lambda}$-orbits on $\fB(X) \times \fB(X)$ are indexed by $\cA$, and so $\fG_{\lambda}$-stable subsets of $\fB(X) \times \fB(X)$ correspond to subsets of $\cA$. As a first step, we characterize equivalence relations on $\fB(X)$ from this point of view.

\begin{lemma} \label{lem:Gclass-1}
Let $\fR$ be a $\fG_{\lambda}$-stable subset of $\fB(X) \times \fB(X)$ corresponding to $\cR \subset \cA$. Then $\fR$ is an equivalence relation if and only if $\cR$ satisfies the following conditions:
\begin{enumerate}
\item $\cR$ contains the diagonal $\Delta_X$.
\item If $A \in \cR$ then $A^t \in \cR$.
\item If $W$ is a subobject of $X \times X \times X$ such that $p_{12}(W)$ and $p_{23}(W)$ belong to $\cR$ then $p_{13}(W)$ belongs to $\cR$.
\end{enumerate}
Moreover, (c) implies $\cR$ is closed under composition, i.e., if $A,B \in \cR$ then $B \circ A \in \cR$.
\end{lemma}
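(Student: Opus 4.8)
The plan is to translate each of reflexivity, symmetry, and transitivity of $\fR$ into the corresponding statement about $\cR$, using throughout a dictionary between $\fG_\lambda$-orbits on powers of $\fB(X)$ and ample subobjects of powers of $X$. First I would record, by generalizing Proposition~\ref{prop:B-prod} (or arguing directly from the homogeneity of $\Omega_\lambda$), that for every $n$ the $\fG_\lambda$-orbits on $\fB(X)^n$ are in bijection with the ample subobjects of $X^n$: the orbit of a tuple $(f_1,\dots,f_n)$ of surjections $\Omega_\lambda\to X$ corresponds to the image $\im(f_1,\dots,f_n)\subset X^n$ of the induced map. The point is that two tuples lie in one orbit iff they have equal image, since by homogeneity any two surjections from $\Omega_\lambda$ onto a fixed object of type $\lambda$ are $\fG_\lambda$-conjugate. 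The case $n=2$ recovers the indexing of $\fB(X)\times\fB(X)$ by $\cA$ assumed in the statement. Reflexivity and symmetry then fall out at once: the diagonal orbit is the orbit of $(f,f)$, whose image is $\Delta_X$ (as $f$ is surjective and $\Delta_X\colon X\to X\times X$ is injective, the composite $\Delta_X\circ f$ is already in factored form), so $\fR$ is reflexive iff $\Delta_X\in\cR$, which is (a); and the transpose sends $(f,g)$ to $(g,f)$ with $\im(g,f)=\im(f,g)^t$, so $\fR=\fR^t$ iff $\cR$ is stable under $A\mapsto A^t$, which is (b).

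The substantive point is transitivity, where the $n=3$ case is essential. For (c) $\Rightarrow$ transitive: given $(f,g),(g,h)\in\fR$, form $(f,g,h)\colon\Omega_\lambda\to X^3$ and set $W=\im(f,g,h)$. Since image formation is unchanged by precomposing with a surjection (composites of surjections being surjective), I get $p_{12}(W)=\im(f,g)\in\cR$ and $p_{23}(W)=\im(g,h)\in\cR$, so (c) yields $p_{13}(W)=\im(f,h)\in\cR$, i.e.\ $(f,h)\in\fR$. Conversely, to deduce (c) from transitivity, take any $W\subset X^3$ with $p_{12}(W),p_{23}(W)\in\cR$; these are ample, which forces $W$ to surject onto every factor, so $W$ is ample and of type $\lambda$. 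By universality of $\Omega_\lambda$ there is a surjection $\Omega_\lambda\twoheadrightarrow W$, and composing with the three projections gives $f,g,h$ with $\im(f,g)=p_{12}(W)$, $\im(g,h)=p_{23}(W)$, and $\im(f,h)=p_{13}(W)$; transitivity then forces $p_{13}(W)\in\cR$.

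Finally, for the ``moreover'' clause I would apply (c) to the single subobject $W=A\times_X B\subset X^3$ (fiber product over the middle factor) for $A,B\in\cR$: ampleness of $B$ makes $W\to A$ a base change of a surjection, hence surjective, so $p_{12}(W)=A$, symmetrically $p_{23}(W)=B$, while $p_{13}(W)=B\circ A$ by the definition of composition; thus $B\circ A\in\cR$. I expect the main obstacle to be bookkeeping rather than conceptual: one must check carefully that the projections $p_{ij}$ commute with image formation in the required way (using only that surjections compose and that the regular image factorization is stable under pullback), and one must keep straight that (c) is genuinely stronger than closure under composition—the latter records only the ``maximal'' gluing $A\times_X B$, whereas (c) controls every ample $W$ lying over $A$ and $B$.
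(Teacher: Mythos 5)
Your proposal is correct and matches the paper's proof in all essentials: the same dictionary between orbits on $\fB(X)^n$ and images of tuples of surjections, the same use of $W=\im(f,g,h)$ in both directions of the transitivity/(c) equivalence (with your explicit check that $W$ is ample of type $\lambda$, which the paper leaves implicit when choosing a surjection $\Omega_\lambda \to W$), and the same fiber-product witness for the ``moreover'' clause. The only differences are cosmetic, e.g.\ your $A \times_X B$ versus the paper's $B \times_X A$, which is immaterial since $A$ and $B$ range over all of $\cR$.
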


\begin{proof}
By definition, $\fB(X)$ is the set of surjective maps $\Omega_{\lambda} \to X$. Thus $\fB(X) \times \fB(X)$ consists of pairs $(f,g)$ of such maps. If $Y$ is an ample subobject of $X \times X$ then $\fB(Y)$ consists of those pairs $(f,g)$ for which the image of the map $(f,g) \colon \Omega_{\lambda} \to X \times X$ is $Y$.

Now, suppose $\fR$ is an equivalence relation. Then $\fR$ contains $(f,f)$ for any $f$, and so (a) holds. If $(f,g) \in \fR$ then $(g,f) \in \fR$, which shows that (b) holds. Finally, suppose $W$ as in (c) is given. Choose a surjection $\Omega_{\lambda} \to W$, which we identify with a triple $(f,g,h)$ of maps $\Omega_{\lambda} \to X$. The image of $(f,g)$ is $p_{12}(W)$, which belongs to $\cR$, and so $(f,g) \in \fR$. Similarly, $(g,h) \in \fR$. Since $\fR$ is transitive, it follows that $(f,h) \in \fR$. Thus its image, which is $p_{13}(W)$, belongs to $\cR$, and so (c) holds.

Now suppose $\cR$ satisfies (a)--(c). It is clear that $\fR$ is reflexive and symmetric. We now prove that $\fR$ is transitive. Thus let $(f,g)$ and $(g,h)$ belong to $\fR$. Let $W \subset X \times X \times X$ be the image of $(f,g,h)$. Then $p_{12}(W)$ is the image of $(f,g)$, which belongs to $\cR$. Similarly, $p_{23}(W)$ belongs to $\cR$. Thus $p_{13}(W)$ belongs to $\cR$ too. Since this is the image of $(f,h)$, we see that $(f,h)$ belongs to $\fR$, and so $\fR$ is transitive.

Finally, suppose (c) holds, and let $A,B \in \cR$. Let $W=B \times_X A$, which is a subobject of $X \times X \times X$. We have $p_{12}(W)=B$ (since $A$ is ample) and $p_{23}(W)=A$ (since $B$ is ample). Thus $p_{13}(W)=B \circ A$ belongs to $\cR$, as required.
\end{proof}

For $\cR \subset \cA$, let  $\cR^{\circ}$ be the subset of $\cR$ consisting of those relations that contain $\Delta_X$.

\begin{lemma} \label{lem:Gclass-3}
Let $\cR \subset \cA$ satisfy conditions (a)--(c) of Lemma~\ref{lem:Gclass-1}, and let $R$ be a maximal element of $\cR^{\circ}$ (ordered by inclusion). Then:
\begin{enumerate}
\item $R$ is an equivalence relation on $X$.
\item $R$ contains every member of $\cR^{\circ}$.
\item $\cR$ contains every ample subobject of $R$.
\end{enumerate}
\end{lemma}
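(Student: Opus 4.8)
The plan is to prove the three parts in order, exploiting that $\cR$ contains $\Delta_X$ (condition (a) of Lemma~\ref{lem:Gclass-1}), is closed under transpose (condition (b)) and under composition (the final assertion of that lemma), together with the maximality of $R$. I will use three elementary facts about relations in a regular category: $\Delta_X$ is a two-sided identity for composition, composition is monotone and associative with $(B \circ A)^t = A^t \circ B^t$, and both transposition and composition preserve ampleness. The last point is the only one needing care: the two projections of $B \circ A$ onto the factors $X$ are surjective because they factor the surjections coming from ampleness of $A$ and $B$ through base change. Consequently, any relation I build from members of $\cR^\circ$ by composing and transposing again lies in $\cA \cap \cR$, and if it visibly contains $\Delta_X$ it lies in $\cR^\circ$.

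For part (a), reflexivity is built into membership in $\cR^\circ$. For transitivity, $R \circ R \in \cR$ is ample and contains $\Delta_X \subseteq R \subseteq R \circ R$, so $R \circ R \in \cR^\circ$; since $R \subseteq R \circ R$, maximality forces $R \circ R = R$. For symmetry, observe first that $R^t \in \cR^\circ$ (the transpose of a reflexive ample member of $\cR$ is again one). Then $R \circ R^t \in \cR^\circ$ and $R = R \circ \Delta_X \subseteq R \circ R^t$, so maximality gives $R \circ R^t = R$; since $R \circ R^t$ is visibly self-transpose, being $(R \circ R^t)^t = R \circ R^t$, we conclude $R = R^t$. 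For part (b), given $S \in \cR^\circ$, form $C = S \circ R$: it is ample, lies in $\cR$, and contains $\Delta_X = \Delta_X \circ \Delta_X$, so $C \in \cR^\circ$; moreover $R = \Delta_X \circ R \subseteq C$ and $S = S \circ \Delta_X \subseteq C$. By maximality $C = R$, whence $S \subseteq R$.

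Part (c) is the substantive step. Let $A \in \cA$ with $A \subseteq R$. The idea is to realize $A$ as a $p_{13}$-image whose $p_{12}$- and $p_{23}$-images are $R$, and then apply condition (c) of Lemma~\ref{lem:Gclass-1}. Concretely, set
\[
W = p_{13}^{-1}(A) \cap p_{12}^{-1}(R) \cap p_{23}^{-1}(R) \subseteq X \times X \times X,
\]
the subobject of triples $(x,y,z)$ with $(x,z) \in A$ and $(x,y), (y,z) \in R$. The inclusion $W \subseteq p_{13}^{-1}(A)$ gives $p_{13}(W) \subseteq A$; conversely the map $A \to X^3$, $a \mapsto (\pi_1 a, \pi_1 a, \pi_2 a)$ (with $\pi_1,\pi_2$ the projections of $A$) factors through $W$, because $\Delta_X \subseteq R$ and $A \subseteq R$, so $p_{13}(W) = A$. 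Likewise $p_{12}(W) \subseteq R$ and $p_{23}(W) \subseteq R$, and I claim both are equalities. Condition (c) of Lemma~\ref{lem:Gclass-1} then yields $A = p_{13}(W) \in \cR$, as desired.

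The main obstacle is the identity $p_{12}(W) = R$ (and its mirror $p_{23}(W) = R$), whose set-theoretic proof must be recast categorically, with the pointwise choices replaced by surjectivity statements. The comparison map to use is
\[
R \times_{q_1, X, \pi_1} A \to W, \qquad \bigl((x,y),(x,z)\bigr) \mapsto (x,y,z),
\]
where $q_1$ and $\pi_1$ are the first projections of $R$ and $A$. This map is well defined precisely because symmetry and transitivity of $R$ (part (a)) force $(y,z)$ to lie in $R$, and its composite with $p_{12}$ is the projection $R \times_X A \to R$, which is a base change of the surjection $\pi_1 \colon A \to X$ (ampleness of $A$) and hence surjective by stability of surjections under base change. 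Thus $p_{12}(W) \supseteq R$, giving equality; the case of $p_{23}$ is symmetric, using surjectivity of $A$ onto its second factor. Once these two image identities are established, everything else in part (c) is formal.
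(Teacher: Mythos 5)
Your proposal is correct and follows essentially the same route as the paper: parts (a) and (b) via maximality combined with closure of $\cR$ under composition and transpose (the paper does both at once using the single composite $R \circ S$, while you use $R \circ R$, $R \circ R^t$, and $S \circ R$ separately, a purely cosmetic difference), and part (c) via the identical subobject $W = p_{12}^{-1}(R) \cap p_{13}^{-1}(A) \cap p_{23}^{-1}(R)$ with the same image computations $p_{12}(W)=p_{23}(W)=R$ and $p_{13}(W)=A$ feeding into condition~(c) of Lemma~\ref{lem:Gclass-1}. Your comparison map $R \times_X A \to W$ simply makes explicit the functor-of-points rigorization that the paper itself says its elemental argument admits, and it is carried out correctly.
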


\begin{proof}
Let $S$ be an arbitrary element of $\cR^{\circ}$. We have a map $R \to R \times_X S$ given by $(x,y) \mapsto ((x,y),(y,y))$, since $S$ contains $\Delta_X$, which shows that $R \circ S$ contains $R$; similarly, it contains $S$. Since $R$ is maximal, it follows that $R=R \circ S$, and $S \subset R$. In particular, $R=R^t$ and $R=R \circ R$, and so $R$ is an equivalence relation. This proves (a) and (b).

We now prove (c). Let $A$ an ample subobject of $R$. Let $W \subset X \times X \times X$ be $p_{12}^{-1}(R) \cap p_{13}^{-1}(A) \cap p_{23}^{-1}(R)$; intuitively, $W$ consists of triples $(x,y,z)$ such that $(x,y) \in R$ and $(y,z) \in R$ and $(x,z) \in A$. We claim $p_{12}(W)=R$. To see this, we use elemental notation (which can be made rigorous using a functor of points approach). Thus suppose $(x,y) \in R$. There is some $z$ such that $(x,z) \in A$ (since $A$ is ample). Since $A \subset R$, it follows that $(x,z) \in R$, and since $R$ is an equivalence relation, we have $(y,z) \in R$. Thus $(x,y,z) \in W$, and so $(x,y) \in p_{12}(W)$. This proves the claim. Similar reasoning shows $p_{23}(W)=R$ and $p_{13}(W)=A$, and so $A \in \cR$ as required.
\end{proof}

Suppose $\Gamma$ is a subgroup of $\Aut(X)$. Then $\Gamma$ acts on $\fB(X)$, and thus induces an equivalence relation $\fR_{\Gamma}$ on it. In the next two lemmas, we establish an important dichotomy involving these relations.

\begin{lemma} \label{lem:Gclass-2}
Let $A$ be an ample subobject of $X \times X$, and let $B=A \circ A^t$. Then $B$ contains $\Delta_X$, and $B=\Delta_X$ if and only if $p_2 \colon A \to X$ is an isomorphism.
\end{lemma}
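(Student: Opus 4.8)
The plan is to reduce everything to a point-free description of the object whose image is $B$. Unwinding the definition of composition, $B = A \circ A^t$ is the image in $X \times X$ of the fiber product $A \times_X A^t$. The key observation is that, after transposing, this fiber product is exactly the kernel pair of the second projection $p_2 \colon A \to X$: the composition glues the two factors along $p_2$ (the transpose interchanges the two projections, so the factor coming from $A^t$ is also glued along $p_2$), giving $A \times_X A^t \cong A \times_{p_2} A$ with projections $\pi_1, \pi_2$, and the comparison map to $X \times X$ becomes $(p_1 \pi_1, p_1 \pi_2)$. Thus $B = \im\big((p_1 \pi_1, p_1 \pi_2)\big)$. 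Once this identification is secured, all three assertions can be read off. I would verify it either by the functor-of-points calculus used in Lemma~\ref{lem:Gclass-3} or directly from the universal properties of the two fiber products involved.

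For the containment $\Delta_X \subseteq B$, I would use the diagonal section $A \to A \times_{p_2} A$, whose composite with the comparison map is $(p_1, p_1) \colon A \to X \times X$. Since the diagonal $X \to X \times X$ is a monomorphism and $p_1$ is surjective (as $A$ is ample), the image of $(p_1, p_1)$ is all of $\Delta_X$; as it factors through $B$, we get $\Delta_X \subseteq B$. The backward implication of the biconditional is then immediate: if $p_2$ is an isomorphism, its kernel pair is trivial ($\pi_1 = \pi_2$), so $B = \im(p_1, p_1) = \Delta_X$.

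The substance is the forward implication. Assuming $B = \Delta_X$, I want to conclude that $p_2$ is a monomorphism; combined with surjectivity of $p_2$ (ampleness) and the fact that a morphism that is both injective and surjective in a regular category is an isomorphism, this finishes the proof. To see that $p_2$ is monic I show its kernel pair is trivial. Because $\im(p_1\pi_1, p_1\pi_2) = B = \Delta_X$, the map $(p_1\pi_1, p_1\pi_2)$ factors through the diagonal, forcing $p_1\pi_1 = p_1\pi_2$; on the other hand $p_2\pi_1 = p_2\pi_2$ by definition of the kernel pair. Since $(p_1,p_2)$ is just the monomorphism $A \hookrightarrow X \times X$, the agreement of both components yields $\pi_1 = \pi_2$, i.e. $p_2$ is a monomorphism. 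I expect the main obstacle to be the very first step — pinning down the correct point-free description of $A \circ A^t$ as the kernel pair of $p_2$, since the transpose swaps the roles of $p_1$ and $p_2$ and it is easy to glue along the wrong projection — because reflexivity, the backward direction, and the forward direction all fall out mechanically once that description is in hand.
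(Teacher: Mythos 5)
Your proof is correct and follows essentially the same route as the paper's: both identify $A \times_X A^t$ with the kernel pair of $p_2 \colon A \to X$, deduce $\Delta_X \subseteq B$ from surjectivity of $p_1$, and conclude from $B = \Delta_X$ that $p_2$ is monic, hence an isomorphism since it is also a regular epimorphism. The only cosmetic difference is that the paper extracts monicity via a cartesian square and the fact that $X \times_Y \im(f) \to X$ is an isomorphism, whereas you derive $\pi_1 = \pi_2$ directly by factoring the comparison map through the diagonal and using that $A \hookrightarrow X \times X$ is monic --- the same argument in slightly different dress.
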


\begin{proof}
Consider the following cartesian diagram
\begin{displaymath}
\xymatrix@C=3em{
A \ar[r] \ar[d]_{p_1} & A \times_X A^t \ar[d] \\
X \ar[r]^-{\Delta} & X \times X. }
\end{displaymath}
The top map is $(x,y) \mapsto ((x,y),(y,x))$, and the right map is $((x,y),(y,z)) \mapsto (x,z)$. Since $p_1$ is surjective, it follows that $B$ contains $\Delta_X$. Now, in any regular category, if $f \colon X \to Y$ is a map then $X \times_Y \im(f) \to X$ is an isomorphism. Thus if $B=\Delta_X$ then $A \to A \times_X A^t$ is an isomorphism, from which it follows that $p_2 \colon A \to X$ is monic, and thus an isomorphism (since it is also surjective).
\end{proof}

\begin{lemma}
Let $\fR \subset \fB(X) \times \fB(X)$ be an equivalence relation. Then one of the following is true:
\begin{enumerate}
\item There exists a proper surjection $X \to Y$ such that $\fR$ contains $\fB(X) \times_{\fB(Y)} \fB(X)$.
\item $\fR=\fR_{\Gamma}$ for some subgroup $\Gamma$ of $\Aut(X)$.
\end{enumerate}
\end{lemma}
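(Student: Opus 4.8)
The plan is to reduce the entire statement to the combinatorics of ample subobjects via Lemma~\ref{lem:Gclass-1}, and then to run a dichotomy on a maximal diagonal-containing relation. First I would translate $\fR$ into a subset $\cR \subset \cA$ satisfying conditions (a)--(c) of Lemma~\ref{lem:Gclass-1}; recall this $\cR$ is then closed under transpose and composition. Since $\cE$ is finitely-powered, $\cA$ is finite, and $\cR^{\circ}$ is non-empty because $\Delta_X \in \cR^{\circ}$ by (a). I would choose a maximal element $R$ of $\cR^{\circ}$ and invoke Lemma~\ref{lem:Gclass-3}: $R$ is an equivalence relation on $X$, it contains every member of $\cR^{\circ}$, and $\cR$ contains every ample subobject of $R$. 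The two alternatives of the lemma will correspond exactly to whether $R \neq \Delta_X$ or $R = \Delta_X$.

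In the first case, $R \neq \Delta_X$. Since $\cE$ is exact, $R$ is effective, so it is the kernel pair of a surjection $q \colon X \to Y$; this $q$ is proper (not an isomorphism) precisely because its kernel pair $R$ is not the diagonal. As $q$ is surjective, $Y$ has type $\lambda$ and $q$ is a morphism of $\cE^s_{\lambda}$, and $X \times_Y X = R$. By Proposition~\ref{prop:B-prod}, $\fB(X) \times_{\fB(Y)} \fB(X) = \coprod_W \fB(W)$, the union running over ample subobjects $W$ of $R$. By Lemma~\ref{lem:Gclass-3}(c), every such $W$ lies in $\cR$, so $\fR \supset \fB(X) \times_{\fB(Y)} \fB(X)$, which is conclusion (a).

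In the second case, $R = \Delta_X$, so $\cR^{\circ} = \{\Delta_X\}$. Here the goal is to show every $A \in \cR$ is the graph $\Gamma_{\gamma}$ of an automorphism $\gamma \in \Aut(X)$. For any $A \in \cR$, both $A \circ A^t$ and $A^t \circ A$ lie in $\cR$ (closure under transpose and composition) and contain $\Delta_X$ (Lemma~\ref{lem:Gclass-2}, applied to $A$ and to $A^t$), hence both lie in $\cR^{\circ} = \{\Delta_X\}$ and thus equal $\Delta_X$. Applying Lemma~\ref{lem:Gclass-2} once to $A$ and once to $A^t$ forces both projections $p_1, p_2 \colon A \to X$ to be isomorphisms, so $A = \Gamma_{\gamma}$ with $\gamma = p_2 \circ p_1^{-1}$. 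Setting $\Gamma = \{\gamma \in \Aut(X) : \Gamma_{\gamma} \in \cR\}$, conditions (a) and (b) together with closure under composition make $\Gamma$ a subgroup of $\Aut(X)$; and since the subset of $\cA$ corresponding to $\fR_{\Gamma}$ is exactly $\{\Gamma_{\gamma} : \gamma \in \Gamma\}$, we obtain $\cR = \{\Gamma_{\gamma} : \gamma \in \Gamma\}$, i.e.\ $\fR = \fR_{\Gamma}$, which is conclusion (b).

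The hard part will be the forcing step in the second case, that both projections of each $A \in \cR$ are isomorphisms. Exactness is of no help here; what does the work is the interaction between the maximality of $R = \Delta_X$ in $\cR^{\circ}$ and the \emph{two-sided} application of Lemma~\ref{lem:Gclass-2}. One must use the lemma for $A$ to collapse $A \circ A^t$ to the diagonal (invertibility of $p_2$) and separately for $A^t$ to collapse $A^t \circ A$ (invertibility of $p_1$); either application alone would leave open the possibility of a genuinely one-sided ample relation, so it is essential that $\cR$ be closed under transpose and that $\cR^{\circ}$ have $\Delta_X$ as its only element.
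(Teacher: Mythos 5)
Your proposal is correct and follows essentially the same route as the paper: translate $\fR$ to $\cR \subset \cA$, take a maximal $R \in \cR^{\circ}$ and apply Lemma~\ref{lem:Gclass-3} to land in case (a) when $R \ne \Delta_X$ (with $Y = X/R$, using exactness), and otherwise use closure under transpose/composition together with the two-sided application of Lemma~\ref{lem:Gclass-2} to show every $A \in \cR$ is the graph of an automorphism. Your write-up merely makes explicit a step the paper leaves implicit (that $A \circ A^t$ and $A^t \circ A$ lie in $\cR^{\circ} = \{\Delta_X\}$ before invoking Lemma~\ref{lem:Gclass-2}), which is a faithful elaboration rather than a different argument.
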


\begin{proof}
Let $\cR \subset \cA$ correspond to $\fR$. Let $R$ be the maximal element of $\cR^{\circ}$, which is an equivalence relation on $X$ by Lemma~\ref{lem:Gclass-3}(a), and let $Y=X/R$. Let $\fR'=\fB(X) \times_{\fB(Y)} \fB(X)$, and let $\cR' \subset \cA$ correspond to $\fR'$. By Proposition~\ref{prop:B-prod}, $\cR'$ is exactly the set of ample subobjects of $X \times_Y X=R$. Thus $\cR'$ is contained in $\cR$ by Lemma~\ref{lem:Gclass-3}(c), and so $\fR'$ is contained in $\fR$. We are therefore in case (a), provided $R$ is larger than $\Delta_X$.

Assume now that $R=\Delta_X$. This means $\Delta_X$ is the only element of $\cR$ containing the diagonal. By Lemma~\ref{lem:Gclass-2} we see that if $A \in \cR$ then $p_2 \colon A \to X$ is an isomorphism; since $A^t \in \cR$ too, we see that $p_1 \colon A \to X$ is also an isomorphism. Thus $A$ is the graph of an automorphism of $X$. Let $\Gamma$ be the collection of these automorphisms. Since $\cR$ is closed under transpose and composition, it follows that $\Gamma$ is a subgroup of $\Aut(X)$. We have $\fR=\fR_{\Gamma}$, as required.
\end{proof}

\begin{proof}[Proof of Proposition~\ref{prop:Gclass}]
Let $\fZ$ be a transitive smooth $\fG_{\lambda}$-set. By definition, there is a surjection $\fB(X) \to \fZ$ for some $X$. Choose $X$ minimal, in the sense that there is no surjection $\fB(W) \to \fZ$ for any proper quotient of $W$ of $X$; this is possible since $X$ has finitely many quotients by \S \ref{ss:fp}(b). We have $\fZ = \fB(X)/R$, where $R$ is the equivalence relation on $\fB(X)$ induced by the surjecttion $\fB(X) \to \fZ$. By our minimality assumption, we are in case~(b) of the previous lemma, which shows that $\fZ=\fB(X)/\Gamma$ for some subgroup $\Gamma$ of $\Aut(X)$. This completes the proof.
\end{proof}

\subsection{Simply connected sets}

Recall from \S \ref{ss:oliggp} the notion of simply connected $\fG_{\lambda}$-set. The next result gives a complete characterization of these sets.

\begin{proposition} \label{prop:sc}
If $X$ is an object of $\cE^s_{\lambda}$ then $\fB(X)$ is simply connected. If $\cE$ is exact, then every simply connected transitive smooth $\fG_{\lambda}$-set is of the form $\fB(X)$.
\end{proposition}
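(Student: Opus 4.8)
The plan is to first reduce simple connectedness to a statement purely about \emph{transitive} covers, and then to separate the two assertions: the second will follow cleanly from the classification, while the first requires a genuine amalgamation argument that I expect to be the hard part. For the reduction, note that $\fB(X)^{\amalg n}$ over $\fB(X)$ has exactly $n$ transitive components, each mapping isomorphically to the base; hence a finite cover $Y \to T$ of a transitive set $T$ is trivial if and only if each transitive component of $Y$ maps isomorphically onto $T$. Since the components of a finite cover are themselves transitive finite covers, $T$ is simply connected if and only if every transitive finite cover of $T$ is an isomorphism. For the first assertion I would moreover reduce to the exact case: by the remark following Proposition~\ref{prop:B-prod}, $\cE$ and $\cE_{\rm ex/reg}$ produce the same groups $\fG_{\lambda}$ and the same sets $\fB(X)$, and $X$ remains an object of $\cE_{\rm ex/reg}$, so simple connectedness of $\fB(X)$ is unaffected by passing to the exact completion.

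For the first assertion, let $Z \to \fB(X)$ be a transitive finite cover; I want it to be an isomorphism. By Proposition~\ref{prop:Gclass} (now available since $\cE$ is exact) I write $Z = \fB(W)/\Gamma$ with $W \in \cE^s_{\lambda}$ and $\Gamma \subseteq \Aut(W)$; as $\cE$ is finitely powered, $\Aut(W) \subseteq \Sub(W \times W)$ is finite, so $\Gamma$ is finite and $\fB(W) \to Z$ has fibers of size at most $|\Gamma|$. Thus the composite $\fB(W) \to Z \to \fB(X)$ is a $\fG_{\lambda}$-map with finite fibers, and by full faithfulness of $\fB$ it is $\fB(\pi)$ for a unique surjection $\pi\colon W \to X$ in $\cE^s_{\lambda}$. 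Everything then reduces to the claim: \emph{if $\fB(\pi)$ has finite fibers, then $\pi$ is an isomorphism.} Granting this, $\fB(W) \cong \fB(X)$, and since the composite factors through the quotient, $\Gamma \subseteq \{\gamma : \pi\gamma = \pi\}$, which is trivial because $\pi$ is invertible; hence $Z = \fB(W) \cong \fB(X)$.

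To prove the claim I would argue contrapositively: if $\pi$ is not an isomorphism, the fiber $F$ of $\fB(\pi)$ over a fixed $f_0$ is infinite. A surjection that is not an isomorphism is not monic, so $K := W \times_X W$ strictly contains $\Delta_W$; as $K$ is ample and surjects onto $X$, universality of $\Omega_{\lambda}$ gives a surjection $(g_0,g_1)\colon \Omega_{\lambda} \to K$, so $g_0 \neq g_1$ in $\fB(W)$ with $\pi g_0 = \pi g_1$, and after translating by $\fG_{\lambda}$ I may assume $\pi g_0 = f_0$, giving $|F| \geq 2$. To upgrade this to $|F| = \infty$ I would amalgamate: for each $k$, I claim the iterated fiber product $W \times_X \cdots \times_X W$ ($k$ factors) contains an ample subobject $V_k$, surjecting onto $X$, all of whose pairwise projections to $W \times_X W$ avoid $\Delta_W$; this is built inductively from $K$ using the non-emptiness of fiber products in the atomic category $\cE^s_{\lambda}$ (Proposition~\ref{prop:B-prod}). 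Universality then yields a surjection onto $V_k$, i.e.\ pairwise-distinct $g_1,\dots,g_k \in \fB(W)$ with a common image in $\fB(X)$; translating into $F$ gives $|F| \geq k$, whence $F$ is infinite.

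The second assertion is short given the classification. Assuming $\cE$ exact, let $Z$ be a simply connected transitive smooth $\fG_{\lambda}$-set, and write $Z = \fB(X)/\Gamma$ by Proposition~\ref{prop:Gclass}, with $\Gamma \subseteq \Aut(X)$ finite. The quotient map $q\colon \fB(X) \to Z$ is then a finite cover (its source is smooth and its fibers have size at most $|\Gamma|$), so by simple connectedness $q$ is trivial, $\fB(X) \cong Z^{\amalg m}$ over $Z$; since $\fB(X)$ is transitive, $m = 1$ and $q$ is an isomorphism, giving $Z \cong \fB(X)$. This is exactly where exactness is needed, through Proposition~\ref{prop:Gclass}. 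The main obstacle in the whole proof is the amalgamation step of the first assertion: the non-emptiness of fiber products guarantees configurations exist, but I must verify that successive gluings of $K = W \times_X W$ along a shared coordinate can be arranged to keep each new coordinate off-diagonal from all previous ones, i.e.\ that genericity is preserved under iterated amalgamation. Everything else is formal.
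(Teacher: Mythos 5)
Your overall architecture is sound, and the step you flag as the ``main obstacle'' is in fact not an obstacle --- but as you phrased it, it is false, so let me be precise. The condition you need on a subobject $V_k \subseteq W \times_X \cdots \times_X W$ is only that each pairwise projection $p_{ij}(V_k)$ is \emph{not contained in} $\Delta_W$: that is exactly what forces $g_i \neq g_j$ for a surjection $(g_1,\dots,g_k) \colon \Omega_{\lambda} \to V_k$. You do not need, and in general cannot have, projections \emph{disjoint} from $\Delta_W$, which is what ``keep each new coordinate off-diagonal from all previous ones'' suggests. Concretely, take $\cE = \bS(\fS)$, let $W$ be the set of ordered pairs of distinct elements, $X$ the set of $2$-element subsets, and $\pi$ the $2$-to-$1$ forgetful map: by pigeonhole every point of $W \times_X W \times_X W$ lies on some partial diagonal, so every non-empty subobject has some $p_{ij}$ meeting $\Delta_W$ --- yet the fibers of $\fB(\pi)$ are infinite, as your claim (and the proposition) requires. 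With the non-containment reading, no induction or genericity argument is needed at all: take $V_k$ to be the \emph{full} fiber power $W^{\times_X k}$. Each projection $V_k \to W$ and each pairwise projection $V_k \to W \times_X W$ is surjective (a base change, resp.\ composite of base changes, of the surjection $\pi$), so $p_{ij}(V_k) = K \supsetneq \Delta_W$ since $\pi$ is not monic; universality then gives a surjection $\Omega_{\lambda} \to V_k$ (note $V_k$ has type $\lambda$, as it surjects onto $W$), producing $k$ pairwise-distinct elements of $\fB(W)$ in a single fiber of $\fB(\pi)$. (Alternatively, your inductive gluing $T = V_k \times_W K$ does work for the weak condition: since $\Delta_W \subseteq K$, there is an embedding $V_k \hookrightarrow T$, whence $p_{i,k+1}(T) \supseteq p_{i,k}(V_k)$ is off-diagonal by induction.) With this repair your proof is complete.

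For comparison: the paper proves the same key claim --- a finite cover $\fB(W) \to \fB(X)$ induced by a surjection $\pi$ forces $\pi$ to be an isomorphism --- but top-down rather than by counting. It trivializes the cover after base change along some $\fB(X') \to \fB(X)$ (Proposition~\ref{prop:triv-cover}, obtained by intersecting the finitely many conjugates of an open subgroup), identifies the base-changed fiber product as $\coprod \fB(W')$ over the ample subobjects $W'$ of $X' \times_X W$ via Proposition~\ref{prop:B-prod}, concludes that $X' \times_X W \to X'$ is an isomorphism, and then descends this along the surjection $X' \to X$ by a separate descent lemma. Your route replaces Proposition~\ref{prop:triv-cover} and the descent lemma by the direct statement that $\fB(\pi)$ has infinite fibers whenever $\pi$ is not an isomorphism; this uses only universality of $\Omega_{\lambda}$ and stability of surjections under base change, and yields slightly more information (a dichotomy: fibers of $\fB(\pi)$ are singletons or infinite). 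Your remaining steps --- the reduction to transitive covers, the passage to $\cE_{\rm ex/reg}$, and the deduction of both assertions from the classification in Proposition~\ref{prop:Gclass} --- match the paper's proof.
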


\begin{proof}
It suffices to treat the case where $\cE$ is exact, so we assume this. First suppose that we have a finite cover $\fB(Y) \to \fB(X)$, induced by a surjection $f \colon Y \to X$. Let $\fB(X') \to \fB(X)$ be a map trivializing the cover (Proposition~\ref{prop:triv-cover}), so that $\fB(X') \times_{\fB(Y)} \fB(X)$ is a disjoint union of copies of $\fB(X')$. On the other hand, this fiber product is the disjoint union of $\fB(W)$ over the ample subobjects of $Y'=X' \times_Y X$. We thus see that each such $W$ maps isomorphically to $X'$. Taking $W=Y'$, we see that $Y' \to X'$ is an isomorphism, and so $Y \to X$ is an isomorphism by following lemma. Thus the original cover is trivial.

Now suppose that we have a general finite cover $\fZ \to \fB(X)$. By the classification of transitive sets, we have $\fZ=\fB(Y)/\Gamma$ for some object $Y$ and some subgroup $\Gamma$ of $\Aut(Y)$. Since $\fB(Y) \to \fZ$ is a finite cover, we see that $\fB(Y) \to \fB(X)$ is a finite cover, and so $Y \cong X$. It follows that $\Gamma$ is trivial, and so the cover is trivial.

Finally, suppose that $\fZ$ is an arbitrary simply connected transitive $\fG_{\lambda}$-set. Then $\fZ=\fB(X)/\Gamma$ as above. Since $\fZ$ is simply connected, $\Gamma$ must be trivial, and so the proof is complete.
\end{proof}

\begin{lemma}
Let $f \colon Y \to X$ be a surjection in $\cE$. Suppose there is a surjection $X' \to X$ such that the base change $f' \colon Y' \to X'$ of $f$ is an isomorphism. Then $f$ is an isomorphism.
\end{lemma}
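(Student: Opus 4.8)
The plan is to reduce to showing that $f$ is a monomorphism, and then to extract this from the hypothesis by descent along the given surjection $g \colon X' \to X$. Indeed, $f$ is assumed surjective, so once it is known to be injective it will be an isomorphism, since a morphism of a regular category that is both injective and surjective is an isomorphism (the same fact is used, e.g., in the proof that $\cE^s_\lambda$ is atomic and in the proof of Lemma~\ref{lem:Gclass-2}). Thus the entire content is to prove that $f$ is a monomorphism.

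To test this, I would pass to the kernel pair $R = Y \times_X Y$ of $f$, with its two projections $p_1, p_2 \colon R \to Y$; by construction $f$ is a monomorphism if and only if $p_1 = p_2$. The key structural input is that kernel pairs commute with base change. Writing $Y' = X' \times_X Y$, letting $f' \colon Y' \to X'$ be the projection (the base change of $f$) and $h \colon Y' \to Y$ the other projection, one identifies the kernel pair $R' = Y' \times_{X'} Y'$ of $f'$ canonically with $X' \times_X R$, using only associativity of fiber products. Under this identification the projection $\pi \colon R' \to R$ is the base change of $g$ along the map $R \to X$, and the two projections $p_i' \colon R' \to Y'$ satisfy $h \circ p_i' = p_i \circ \pi$ for $i = 1,2$, simply by functoriality of the fiber-product construction.

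With this in place the argument is immediate. Since $g$ is surjective, Axiom (c) of a regular category makes its base change $\pi \colon R' \to R$ surjective, hence an epimorphism. Since $f'$ is an isomorphism it is in particular a monomorphism, so $p_1' = p_2'$; composing on the left with $h$ gives $p_1 \circ \pi = h \circ p_1' = h \circ p_2' = p_2 \circ \pi$, and cancelling the epimorphism $\pi$ on the right yields $p_1 = p_2$. Hence $f$ is a monomorphism, and therefore an isomorphism, as desired.

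The only step that needs care is the compatibility of kernel pairs with base change together with the identification of $\pi$ as the base change of $g$ and the relations $h \circ p_i' = p_i \circ \pi$; everything else is formal epimorphism cancellation. I expect this bookkeeping to be the main (though routine) obstacle, and a functor-of-points description, as used elsewhere in \S\ref{ss:trans}, makes all three identifications transparent.
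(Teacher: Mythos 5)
Your proposal is correct and takes essentially the same route as the paper: the paper's proof likewise compares the kernel pairs $Y' \times_{X'} Y' \rightrightarrows Y'$ and $Y \times_X Y \rightrightarrows Y$, notes that $f'$ being an isomorphism forces the two top projections to agree, and cancels the surjective (hence epimorphic) vertical map $Y' \times_{X'} Y' \to Y \times_X Y$ to conclude $p_1 = p_2$, so that $f$ is injective and therefore an isomorphism. The bookkeeping you single out---identifying $Y' \times_{X'} Y'$ with $X' \times_X (Y \times_X Y)$ so that this map is a base change of $X' \to X$---is exactly the step the paper leaves implicit.
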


\begin{proof}
Consider the diagram
\begin{displaymath}
\xymatrix@C=3em{
Y' \times_{X'} Y' \ar@<2pt>[r] \ar@<-2pt>[r] \ar[d] & Y' \ar[d] \\
Y \times_X Y\ar@<2pt>[r] \ar@<-2pt>[r] & Y}
\end{displaymath}
Since $f'$ is an isomorphism, the two top horizontal arrows are equal. Since the vertical maps are surjective, it follows that the two bottom horizontal arrows are equal. Thus $f$ is injective, and therefore an isomorphism.
\end{proof}

\subsection{The main theorem}

The following theorem ties together the results of this section.

\begin{theorem}
The functor $\cE^s_{\lambda} \to \bS(\fG_{\lambda})$ given by $X \mapsto \fB(X)$ is fully faithful. If $\cE$ is exact then its image consists of those smooth $\fG_{\lambda}$-sets that are transitive and simply connected.
\end{theorem}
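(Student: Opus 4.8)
The plan is to assemble the theorem from the results already proved in this section together with the general machinery of \S\ref{ss:atomic}; essentially no new computation is needed, as the substantive work has been done in Propositions~\ref{prop:Gclass} and~\ref{prop:sc}.

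First I would treat full faithfulness. We have shown that $\cE^s_\lambda$ is atomic, and by construction $\fG_\lambda = \Aut(\Omega_\lambda)$ while $\fB(X) = \Surj_{\Pro(\cE)}(\Omega_\lambda, X) = \Hom_{\cE^s_\lambda}(\Omega_\lambda, X)$ is precisely the functor $P$ attached to the atomic category $\cE^s_\lambda$ in \S\ref{ss:atomic}. That discussion identifies the $\amalg$-envelope of $\cE^s_\lambda$ with a category $\bS(\fG_\lambda; \fU)$ under an equivalence carrying the atom $X$ to the transitive set $\fB(X)$. Restricting this equivalence to atoms exhibits $X \mapsto \fB(X)$ as a fully faithful embedding $\cE^s_\lambda \to \bS(\fG_\lambda;\fU) \subseteq \bS(\fG_\lambda)$. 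This proves the first assertion, and it makes no use of exactness.

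Next, assuming $\cE$ exact, I would show the essential image is contained in the class of transitive, simply connected smooth $\fG_\lambda$-sets. Transitivity of each $\fB(X)$ is immediate from the homogeneity of $\Omega_\lambda$ recorded in \S\ref{ss:atomic}, namely that $\Aut(\Omega_\lambda)$ acts transitively on $\Hom(\Omega_\lambda,X)$. Simple connectivity of each $\fB(X)$ is exactly the first statement of Proposition~\ref{prop:sc}. Hence every object in the image is transitive and simply connected. For the reverse inclusion I would invoke the second statement of Proposition~\ref{prop:sc}, which (using exactness) asserts that every transitive simply connected smooth $\fG_\lambda$-set has the form $\fB(X)$ for some $X \in \cE^s_\lambda$. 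Combining the two inclusions identifies the essential image with the transitive, simply connected sets, completing the proof.

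The genuine content lives in the earlier results rather than in this synthesis: the hard direction is the second half of Proposition~\ref{prop:sc}, whose proof rests on the classification of transitive sets (Proposition~\ref{prop:Gclass}) --- every transitive set is $\fB(Y)/\Gamma$ --- together with the observation that simple connectivity forces $\Gamma$ to be trivial and $Y \cong X$. Accordingly, the main obstacle has already been cleared upstream; the only care required here is to import transitivity and full faithfulness correctly from the general atomic-category construction of \S\ref{ss:atomic}, and to flag that exactness is used solely in establishing the reverse inclusion.
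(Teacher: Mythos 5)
Your proposal is correct and matches the paper's own proof essentially verbatim: full faithfulness and transitivity come for free from the construction in \S\ref{ss:atomic}, and both inclusions for the essential image are read off from Proposition~\ref{prop:sc}, with exactness used only for the reverse inclusion. Your closing remark correctly locates the real content in Propositions~\ref{prop:Gclass} and~\ref{prop:sc}, which is exactly how the paper structures the argument.
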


\begin{proof}
The functor is fully faithful by construction. It also follows by construction that $\fB(X)$ is smooth and transitive, and we have seen that it is simply connected (Proposition~\ref{prop:sc}). Finally, if $\cE$ is exact then we have seen that every simply connected transitive smooth $\fG_{\lambda}$-set is of the form $\fB(X)$  (Proposition~\ref{prop:sc} again).
\end{proof}

\begin{remark}
An interesting problem is to characterize the pro-oligomorphic groups arising from a finitely-powered regular category as in this section. We observe one simple obstruction here. If $X$ and $Y$ are objects of $\cE$ of type $\lambda$ then it is not hard to see that $\fB(X \times Y)$ appears with multiplicity one in $\fB(X) \times \fB(Y)$, meaning no other orbit is isomorphic to it. This observation extends to quotients by finite groups too. We thus see that a product of two transitive smooth $\fG_{\lambda}$-sets always contains a multiplicity one orbit. The oligomorphic group $\Aut(\bR, <)$ connected to the Delannoy category \cite{line} does not satisfy this property, and thus cannot arise from any $\cE$.
\end{remark}

\section{The second embedding theorem} \label{s:embed2}

The purpose of this section is to prove a version of Theorem~\ref{mainthm1}(a), which connects a finitely-powered regular category to a something like an oligomorphic monoid.

\subsection{The main objects}

Fix a finitely powered regular category $\cE$ throughout \S \ref{s:embed2}. We assume for simplicity that $\cE$ is exact and has countably many isomorphism classes; see Remark~\ref{rmk:hypo} for further discussion. Let $\Lambda$, $\cE^s_{\lambda}$, $\Omega_{\lambda}$ and $\fG_{\lambda}$ be as in the previous section. We assume that the pro-objects $\Omega_{\lambda}$ are indexed by $\bN$, which is possible by the countability hypothesis on $\cE$.

Define a category $\fM$ as follows. The objects are the elements of $\Lambda$, and
\begin{displaymath}
\Hom_{\fM}(\lambda,\mu)=\Hom_{\Pro(\cE)}(\Omega_{\mu}, \Omega_{\lambda}).
\end{displaymath}
Thus $\fM$ is just the opposite of the full subcategory of $\Pro(\cE)$ spanned by the $\Omega_{\lambda}$'s. We let $\fM^{\circ}$ be the maximal groupoid in $\fM$. In this category, we have
\begin{displaymath}
\Hom_{\fM^{\circ}}(\lambda, \mu) = \begin{cases}
\fG_{\lambda} & \text{if $\lambda=\mu$} \\
\emptyset & \text{otherwise.}
\end{cases}
\end{displaymath}
If $X$ is an object of $\cE$ of type $\lambda$, we regard $\fB(X)$ as a $\fM^{\circ}$-set supported at the object $\lambda$. We also define an $\fM$-set $\fA(X)$ by
\begin{displaymath}
\fA(X)_{\lambda} = \Hom_{\Pro(\cE)}(\Omega_{\lambda}, X).
\end{displaymath}
We note that $X \mapsto \fA(X)$ is functorial for all morphisms in $\cE$. We now establish a few simple properties of this construction.

\begin{proposition} \label{prop:resA}
The restriction of $\fA(X)$ to $\fM^{\circ}$ naturally decomposes as $\coprod_{Y \subset X} \fB(Y)$.
\end{proposition}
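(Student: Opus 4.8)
The plan is to decompose $\fA(X)_{\lambda}=\Hom_{\Pro(\cE)}(\Omega_{\lambda},X)$, for each fixed $\lambda\in\Lambda$, by sorting morphisms according to their image; since restriction to $\fM^{\circ}$ just records the family of $\fG_{\lambda}$-sets $(\fA(X)_{\lambda})_{\lambda}$, it suffices to treat each $\lambda$ separately. In the regular category $\cE$ every morphism factors uniquely (up to isomorphism) as a surjection followed by an injection, and I would first transport this to a morphism $f\colon\Omega_{\lambda}\to X$ of $\Pro(\cE)$. Since $X$ is a constant pro-object, such an $f$ is represented by a genuine morphism $f_{i}\colon\Omega_{\lambda,i}\to X$ in $\cE$, which factors as $\Omega_{\lambda,i}\twoheadrightarrow Y\hookrightarrow X$ with $Y=\im(f_{i})$ a subobject of $X$; composing with the projection $\Omega_{\lambda}\to\Omega_{\lambda,i}$ exhibits $f$ as $\Omega_{\lambda}\twoheadrightarrow Y\hookrightarrow X$. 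Because surjections preserve type, $Y$ then has type $\lambda$.

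The one point requiring care is that $Y=\im(f)$ does not depend on the chosen representative $f_{i}$. This rests on the fact that the transition maps of the pro-object $\Omega_{\lambda}$ are surjections, as $\Omega_{\lambda}$ is a pro-object of $\cE^{s}_{\lambda}$ (whose morphisms are surjections). Passing to a deeper representative replaces $f_{i}$ by $f_{i}\circ t$ for a transition surjection $t$, and precomposition with a surjection does not alter the image (in the factorization $f_i = i\circ p$ one has $f_i\circ t = i\circ(p\circ t)$ with $p\circ t$ still surjective). Hence $Y=\im(f)$ is a well-defined subobject of $X$, and $f\mapsto Y$ partitions $\fA(X)_{\lambda}$.

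It remains to identify each block and verify equivariance. For a fixed subobject $Y\subset X$ with inclusion $i_{Y}$, the morphisms $f$ with $\im(f)=Y$ are exactly the composites $i_{Y}\circ g$ for a surjection $g\colon\Omega_{\lambda}\twoheadrightarrow Y$ in $\Pro(\cE)$; the assignment $g\mapsto i_{Y}\circ g$ is a bijection from $\Surj_{\Pro(\cE)}(\Omega_{\lambda},Y)$ onto that block, with inverse the factorization above. When $Y$ has type $\lambda$ this surjection set is exactly $\fB(Y)$, and a surjection from $\Omega_{\lambda}$ forces $Y$ to have type $\lambda$; thus the $\lambda$-component of $\coprod_{Y\subset X}\fB(Y)$ is precisely $\coprod_{Y\subset X,\ \mathrm{type}(Y)=\lambda}\fB(Y)$, matching the partition. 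Finally, $\fG_{\lambda}=\Aut(\Omega_{\lambda})$ acts on $\fA(X)_{\lambda}$ by $f\mapsto f\circ\sigma$; since each $\sigma$ is a surjection, this preserves images, so the partition is $\fG_{\lambda}$-stable and on each block restricts to the defining action $g\mapsto g\circ\sigma$ on $\fB(Y)$. The resulting bijection is therefore an isomorphism of $\fM^{\circ}$-sets, and its naturality in $X$ follows from functoriality of the factorization.

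The main obstacle is precisely the well-definedness of the image subobject in $\Pro(\cE)$; once the surjectivity of the transition maps of $\Omega_{\lambda}$ is used, the rest is a direct application of the surjection--injection factorization together with the definition of $\fB$.
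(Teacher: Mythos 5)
Your proof is correct and takes essentially the same approach as the paper, whose one-line argument is precisely the identification $\Hom_{\Pro(\cE)}(\Omega_{\lambda}, X) = \coprod_{Y \subset X} \Surj_{\Pro(\cE)}(\Omega_{\lambda}, Y)$ obtained by sorting morphisms according to their image. You have merely spelled out the verifications the paper leaves implicit (well-definedness of the image in $\Pro(\cE)$ via the surjectivity of the transition maps, the type bookkeeping, and $\fG_{\lambda}$-equivariance), all of which are carried out correctly.
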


\begin{proof}
We have
\begin{displaymath}
\Hom_{\Pro(\cE)}(\Omega_{\lambda}, X) = \coprod_{Y \subset X} \Surj_{\Pro(\cE)}(\Omega_{\lambda}, Y),
\end{displaymath}
and so the result follows.
\end{proof}

\begin{proposition}
Consider morphisms $X \to Z$ and $Y \to Z$ in $\cE$. Then
\begin{displaymath}
\fA(X) \times_{\fA(Z)} \fA(Y) = \fA(X \times_Z Y).
\end{displaymath}
\end{proposition}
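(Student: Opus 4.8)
The plan is to reduce the statement to the left exactness of the functors $\Hom_{\Pro(\cE)}(\Omega_\lambda, -)$, checked separately for each type $\lambda$. First I would recall that $\fM$-sets form the functor category $[\fM, \Set]$, in which all limits---and in particular fiber products---are computed objectwise. Thus the $\lambda$-component of the left-hand side is
\begin{displaymath}
\bigl(\fA(X) \times_{\fA(Z)} \fA(Y)\bigr)_\lambda = \fA(X)_\lambda \times_{\fA(Z)_\lambda} \fA(Y)_\lambda,
\end{displaymath}
whereas the $\lambda$-component of the right-hand side is $\fA(X \times_Z Y)_\lambda = \Hom_{\Pro(\cE)}(\Omega_\lambda, X \times_Z Y)$. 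It therefore suffices to produce, for each $\lambda$, a bijection between these two sets that is natural in the pro-object $\Omega_\lambda$.

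The key point is that $\Hom_{\Pro(\cE)}(\Omega_\lambda, -)$, restricted to $\cE$, preserves finite limits. Writing $\Omega_\lambda = (\Omega_{\lambda,i})_{i \in \bN}$ as a pro-object, we have
\begin{displaymath}
\Hom_{\Pro(\cE)}(\Omega_\lambda, -) = \varinjlim_i \Hom_{\cE}(\Omega_{\lambda,i}, -),
\end{displaymath}
a filtered colimit of representable functors. Each representable functor into $\Set$ preserves fiber products, and filtered colimits commute with finite limits in $\Set$; hence $\Hom_{\Pro(\cE)}(\Omega_\lambda, -)$ preserves fiber products. Since $\cE$ is finitely complete, $X \times_Z Y$ exists in $\cE$, and the displayed computation shows it is carried to the fiber product of the component sets. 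This yields the required componentwise bijection.

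It remains to verify $\fM$-equivariance. The bijection at each $\lambda$ is the canonical isomorphism expressing the universal property of the fiber product, and is therefore natural in the source object of $\Pro(\cE)$. Because the transition maps of each $\fM$-set $\fA(-)$ are given by precomposition along the structure morphisms $\Omega_\mu \to \Omega_\lambda$, the componentwise bijections commute with these transitions and assemble into an isomorphism of $\fM$-sets. The only step requiring genuine care is the left exactness in the middle paragraph, where the specific nature of the pro-objects $\Omega_\lambda$---as opposed to honest objects of $\cE$---enters; everything else is formal, following from working in a functor category.
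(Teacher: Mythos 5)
Your proof is correct and takes essentially the same approach as the paper, which disposes of the statement in a single line (``this is simply the mapping property for fiber products''); your expansion---objectwise limits in the functor category of $\fM$-sets, plus the observation that $\Hom_{\Pro(\cE)}(\Omega_\lambda,-)=\varinjlim_i \Hom_{\cE}(\Omega_{\lambda,i},-)$ preserves finite limits because filtered colimits commute with finite limits in $\Set$---is precisely the formal content the paper is invoking. No changes needed.
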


\begin{proof}
This is simply the mapping property for fiber products.
\end{proof}

\begin{proposition} \label{prop:A-faithful}
The functor $X \mapsto \fA(X)$ is faithful.
\end{proposition}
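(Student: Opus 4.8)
The plan is to show that $\fA$ separates parallel morphisms by evaluating on a single well-chosen element. Suppose $f, g \colon X \to Y$ are morphisms in $\cE$ with $\fA(f) = \fA(g)$; I want to deduce $f = g$. The map $\fA(f) \colon \fA(X) \to \fA(Y)$ acts on the component $\fA(X)_{\lambda} = \Hom_{\Pro(\cE)}(\Omega_{\lambda}, X)$ by post-composition, sending $h \mapsto f \circ h$. Thus the hypothesis says exactly that $f \circ h = g \circ h$ for every $\lambda$ and every $h \colon \Omega_{\lambda} \to X$ in $\Pro(\cE)$. It therefore suffices to produce a single such $h$ that is right-cancellable against morphisms of $\cE$.

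First I would locate the element. Let $\mu$ be the type of $X$, so that $X$ is an object of $\cE^s_{\mu}$. Since $\Omega_{\mu}$ is universal for this category, $\fB(X) = \Surj_{\Pro(\cE)}(\Omega_{\mu}, X)$ is non-empty; pick a surjection $h \colon \Omega_{\mu} \to X$. Under the decomposition of Proposition~\ref{prop:resA}, $h$ lies in the summand $\fB(X)$ of $\fA(X)_{\mu}$, so the hypothesis $\fA(f) = \fA(g)$ gives $f \circ h = g \circ h$.

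The crux is then to cancel $h$. A map $\Omega_{\mu} \to X$ is represented by a morphism $\bar h \colon \Omega_{\mu,i} \to X$ for some index $i$, and being a surjection means $\bar h$ may be taken surjective in $\cE$. The equality $f \circ h = g \circ h$ in $\Hom_{\Pro(\cE)}(\Omega_{\mu}, Y) = \varinjlim_j \Hom_{\cE}(\Omega_{\mu,j}, Y)$ means that $f \bar h$ and $g \bar h$ agree after precomposition with some transition map $\Omega_{\mu,j} \to \Omega_{\mu,i}$. The essential observation is that all transition maps of $\Omega_{\mu}$ are surjections, since $\Omega_{\mu}$ is a pro-object of $\cE^s_{\mu}$, whose morphisms are surjections by definition; likewise $\bar h$ is a surjection. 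As surjections are epimorphisms in the regular category $\cE$, both are right-cancellable: cancelling the transition map gives $f \bar h = g \bar h$, and cancelling $\bar h$ gives $f = g$, as desired.

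The main obstacle is precisely this last step: one must be careful that the relevant cancellation happens at the level of honest morphisms in $\cE$ rather than appealing to $h$ being an epimorphism in $\Pro(\cE)$, which is a more delicate matter in general. Unwinding the colimit description of pro-morphisms, as above, reduces everything to the fact that surjections and the transition maps of $\Omega_{\mu}$ are epimorphisms in $\cE$, which is immediate.
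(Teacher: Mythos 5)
Your proof is correct and takes essentially the same route as the paper: choose a surjection $q \colon \Omega_{\lambda} \to X$, deduce $f \circ q = g \circ q$ from $\fA(f)=\fA(g)$, and cancel $q$. The paper simply asserts ``since $q$ is an epimorphism, $f=g$''; your unwinding of the filtered colimit $\Hom_{\Pro(\cE)}(\Omega_{\lambda}, Y)=\varinjlim_j \Hom_{\cE}(\Omega_{\lambda,j}, Y)$, reducing the cancellation to honest epimorphisms in $\cE$ (the surjective representative and the surjective transition maps), is a careful justification of exactly that step.
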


\begin{proof}
Let $f,g \colon X \rightrightarrows Y$ be given, and suppose $\fA(f)=\fA(g)$. Choose a surjection $q \colon \Omega_{\lambda} \to X$ for some $\lambda$. Then, by assumption, $f^*(q)=g^*(q)$. Since $q$ is an epimorphism, it follows that $f=g$, as required.
\end{proof}

We give $\fM$ the non-archimedean structure generated by the sets $\fA(X)$ for $X \in \cE$. By Proposition~\ref{prop:resA}, the induced non-archimedean structure on $\fM^{\circ}$ is the expected one, i.e., it is generated by the $\fB(X)$'s.

\begin{proposition}
$\fM$ is a pro-oligomorphic category.
\end{proposition}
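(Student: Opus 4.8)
The plan is to verify the four defining conditions of a pro-oligomorphic category one at a time; the first three are essentially formal, and the last is where the real content lies. Condition (a) is immediate: the objects of $\fM$ are the elements of the finite index set $\Lambda$. For condition (b), by construction $\Aut_\fM(\lambda)=\fG_\lambda$, which was shown to be pro-oligomorphic in \S\ref{s:embed1}; moreover, as observed right after Proposition~\ref{prop:resA}, the non-archimedean structure that $\fM$ induces on $\fG_\lambda$ is generated by the $\fB(X)$'s, hence coincides with the structure built in the previous section.

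For the Hausdorff condition, suppose $\sigma\neq\tau$ in $\Hom_\fM(\lambda,\mu)=\Hom_{\Pro(\cE)}(\Omega_\mu,\Omega_\lambda)$. Writing $\Omega_\lambda=((\Omega_\lambda)_i)_{i\in\bN}$, a morphism into the pro-object $\Omega_\lambda$ is determined by its composites with the structure projections $\pi_i\colon\Omega_\lambda\to(\Omega_\lambda)_i$, so $\pi_i\circ\sigma\neq\pi_i\circ\tau$ for some $i$. Taking $W=(\Omega_\lambda)_i$, an object of $\cE$ of type $\lambda$, and $x=\pi_i\in\fA(W)_\lambda=\Hom(\Omega_\lambda,W)$, the action of $\fM$ on $\fA(W)$ is by precomposition, so $\sigma x=\pi_i\circ\sigma\neq\pi_i\circ\tau=\tau x$. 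Since $\fA(W)$ is smooth, this separates $\sigma$ and $\tau$.

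The substantive step is condition (c): every finitary smooth $\fM$-set $\fZ$ restricts to a finitary $\fG_\lambda$-set $\fZ_\lambda$. The key preliminary observation is that the generating class $\{\fA(X)\}_{X\in\cE}$ is \emph{closed under finite products}: setting $Z=\bone$ in the identity $\fA(X)\times_{\fA(Z)}\fA(Y)=\fA(X\times_Z Y)$ proved just above, and noting $\fA(\bone)_\lambda=\Hom(\Omega_\lambda,\bone)$ is a point, gives $\fA(X)\times\fA(Y)=\fA(X\times Y)$. Consequently the minimal non-archimedean structure containing the $\fA(X)$'s is obtained by closing \emph{only} under subquotients and unions, so I would first check that the class of unions of $\fM$-subsets each of which is a subquotient of some $\fA(X)$ is already closed under all three operations, and therefore equals the whole structure. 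By Proposition~\ref{prop:resA}, $\fA(X)_\lambda=\coprod_{Y\subset X,\ \mathrm{type}(Y)=\lambda}\fB(Y)$ is a finite disjoint union of transitive $\fG_\lambda$-sets, hence finitary over $\fG_\lambda$; and since restriction to level $\lambda$ carries $\fM$-subquotients to $\fG_\lambda$-subquotients, the $\lambda$-component of any subquotient of $\fA(X)$ is again finitary.

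Finally I would combine these with finitary generation. If $\fZ$ is generated by $z_1,\dots,z_n$ and we write $\fZ=\bigcup_i\fZ_i$ with each $\fZ_i$ a subquotient of some $\fA(X_i)$, then each $z_k$ lies in some $\fZ_{i(k)}$, whence $\fZ=\bigcup_k\fM z_k\subseteq\fZ_{i(1)}\cup\cdots\cup\fZ_{i(n)}$; restricting to level $\lambda$ exhibits the $\fG_\lambda$-stable set $\fZ_\lambda$ inside a finite union of finitary $\fG_\lambda$-sets, so $\fZ_\lambda$ is finitary. I expect condition (c) to be the main obstacle, and the delicate point — the reason for isolating the product identity — is that this route never forms a product of two $\fG_\lambda$-sets, so it sidesteps the (true but separate) fact that products of finitary smooth $\fG_\lambda$-sets are finitary; the genuinely essential inputs are the product identity for the $\fA$'s and the orbit decomposition of Proposition~\ref{prop:resA}.
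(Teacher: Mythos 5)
Your proof is correct and follows essentially the same route as the paper: the Hausdorff property is verified exactly as there, by separating two pro-object morphisms at a finite level $\pi_i \colon \Omega_\lambda \to (\Omega_\lambda)_i$ and letting them act by precomposition on $\fA\bigl((\Omega_\lambda)_i\bigr)$, and condition (c) is deduced from Proposition~\ref{prop:resA}. The only difference is one of detail: where the paper dispatches (c) in a single sentence citing Proposition~\ref{prop:resA}, you make explicit the closure argument that this citation tacitly requires — namely that $\fA(X) \times \fA(Y) = \fA(X \times Y)$ (the fiber-product identity with $Z = \bone$), so the generated non-archimedean structure consists of unions of subquotients of $\fA(X)$'s, and a finitary set sits inside finitely many of these — which is a correct and genuinely needed filling-in of the paper's terse step rather than a different method.
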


\begin{proof}
We first show that the non-archimedean structure on $\fM$ is Hausdorff. Let $f,g \in \Hom_{\fM}(\lambda, \mu)$ be distinct. From how pro-objects work, there is a surjection $q \colon \Omega_{\lambda} \to X$ such that $q \circ f \ne q \circ g$. We thus see that $f^*,g^* \colon \fA(X)_{\lambda} \to \fA(X)_{\mu}$ act differently on $q$, as required. We already know that the groups $\fG_{\lambda}$ are pro-oligomorphic. Proposition~\ref{prop:resA} shows that a finitary smooth $\fM$-set restricts to a finitary $\fM^{\circ}$-set, which completes the proof.
\end{proof}

\begin{remark}
We note that the $\fA$ and $\fB$ sets, and the relationship between them, was observed in \cite[\S 3]{cantor} in the case where $\cE$ is the category of finite sets; the $Y(n)$ sets there correspond to our $\fA$'s, while then $X(n)$'s correspond to our $\fB$'s.
\end{remark}

\subsection{Classification of subobjects}

Let $X$ be an object of $\cE$. By Proposition~\ref{prop:resA}, we see that the $\fM^{\circ}$-subsets of $\fA(X)$ correspond to arbitrary subsets $\fc$ of $\Sub(X)$. We write $\fA(\fc)$ for the $\fM^{\circ}$-subset corresponding to $\fc$. We can characterize this correspondence as follows. First, $\fA(\fc)_{\lambda}$ consists of all maps $f \colon \Omega_{\lambda} \to X$ whose image belongs to $\fc$. And second, if $\fZ$ is a $\fM^{\circ}$-subset of $\fA(X)$ then $\fZ=\fA(\fc)$, where $\fc$ consists of all subobjects of $X$ that occur as the image of some morphism in $\fZ$. We say that $\fc$ is an \defn{ideal} of $\Sub(X)$ if it is downwards closed, i.e., $Y \in \fc$ and $Y' \subset Y$ implies $Y' \in \fc$.

\begin{proposition} \label{prop:A-sub}
$\fA(\fc)$ is a $\fM$-subset of $\fA(X)$ if and only if $\fc$ is an ideal of $\Sub(X)$.
\end{proposition}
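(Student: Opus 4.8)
The plan is to reduce the whole statement to one monotonicity fact about images together with a realizability construction for the $\fM$-action. Recall first how $\fM$ acts: a morphism $\psi \in \Hom_{\fM}(\lambda,\mu) = \Hom_{\Pro(\cE)}(\Omega_{\mu},\Omega_{\lambda})$ carries $f \in \fA(X)_{\lambda} = \Hom_{\Pro(\cE)}(\Omega_{\lambda}, X)$ to the precomposite $f \circ \psi \in \fA(X)_{\mu}$, and by the description recalled just before the statement we have $\fA(\fc)_{\lambda} = \{f : \im(f) \in \fc\}$. The first thing I would record is that precomposition can only shrink images, i.e. $\im(f \circ \psi) \subseteq \im(f)$ for every pro-morphism $\psi$: factoring $f = i \circ g$ with $g$ surjective and $i \colon \im(f) \hookrightarrow X$ injective, the composite $f \circ \psi = i \circ (g \circ \psi)$ has the same image as $g \circ \psi \colon \Omega_{\mu} \to \im(f)$ (because $i$ is injective), and this image is a subobject of $\im(f)$.

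Granting this, the backward implication is immediate. If $\fc$ is an ideal and $f \in \fA(\fc)_{\lambda}$, then for any $\psi$ we have $\im(f \circ \psi) \subseteq \im(f) \in \fc$, whence $\im(f \circ \psi) \in \fc$ by downward-closedness, so $f \circ \psi \in \fA(\fc)_{\mu}$. Thus $\fA(\fc)$ is stable under all of $\fM$.

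For the forward implication I must produce, for each containment $Y' \subseteq Y$ of subobjects of $X$, a morphism of $\fM$ that carries a map with image $Y$ to a map with image exactly $Y'$. Let $\lambda$ be the type of $Y$ and $\lambda'$ the type of $Y'$, fix a surjection $g \colon \Omega_{\lambda} \to Y$ (an element of $\fB(Y)$), and put $f = \iota \circ g$ with $\iota \colon Y \hookrightarrow X$, so that $\im(f) = Y$ and $f \in \fA(\fc)_{\lambda}$. Set $P = g^{-1}(Y') = \Omega_{\lambda} \times_Y Y'$, a pro-subobject of $\Omega_{\lambda}$ of type $\lambda'$; its projection $P \hookrightarrow \Omega_{\lambda}$ is injective (base change of the injection $Y' \hookrightarrow Y$) and its projection $\pi \colon P \to Y'$ is a surjection (base change of $g$, using stability of surjections under base change), with $g|_P = \iota' \circ \pi$ where $\iota' \colon Y' \hookrightarrow Y$. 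Choosing a surjection $h \colon \Omega_{\lambda'} \to Y'$ and a lift $c \colon \Omega_{\lambda'} \to P$ of $h$ through $\pi$ (so $\pi \circ c = h$), the morphism $\psi = (P \hookrightarrow \Omega_{\lambda}) \circ c$ satisfies $f \circ \psi = \iota \circ \iota' \circ h$, whose image is exactly $Y'$. Then $\fM$-stability of $\fA(\fc)$ forces $f \circ \psi \in \fA(\fc)_{\lambda'}$, i.e. $Y' = \im(f \circ \psi) \in \fc$; as $Y' \subseteq Y$ was arbitrary, $\fc$ is an ideal.

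The one step requiring genuine work is the lift of $h$ through the pro-object projection $\pi \colon P \to Y'$, and this is where the argument rests. I would obtain it from the f-projectivity of the universal homogeneous pro-object $\Omega_{\lambda'}$, applied level-by-level: writing $P = \varprojlim_n P_n$ with $P_n \in \cE^s_{\lambda'}$ and surjective transition maps (possible since our countability hypothesis lets us index by $\bN$, and the $P_n$ are the fiber products $(\Omega_\lambda)_n \times_Y Y'$), one starts from $h$, lifts successively through each surjection $P_{n+1} \to P_n$ using f-projectivity inside the atomic category $\cE^s_{\lambda'}$, and assembles the resulting compatible system into $c \colon \Omega_{\lambda'} \to P$. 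This is precisely the pro-object form of f-projectivity underlying the construction of $\Omega_{\lambda'}$ in \S\ref{ss:atomic}.
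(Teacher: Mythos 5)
Your proof is correct and follows essentially the same route as the paper's: the easy direction via monotonicity of images under precomposition, and the forward direction via the same commutative diagram built from the fiber product $\Omega_{\lambda} \times_Y Y'$ and a lift of a chosen surjection $\Omega_{\lambda'} \to Y'$ through the base-changed surjection. The only difference is that where the paper cites \cite[Proposition~A.7]{homoten} for this lift, you unwind that citation by proving it level-by-level from f-projectivity of $\Omega_{\lambda'}$ (using the $\bN$-indexing guaranteed by the countability hypothesis), which is a proof of the quoted fact rather than a genuinely different argument.
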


\begin{proof}
Suppose $\fc$ is an ideal of $\Sub(X)$. We show that $\fA(\fc)$ is a $\fM$-subset of $\fA(X)$. Let $f \colon \Omega_{\lambda} \to X$ be given that belongs to $\fA(\fc)$, meaning the image of $f$ is in $\fc$. Given a morphism $g \colon \Omega_{\mu} \to \Omega_{\lambda}$, the image of $g^*(f)$ is contained in the image of $f$, and therefore belongs to $\fc$, and so $g^*(f)$ belongs to $\fA(\fc)$ as required.

Now suppose that $\fA(\fc)$ is a $\fM$-subset of $\fA(X)$. We show that $\fc$ is an ideal of $\Sub(X)$. Let $Y \in \fc$ and $Y' \subset Y$ be given. We can find a commutative diagram
\begin{displaymath}
\xymatrix@C=3em{
\Omega_{\mu} \ar[r]^h \ar[d]_{g'} & \Omega_{\lambda} \ar[d]^g \ar[rd]^f \\
Y' \ar[r]^j & Y \ar[r]^i & X}
\end{displaymath}
where $g$ and $g'$ are surjective, and $i$ and $j$ are the given inclusions. Indeed, first choose $g$ and $g'$ arbitrarily, and define $f=i \circ g$. Let $\Omega'=\Omega_{\lambda} \times_Y Y'$. Since $g$ is surjective, so too is the natural map $\Omega' \to Y'$, and so it is a morphism in $\cE^s_{\mu}$. We can thus lift $g'$ through this surjection, which gives the map $h$. (Here we are using the fact that $\Omega_{\mu}$ is a universal homogeneous pro-object in $\cE^s_{\mu}$ indexed by $\bN$, and thus maps to any pro-object in $\cE^s_{\mu}$ indexed by $\bN$; see \cite[Proposition~A.7]{homoten}.) Since $f$ belongs to $\fA(\fc)$, so does $h^*(f)$, and so $Y'$ belongs to $\fc$, as required.
\end{proof}

\begin{corollary} \label{cor:Bgen}
The $\fM$-set $\fA(X)$ is generated by $\fB(X) \subset \fA(X)$.
\end{corollary}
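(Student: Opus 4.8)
The plan is to deduce this directly from the classification of $\fM$-subsets obtained in Proposition~\ref{prop:A-sub}, so that essentially no new work is required. Recall that the $\fM$-subset of $\fA(X)$ generated by a subset $S$ is by definition the smallest $\fM$-stable subset containing $S$, i.e.\ the intersection of all $\fM$-subsets of $\fA(X)$ that contain $S$. First I would record that, by Proposition~\ref{prop:A-sub}, the $\fM$-subsets of $\fA(X)$ are exactly the sets $\fA(\fc)$ as $\fc$ ranges over the ideals of $\Sub(X)$, and that an arbitrary intersection satisfies $\bigcap_i \fA(\fc_i) = \fA(\bigcap_i \fc_i)$, with $\bigcap_i \fc_i$ again an ideal (an intersection of downward-closed sets is downward-closed). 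Consequently the $\fM$-subset generated by $\fB(X)$ equals $\fA(\fc_0)$, where $\fc_0$ is the smallest ideal of $\Sub(X)$ containing every subobject that $\fB(X)$ forces into $\fc$.

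Second, I would pin down exactly which ideals $\fc$ satisfy $\fB(X) \subseteq \fA(\fc)$. Writing $\lambda$ for the type of $X$, the set $\fB(X)$ consists precisely of those maps $\Omega_{\lambda} \to X$ whose image is all of $X$ (this is the meaning of surjectivity in $\Pro(\cE)$), whereas $\fA(\fc)_{\lambda}$ consists of the maps whose image lies in $\fc$. Hence $\fB(X) \subseteq \fA(\fc)$ holds if and only if $X \in \fc$. I would note that $\fB(X)$ is non-empty, since $\Omega_{\lambda}$ is universal in $\cE^s_{\lambda}$, so this condition is genuinely imposed (the empty ideal does not qualify).

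Finally, I would observe that the smallest ideal of $\Sub(X)$ containing $X$ is $\Sub(X)$ itself: every subobject of $X$ lies below $X$, so downward closure forces it into any ideal containing $X$, and $\Sub(X)$ is itself an ideal. Therefore $\fc_0 = \Sub(X)$, giving that the $\fM$-subset generated by $\fB(X)$ is $\fA(\Sub(X)) = \fA(X)$, which is the claim.

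There is no real obstacle here: all of the substance has been front-loaded into Proposition~\ref{prop:A-sub}, and the corollary reduces to the trivial lattice-theoretic fact that the top element of $\Sub(X)$ generates the whole poset as an ideal. One could instead argue directly, factoring an arbitrary $f \colon \Omega_{\lambda} \to X$ as a surjection onto $X$ precomposed with a morphism in $\fM$; but executing this would re-run the lifting argument already used in the proof of Proposition~\ref{prop:A-sub} (using universality and $f$-projectivity of the $\Omega_{\mu}$), so the ideal computation above is the cleaner route.
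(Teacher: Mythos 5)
Your proof is correct and follows the same route as the paper, which disposes of the corollary in one line: by Proposition~\ref{prop:A-sub}, the $\fM$-subset generated by $\fB(X)$ is $\fA(\fc)$ for an ideal $\fc$ containing $X$ (since elements of $\fB(X)$ have image exactly $X$), and the only such ideal is $\Sub(X)$. Your additional checks (intersections of ideals are ideals, $\fB(X) \neq \emptyset$) are fine but merely make explicit what the paper leaves implicit.
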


\begin{proof}
Indeed, the only ideal of $\Sub(X)$ containing $X$ is $\Sub(X)$ itself.
\end{proof}

\begin{corollary} \label{cor:Aquot}
Every smooth $\fM$-set is a quotient of a disjoint union of $\fA(X)$'s.
\end{corollary}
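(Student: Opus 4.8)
The plan is to identify the smooth $\fM$-sets by a minimality argument. I would introduce the class $\cC$ of all $\fM$-sets isomorphic to a quotient of a disjoint union $\coprod_i \fA(X_i)$, with the $X_i$ objects of $\cE$, and then argue that $\cC$ itself satisfies conditions (a)--(c) of Definition~\ref{defn:nonarch}. Since $\hat{\bS}(\fM)$ is by construction the \emph{minimal} non-archimedean structure containing every $\fA(X)$, and since $\cC$ visibly contains each $\fA(X)$, this would give $\hat{\bS}(\fM) \subseteq \cC$, which is exactly the claim.

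Three of the closure properties should be routine. For condition (c), I would use the product formula $\fA(X) \times \fA(Y) = \fA(X \times Y)$ (the case $Z = \bone$ of the fiber-product proposition) together with the distributivity of products over disjoint unions of $\fM$-sets to write a product of two members of $\cC$ as a quotient of $\coprod_{i,j} \fA(X_i \times Y_j)$, the empty product being the terminal set $\fA(\bone) \in \cC$. Closure under quotients is immediate, as a quotient of a quotient of $\coprod_i \fA(X_i)$ is again such a quotient. For the unions in condition (b), given sub-$\fM$-sets $W_i$ each presented as a quotient of a disjoint union $D_i$ of $\fA$'s, I would observe that the fold map $\coprod_i W_i \to \bigcup_i W_i$ is surjective, so $\bigcup_i W_i$ is a quotient of $\coprod_i D_i$.

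The essential step, and the one I expect to be the main obstacle, is closure under passing to sub-$\fM$-sets (which, with closure under quotients, yields the full subquotient condition (a)). Given $Z \in \cC$ with a chosen surjection $q \colon \coprod_i \fA(X_i) \to Z$ and a sub-$\fM$-set $Z' \subseteq Z$, the map $q$ restricts to a surjection $q^{-1}(Z') \to Z'$, and $q^{-1}(Z') = \coprod_i \big( q^{-1}(Z') \cap \fA(X_i) \big)$; this reduces the whole corollary to showing that every sub-$\fM$-set of a single $\fA(X)$ lies in $\cC$. Here I would invoke Proposition~\ref{prop:A-sub}: such a sub-$\fM$-set is $\fA(\fc)$ for an ideal $\fc$ of the finite poset $\Sub(X)$, so $\fc$ is the union of the principal ideals generated by its finitely many maximal elements $Y_1,\dots,Y_m$. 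Identifying, via the monomorphism $Y_j \hookrightarrow X$, the maps $\Omega_\lambda \to X$ with image inside $Y_j$ with the set $\fA(Y_j)$, one gets $\fA(\fc) = \bigcup_{j=1}^m \fA(Y_j)$, a finite union of embedded copies, and the fold map realizes it as a quotient of $\coprod_j \fA(Y_j)$. Thus $\fA(\fc) \in \cC$, and the verification is complete. Everything therefore rests on the classification of $\fM$-subsets of $\fA(X)$ from Proposition~\ref{prop:A-sub}; the remaining properties are formal manipulations with coproducts and surjections.
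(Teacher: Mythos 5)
Your proof is correct and takes essentially the same route as the paper: the paper likewise reduces (implicitly, via minimality of the generated non-archimedean structure) to showing that every $\fM$-subset of an $\fA(X)$ is of the stated form, and then applies Proposition~\ref{prop:A-sub} to obtain the natural surjection $\coprod_{Y \in \fc} \fA(Y) \to \fA(\fc)$. Your summing over only the maximal elements of the ideal $\fc$, and your explicit verification of the closure axioms (a)--(c), are just more spelled-out versions of the same argument.
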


\begin{proof}
It suffices to show that every $\fM$-subset of $\fA(Y)$ is of the stated form. This follows from the previous proposition: indeed, we have a natural surjection $\coprod_{Y \in \fc} \fA(Y) \to \fA(\fc)$.
\end{proof}

\subsection{Monogenic sets}

We say that a smooth $\fM$-set is \defn{monogenic} if it is generated by a single element, or equivalently, by a single $\fM^{\circ}$-orbit.

\begin{proposition}
Let $\fZ$ be a monogenic smooth $\fM$-set. Then there is an object $X$ of $\cE$, a subgroup $\Gamma$ of $\Aut(X)$, and a surjection of $\fM$-sets $\fA(X)/\Gamma \to \fZ$ that is injective on $\fB(X)/\Gamma$.
\end{proposition}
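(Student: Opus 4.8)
The plan is to build the surjection by first producing an $\fM$-map out of $\fA(X)$ itself and then dividing by $\Gamma$. Since $\fZ$ is monogenic, fix a generator $z \in \fZ_{\lambda}$; its $\fM^{\circ}$-orbit $\fG_{\lambda} z$ is a transitive smooth $\fG_{\lambda}$-set, so by Proposition~\ref{prop:Gclass} it is isomorphic to $\fB(X)/\Gamma$ for some $X$ of type $\lambda$ and some $\Gamma \subset \Aut(X)$; I would take $X$ minimal, as in the proof of that proposition, and fix a base surjection $q_0 \colon \Omega_{\lambda} \to X$ representing the class of $z$, so that $z$ corresponds to the class of $q_0$ in $\fB(X)/\Gamma$. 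By Corollary~\ref{cor:Bgen}, $\fA(X)$ is generated by the single element $q_0$, and every element of $\fA(X)_{\mu}$ has the form $q_0 \circ \eta$ for some $\eta \in \Hom_{\fM}(\lambda,\mu)$ (using that $q_0$ is a surjection and the $\Omega$'s are f-projective). Hence any $\fM$-map $\phi_0 \colon \fA(X) \to \fZ$ with $\phi_0(q_0)=z$ is forced to send $q_0 \circ \eta \mapsto \eta \cdot z$, which gives uniqueness immediately, and makes surjectivity automatic from $\fZ = \fM \cdot z$.

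Granting that such a $\phi_0$ exists, the division by $\Gamma$ is then routine. For $\gamma \in \Gamma$ the surjection $\gamma \circ q_0$ again lies in $\fB(X) = \{q_0 \circ \delta : \delta \in \fG_{\lambda}\}$, so $\gamma \circ q_0 = q_0 \circ \delta_{\gamma}$ for some $\delta_{\gamma} \in \fG_{\lambda}$; computing in $\fB(X)/\Gamma$ gives $q_0\circ\delta_\gamma^{-1}=\gamma^{-1}\circ q_0$, whence $\delta_{\gamma}\cdot z = z$. Therefore $\phi_0(\gamma \cdot f)=\phi_0(f)$ for all $f$, and $\phi_0$ factors through a surjection $\phi \colon \fA(X)/\Gamma \to \fZ$. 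On the top stratum, $\phi$ is a $\fG_{\lambda}$-equivariant map of transitive sets $\fB(X)/\Gamma \to \fG_{\lambda}z$ carrying the class of $q_0$ to $z$; since these two transitive sets are identified compatibly with their base points, $\phi$ is injective on $\fB(X)/\Gamma$, as required.

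The real content, and what I expect to be the main obstacle, is the existence of $\phi_0$, i.e.\ the well-definedness of $q_0 \circ \eta \mapsto \eta \cdot z$: one must show that $q_0 \circ \eta = q_0 \circ \eta'$ (for $\eta,\eta' \in \Hom_{\fM}(\lambda,\mu)$) forces $\eta \cdot z = \eta' \cdot z$ in $\fZ_{\mu}$. This does not follow formally, because the relation defining $\fA(X)$ (two lifts of a map through $q_0$) is \emph{not} generated, as an $\fM$-stable equivalence relation, by its restriction to the top stratum $\fB(X)$. My plan is to use the smoothness of $\fZ$: via Corollary~\ref{cor:Aquot} present $\fZ$ as a quotient of a single $\fA(W)$ with the chosen generator lying in $\fB(W)$ (cutting down to one summand and replacing $W$ by the image of the generator), identify the induced top-stratum relation on $\fB(W)$, using the classification of equivalence relations in \S\ref{ss:trans} together with Proposition~\ref{prop:B-prod}, with the relation pulled back from $\fB(X)/\Gamma$ along a surjection $\rho\colon W\to X$, and then propagate this relation off the top stratum by lifting the given pair $(\eta,\eta')$ to $\fB(W)$ through f-projectivity and comparing the ample subobjects of $W \times_X W$ that its image determines. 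The minimality of $X$ — equivalently, the simple connectivity provided by Proposition~\ref{prop:sc} — should be exactly what rules out a residual automorphism in this comparison; I anticipate that controlling this propagation from the top stratum to the lower strata is the crux of the whole argument.
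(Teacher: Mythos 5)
Your reduction scaffolding is fine: \emph{granting} an $\fM$-map $\phi_0 \colon \fA(X) \to \fZ$ with $\phi_0(q_0)=z$, the descent through $\Gamma$ and the injectivity on $\fB(X)/\Gamma$ go through as you say, and surjectivity is automatic since $z$ generates. But the existence of $\phi_0$ --- the well-definedness of $q_0 \circ \eta \mapsto \eta \cdot z$ --- is the entire content of the proposition, and you do not prove it; you only announce a plan. What you need is precisely an instance of the induction isomorphism $\Hom_{\fM}(\fA(X), \fZ) = \Hom_{\fM^{\circ}}(\fB(X), \fZ)$ of Proposition~\ref{prop:Ind-B}, which the paper proves \emph{after} this proposition and by means of it, so it cannot be quoted here (you rightly do not quote it, but you also supply no substitute). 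Moreover, the mechanism your plan points to is not the right one: since one only needs a \emph{containment} of relations (the kernel of a presentation of $\fZ$ must contain the relation coming from $X$), the twisted components of the top-stratum kernel are harmless and no ``residual automorphism'' comparison ever arises; neither the minimality of $X$ nor simple connectivity plays the role you assign it. So as written there is a genuine gap at the acknowledged crux.

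The gap is fixable, and the fix is the ideal-theoretic propagation your sketch circles around without landing on, namely Proposition~\ref{prop:A-sub}. In your notation: take $g \colon \fA(W) \to \fZ$ surjective with $w \in \fB(W)$, $g(w)=z$, and lift the top-stratum map $\fB(W) \to \fB(X)/\Gamma$ to $\rho \colon W \to X$ with $\rho \circ w = q_0$ (here simple connectivity of $\fB(W)$ is genuinely used, to split the $\Gamma$-cover). The kernel $\fR = \fA(W) \times_{\fZ} \fA(W)$ is an $\fM$-subset of $\fA(W \times W)$, so by Proposition~\ref{prop:A-sub} it equals $\fA(\fc)$ for an ideal $\fc \subset \Sub(W \times W)$; since $g \vert_{\fB(W)}$ factors through $\fB(X)$, the kernel contains $\fB(W) \times_{\fB(X)} \fB(W)$, which by Proposition~\ref{prop:B-prod} contains $\fB(W \times_X W)$, so $W \times_X W \in \fc$ and hence every subobject of $W \times_X W$ lies in $\fc$. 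Now if $q_0\eta = q_0\eta'$ then $(w\eta, w\eta') \colon \Omega_{\mu} \to W \times W$ factors through $(\rho \times \rho)^{-1}(\Delta_X) = W \times_X W$, so $(w\eta, w\eta') \in \fR$ and $\eta z = g(w\eta) = g(w\eta') = \eta' z$; no minimality is needed at this point. Note that the paper proceeds in the opposite direction and thereby avoids the extension problem altogether: it chooses the presentation $f \colon \fA(X) \to \fZ$ with $X$ minimal with respect to factoring through $\fA(Y)$ for proper quotients $Y$ of $X$ (not minimal for the orbit, as in your setup), shows $f \vert_{\fB(X)}$ is finite-to-one via the same Proposition~\ref{prop:A-sub} propagation (the $\fM$-set generated by $\fB(X \times_Y X)$ is $\fA(X \times_Y X) = \fA(X) \times_{\fA(Y)} \fA(X)$), and only then extracts $\Gamma$ and descends, using Corollary~\ref{cor:Bgen} exactly as you do.
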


\begin{proof}
Since $\fZ$ is monogenic, it is a quotient of some $\fA(X)$ (Corollary~\ref{cor:Aquot}). Choose a surjection $f \colon \fA(X) \to \fZ$ with $X$ minimal, in the sense that $f$ does not factor through $\fA(Y)$ for any proper quotient $Y$ of $X$; this is possible since $X$ has finitely many quotients by \S \ref{ss:fp}(b). Let $\fR=\fA(X) \times_{\fZ} \fA(X)$ be the equivalence relation corresponding to this quotient, and let $\fR_0=\fB(X) \times_{\fZ} \fB(X)$.

We claim that $f \colon \fB(X) \to \fZ$ is finite-to-one. Indeed, suppose this is not the case. Then, by the classification of transitive smooth $\fM^{\circ}$-sets (Proposition~\ref{prop:Gclass}), there is a proper quotient $X \to Y$ such that $f \vert_{\fB(X)}$ factors through $\fB(Y)$. Thus $\fR_0$ contains $\fB(X) \times_{\fB(Y)} \fB(X)$, and, in particular, $\fB(X \times_Y X)$. It follows that $\fR$ contains $\fB(X \times_Y X)$ as well, and thus the $\fM$-set generated by this orbit, which is $\fA(X \times_Y X)=\fA(X) \times_{\fA(Y)} \fA(X)$ by Proposition~\ref{prop:A-sub}. We thus see that $f$ factors through $\fA(Y)$, a contradiction. This proves the claim.

Now, by again appealing to the classification of transitive smooth $\fM^{\circ}$-sets, we see that the image $\fB(X)$ under $f$ has the form $\fB(X)/\Gamma$ for some subgroup $\Gamma \subset \Aut(X)$. For $\sigma \in \Gamma$, the maps $f$ and $f\sigma$ agree on $\fB(X)$, and thus on all of $\fA(X)$ since $\fB(X)$ generates $\fA(X)$ as an $\fM$-set (Corollary~\ref{cor:Bgen}). We thus see that $f$ factors through $\fA(X)/\Gamma$, and the induced map $\fA(X)/\Gamma \to \fZ$ is injective on $\fB(X)/\Gamma$. This completes the proof.
\end{proof}

\begin{proposition}
Let $\fZ$ be a monogenic smooth $\fM$-set. Then there is a unique $\fM^{\circ}$-orbit on $\fZ$ that generates $\fZ$ as an $\fM$-set.
\end{proposition}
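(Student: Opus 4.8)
The plan is to exploit the \emph{monotonicity principle}: the $\fM$-action can only shrink images. Concretely, if $f \in \fA(X)_{\lambda} = \Hom_{\Pro(\cE)}(\Omega_{\lambda}, X)$ and $g \in \Hom_{\fM}(\mu,\lambda)$, then $\im(f \circ g) \subseteq \im(f)$, as observed in the proof of Proposition~\ref{prop:A-sub}. For the setup, let $p \colon \fA(X) \to \fZ$ be the surjection furnished by the previous proposition (composing with the quotient by $\Gamma$), where $X$ is chosen minimal, meaning $p$ does not factor through $\fA(W)$ for any proper quotient $W$ of $X$ (possible since $X$ has finitely many quotients, \S\ref{ss:fp}(b)). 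Since $\fB(X)$ generates $\fA(X)$ (Corollary~\ref{cor:Bgen}) and $p$ is surjective, the orbit $O_0 := p(\fB(X))$ generates $\fZ$; this settles existence. Note also that every $\fM^{\circ}$-orbit is supported at a single object $\lambda$, since $\fM^{\circ}$ has no morphisms between distinct objects. It remains to prove that $O_0$ is the \emph{only} generating orbit.

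The decisive step, which I expect to be the main obstacle, is the following \textbf{isolation claim}: every lift of an element of $O_0$ has full image, i.e. $p^{-1}(O_0) = \fB(X)$. To prove it, suppose $F \in \fA(X)_{\lambda}$ lifts some $z_0 \in (O_0)_{\lambda}$ with $\im F = Y \subsetneq X$, and pick $f_0 \in \fB(X)_{\lambda}$ with $p(f_0) = z_0$. Then $(f_0, F)$ lies in the kernel pair $\fR = \fA(X) \times_{\fZ} \fA(X)$, which is an effective equivalence relation and an $\fM$-subset of $\fA(X) \times \fA(X) = \fA(X \times X)$; by Proposition~\ref{prop:A-sub}, $\fR = \fA(\fc)$ for an ideal $\fc \subseteq \Sub(X \times X)$, which is transpose-closed (as $\fR$ is symmetric) and, by the transitivity argument of Lemma~\ref{lem:Gclass-1}(c), closed under relational composition. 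The subobject $W := \im(f_0, F)$ lies in $\fc$ with $p_1(W) = X$ and $p_2(W) = Y$, so $W^t \circ W \in \fc$, and elementwise $W^t \circ W = \{(f_0(\omega), f_0(\omega')) : F(\omega) = F(\omega')\}$. If $W^t \circ W \supsetneq \Delta_X$, then the equivalence relation it generates is a member of $\fc$ properly containing $\Delta_X$, hence (by exactness) an effective relation $E$ with $\fA(E) \subseteq \fR$, forcing $p$ to factor through the proper quotient $X/E$ and contradicting minimality. If instead $W^t \circ W = \Delta_X$, then the kernel pair of $F$ is contained in that of $f_0$, so $f_0$ factors as $f_0 = r \circ F$ with $r \colon Y \to X$ surjective; but $Y \subsetneq X$ is a proper subobject, and in a finitely-powered regular category no proper subobject surjects onto the whole object (the injections $\Sub(X) \hookrightarrow \Sub(Y)$, from pulling back along $r$, and $\Sub(Y) \hookrightarrow \Sub(X)$ force $Y = X$). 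This contradiction proves the claim.

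Granting the isolation claim, uniqueness follows by a round-trip argument. Let $O_1$ be any generating orbit, supported at $\lambda_1$, and suppose $O_1 \neq O_0$. Since $O_1$ generates, some $z_0 \in O_0$ equals $g \cdot z_1$ for $z_1 \in O_1$ and $g \in \Hom_{\fM}(\lambda_1, \lambda_0)$; since $O_0$ generates, $z_1 = h \cdot z_0'$ for some $z_0' \in O_0$ and a morphism $h$. Lifting $z_0'$ to some $f' \in \fB(X)$ and pushing forward, $F := f' \circ h \circ g$ is a lift of $z_0$ with $\im F \subseteq \im(f' \circ h) \subseteq \im f' = X$. By the isolation claim $\im F = X$, whence $\im(f' \circ h) = X$ as well, so $f' \circ h \in \fB(X)$ and $z_1 = p(f' \circ h) \in p(\fB(X)) = O_0$. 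As distinct orbits are disjoint, $O_1 = O_0$, the desired contradiction. The routine parts are the monotonicity bookkeeping and the round trip; the real content is the isolation claim, whose two-case analysis rests on the $\fM$-stability of the kernel pair together with the rigidity of finitely-powered categories.
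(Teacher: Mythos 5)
Your proof is correct, and it diverges from the paper's at the decisive step. Both arguments share the same skeleton: existence of the generating orbit $O_0 = p(\fB(X))$, an isolation statement asserting that no non-surjective element of $\fA(X)$ maps into $O_0$ (your $p^{-1}(O_0)=\fB(X)$ is equivalent to the paper's claim that the image $\fZ_0$ of $\bigcup_{Y \subsetneq X} \fB(Y)$ does not contain $\fZ'=O_0$, since $\fZ_0$ is $\fM^{\circ}$-stable and $\fZ'$ is a single orbit), and a final appeal to the same rigidity lemma, that a proper subobject $Y \subsetneq X$ cannot surject onto $X$ by counting $\Sub$. The difference is how each proof manufactures the offending surjection onto $X$. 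The paper reduces to an $\fM^{\circ}$-map $\fB(Y) \to \fB(X)/\Gamma$, lifts it through the $\Gamma$-quotient using simple connectedness of $\fB(Y)$ (Proposition~\ref{prop:sc}), and invokes full faithfulness of $\fB$; it thus leans on the classification machinery of \S\ref{s:embed1} and on the generic-injectivity statement of the preceding proposition. You bypass $\Gamma$ and simple connectedness entirely: you identify the kernel pair $\fR = \fA(X) \times_{\fZ} \fA(X)$ with an ideal $\fc \subseteq \Sub(X \times X)$ via Proposition~\ref{prop:A-sub}, check it is closed under transpose and composition, and run the $W^t \circ W$ dichotomy in the spirit of Lemmas~\ref{lem:Gclass-1} and~\ref{lem:Gclass-2}, with minimality of $X$ disposing of the case of a non-trivial equivalence relation and rigidity disposing of the other; the round-trip argument then yields uniqueness cleanly. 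What your route buys is a sharper intermediate statement (the fiber of the generating orbit under a minimal presentation is exactly $\fB(X)$) and independence from the finite-cover theory; what it costs is redoing relation calculus in the $\fA$-setting, where two glosses need to be made explicit. First, composition-closure of $\fc$ is not literally Lemma~\ref{lem:Gclass-1}(c): that proof uses ampleness to get $p_{12}(W)=B$ and $p_{23}(W)=A$ on the nose, whereas your relations are not ample; the fix is that $\fc$ is an ideal, so the containments $p_{12}(W) \subseteq B$ and $p_{23}(W) \subseteq A$ already place the relevant pairs in $\fR$, and transitivity of $\fR$ finishes as before (one should also note $\Delta_X \subseteq W^t \circ W$, which follows from surjectivity of $f_0$ as in Lemma~\ref{lem:Gclass-2}, so that your two cases are exhaustive). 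Second, the factorization $f_0 = r \circ F$ in the case $W^t \circ W = \Delta_X$ takes place over the pro-object $\Omega_{\lambda}$, so one must first descend both maps to a common finite level $\Omega_k$ (the transition maps being surjective) and then use that surjections in $\cE$ are coequalizers of their kernel pairs. Both repairs are routine and consistent with the paper's own idiom, so the argument stands.
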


\begin{proof}
Let $\fA(X)/\Gamma \to \fZ$ be as in the above proposition. Let $\fA_0(X) \subset \fA(X)$ be the union of the $\fB(Y)$ with $Y$ a proper subobject of $X$; this is an $\fM$-subset of $\fA(X)$. Let $\fZ_0$ be the image of $\fA_0(X)$, and let $\fZ' \cong \fB(X)/\Gamma$ be the image of $\fB(X)$. Then $\fZ=\fZ' \cup \fZ_0$, and $\fZ'$ generates $\fZ$ as an $\fM$-set, and $\fZ_0$ is an $\fM$-subobject of $\fZ$. To prove the proposition, it suffices to show that $\fZ_0$ does not contain $\fZ'$, for then $\fZ'$ will be the only $\fM^{\circ}$-orbit that generates.

To this end, it suffices to show that if $Y$ is a subobject of $X$ and $f \colon \fB(Y) \to \fB(X)/\Gamma$ is a map of $\fM^{\circ}$-sets, then $Y=X$. First note that $f$ lifts to a map $f' \colon \fB(Y) \to \fB(X)$ of $\fM^{\circ}$-sets; indeed, the pull-back of $\fB(X) \to \fB(X)/\Gamma$ along $f$ is a finite cover of $\fB(Y)$, and thus splits since $\fB(Y)$ is simply connected. The map $f'$ is induced by a surjection $Y \to X$ in $\cE$. Since $Y$ is a subobject of $X$, we have $\# \Sub(Y) \le \# \Sub(X)$. Since $X$ is a quotient of $Y$, we have the reverse inequality. Thus the inclusion $Y \to X$ induces an isomorphism $\Sub(Y)=\Sub(X)$, and so $Y=X$.
\end{proof}

Let $\fZ$ be a monogenic smooth $\fM$-set. We define the \defn{generic locus} in $\fZ$, denoted $\fZ^{\gen}$, to be the unique $\fM^{\circ}$-orbit that generates $\fZ$. If $f \colon \fZ \to \fZ'$ is a surjection of monogenic smooth $\fM$-sets then $f$ induces a map $f^{\gen} \colon \fZ^{\gen} \to (\fZ')^{\gen}$. We say that $f$ is \defn{generically finite} (resp.\ \defn{generically an isomorphism}) if $f^{\gen}$ is a finite cover (resp.\ an isomorphism). The above results imply:

\begin{corollary}
Let $\fZ$ be a monogenic smooth $\fM$-set. Then there is a generically finite surjection $\fA(X) \to \fZ$ for some $X$, which induces a generic isomorphism $\fA(X)/\Gamma \to \fZ$ for some subgroup $\Gamma \subset \Aut(X)$. The object $X$ is unique up to isomorphism.
\end{corollary}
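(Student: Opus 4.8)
The plan is to deduce this directly from the two preceding propositions once the relevant generic loci are identified. Write $\lambda$ for the object of $\fM^{\circ}$ at which the generating orbit of $\fZ$ is supported, so that $\fZ^{\gen}$ is a transitive $\fG_{\lambda}$-set and any $X$ as in the statement has type $\lambda$. The first thing I would record is that $\fA(X)^{\gen}=\fB(X)$: indeed $\fB(X)$ generates $\fA(X)$ by Corollary~\ref{cor:Bgen}, and it is the \emph{unique} generating $\fM^{\circ}$-orbit by the second of the two preceding propositions, so it is the generic locus by definition. The same reasoning applied to $\fA(X)/\Gamma$ gives $(\fA(X)/\Gamma)^{\gen}=\fB(X)/\Gamma$.

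For existence, apply the first of the two preceding propositions to obtain $X$, a subgroup $\Gamma\subset\Aut(X)$, and a surjection $\phi\colon\fA(X)/\Gamma\to\fZ$ that is injective on $\fB(X)/\Gamma$. Since $\phi$ is surjective, the image $\phi(\fB(X)/\Gamma)$ is an $\fM^{\circ}$-orbit that generates $\fZ$, hence equals $\fZ^{\gen}$; thus $\phi^{\gen}\colon\fB(X)/\Gamma\to\fZ^{\gen}$ is a surjection of transitive $\fG_{\lambda}$-sets, and being also injective it is an isomorphism. Therefore $\phi$ is a generic isomorphism, which is the second assertion. Precomposing $\phi$ with the quotient $\fA(X)\to\fA(X)/\Gamma$, whose restriction to generic loci is the quotient $\fB(X)\to\fB(X)/\Gamma$ by the finite group $\Gamma$ (finite since $\Aut(X)\subset\Hom(X,X)$ is finite by \S\ref{ss:fp}(a)), produces a surjection $\fA(X)\to\fZ$ whose generic part is a finite cover; this is the required generically finite surjection, and it induces $\phi$.

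The main work is the uniqueness of $X$, and I expect this to be the chief obstacle. Suppose $X$ and $X'$ both carry such data, yielding generic isomorphisms $\fB(X)/\Gamma\cong\fZ^{\gen}\cong\fB(X')/\Gamma'$ of $\fG_{\lambda}$-sets. I would first produce surjections in both directions in $\cE$. Consider the composite $\fG_{\lambda}$-map $\fB(X')\to\fB(X')/\Gamma'\cong\fB(X)/\Gamma$; its pullback along the finite cover $\fB(X)\to\fB(X)/\Gamma$ is a finite cover of $\fB(X')$, which splits because $\fB(X')$ is simply connected (Proposition~\ref{prop:sc}). A section provides a lift $\fB(X')\to\fB(X)$, and by full faithfulness of the functor $X\mapsto\fB(X)$ (the main theorem of \S\ref{s:embed1}) this comes from a surjection $p\colon X'\to X$ in $\cE$. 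Symmetrically one obtains a surjection $q\colon X\to X'$.

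Finally, the existence of surjections both ways forces $X\cong X'$, via the fact that a surjective endomorphism of an object of $\cE$ is an isomorphism. The composite $p\circ q\colon X\to X$ is a surjective self-map, and since $\Hom(X,X)$ is a finite monoid (\S\ref{ss:fp}(a)) some power $u=(p\circ q)^{n}$ is idempotent; the relation $u\circ u=u=\id\circ u$ may then be cancelled on the right, as $u$ is an epimorphism, to give $u=\id$, whence $p\circ q$ is invertible. Likewise $q\circ p$ is invertible, so $p$ has both a left and a right inverse and is therefore an isomorphism. The delicate points are thus the lifting step that yields the two-sided surjections and this passage from two-sided surjections to an isomorphism; the two existence assertions, by contrast, are immediate from the preceding propositions.
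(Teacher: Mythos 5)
Your proposal is correct, and its skeleton matches the paper's: the paper's proof consists of the single remark that everything except uniqueness was already established by the two preceding propositions, and that uniqueness reduces to the fact that $\fB(X)/\Gamma \cong \fB(X')/\Gamma'$ forces $X \cong X'$, which it attributes in one line to the classification of transitive smooth $\fM^{\circ}$-sets (Proposition~\ref{prop:Gclass}). Where you differ is that you actually prove this key fact rather than cite it, and your argument is a legitimate alternative: the lift $\fB(X') \to \fB(X)$ via splitting the pulled-back finite cover (using Proposition~\ref{prop:sc}) is the same device the paper uses in proving uniqueness of the generating orbit, but your passage from two-sided surjections $p \colon X' \to X$, $q \colon X \to X'$ to an isomorphism --- some power of $p \circ q$ is idempotent in the finite monoid $\Hom(X,X)$, and an idempotent epimorphism is the identity by right cancellation --- is new relative to the paper, which in the analogous spot uses a $\# \Sub$-counting argument instead. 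Both are sound; your finite-monoid trick is arguably more robust since it needs only finiteness of $\Hom(X,X)$. One remark: your detour through surjections in \emph{both} directions is unnecessary. Once you have the single lift $\fB(X') \to \fB(X)$, note that its fibers are contained in the fibers of the composite $\fB(X') \to \fB(X)/\Gamma$ (which are finite, being $\Gamma'$-orbits), so it is a finite cover of the simply connected transitive set $\fB(X)$; it is therefore trivial, and transitivity of $\fB(X')$ forces it to be an isomorphism, whence $X' \cong X$ by full faithfulness --- no second lift and no cancellation argument needed.
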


\begin{proof}
We have explained everything except the uniquenes of $X$. This from the fact that if $\fB(X)/\Gamma$ is isomorphic to $\fB(X')/\Gamma'$ then $X \cong X'$; this statement, in turn, follows from the classification of transitive smooth $\fM^{\circ}$-sets.
\end{proof}

We say a monogenic smooth $\fM$-set $\fZ$ is \defn{simply connected} if any generically finite surjection $\fZ' \to \fZ$ of monogenic smooth $\fM$-sets is an isomorphism.

\begin{proposition} \label{prop:Asc}
The simply connected monogenic smooth $\fM$-sets are exactly the $\fA(X)$.
\end{proposition}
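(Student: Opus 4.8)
The plan is to prove both inclusions. I would begin by identifying the generic locus of $\fA(X)$: since $\fB(X)$ is a single $\fM^\circ$-orbit that generates $\fA(X)$ (Corollary~\ref{cor:Bgen}), the uniqueness of the generating orbit gives $\fA(X)^{\gen}=\fB(X)$. Consequently, for a surjection of monogenic $\fM$-sets into or out of $\fA(X)$, the induced map on generic loci is a map of $\fG_{\lambda}$-sets with source or target $\fB(X)$, and ``generically an isomorphism'' means exactly that this restriction is an isomorphism. The easy inclusion is then immediate: if $\fZ$ is simply connected and monogenic, the corollary immediately preceding this proposition furnishes a generically finite surjection $\fA(X)\to\fZ$ of monogenic $\fM$-sets, and simple connectedness of $\fZ$ forces it to be an isomorphism, so $\fZ\cong\fA(X)$.

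The substantive direction is that every $\fA(X)$ is simply connected, and the heart of the argument is a rigidity lemma I would isolate first: any surjection $\psi\colon\fA(X')\to\fA(X)$ which is an isomorphism on generic loci is an isomorphism. To prove this, note that $\psi|_{\fB(X')}\colon\fB(X')\to\fB(X)$ is an isomorphism of $\fG_{\lambda}$-sets (here $X'$ is necessarily of type $\lambda$, as $\fM$-maps preserve type), so by full faithfulness of $X\mapsto\fB(X)$ it is induced by an isomorphism $\sigma\colon X'\to X$ in $\cE$. The functorial map $\fA(\sigma)\colon\fA(X')\to\fA(X)$ agrees with $\psi$ on the generating orbit $\fB(X')$; since two maps of $\fM$-sets that agree on a generating set coincide, $\psi=\fA(\sigma)$, which is invertible because $\sigma$ is.

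Granting this, let $\beta\colon\fZ\to\fA(X)$ be an arbitrary generically finite surjection of monogenic $\fM$-sets, and apply the preceding corollary to $\fZ$ to obtain a generically finite surjection $\alpha\colon\fA(X')\to\fZ$. The composite $\beta\alpha\colon\fA(X')\to\fA(X)$ is a surjection whose restriction to generic loci is the composite of finite covers $\fB(X')\to\fZ^{\gen}\to\fB(X)$, hence itself a finite cover. Since $\fB(X)$ is simply connected (Proposition~\ref{prop:sc}) every finite cover of it is trivial, and as $\fB(X')$ is transitive the cover has a single sheet, i.e.\ $(\beta\alpha)^{\gen}$ is an isomorphism. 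The rigidity lemma then upgrades $\beta\alpha$ to an isomorphism, whence $\alpha$ is injective and, being surjective, an isomorphism; therefore $\beta=(\beta\alpha)\alpha^{-1}$ is an isomorphism as well, proving $\fA(X)$ simply connected.

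I expect the main obstacle to be the rigidity lemma, that is, controlling the behavior of a map out of $\fA(X)$ on its non-generic part $\coprod_{Y\subsetneq X}\fB(Y)$. The entire reduction rests on the fact that $\fA(X)$ is generated as an $\fM$-set by its generic locus $\fB(X)$ (Corollary~\ref{cor:Bgen}), which lets one pin down an arbitrary generic isomorphism by matching it with $\fA(\sigma)$ on $\fB(X')$ alone; the remaining bookkeeping (composites of finite covers, and mono-plus-epi implying iso for $\fM$-sets) is routine.
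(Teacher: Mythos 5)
Your proposal is correct and follows essentially the same route as the paper: both reduce an arbitrary generically finite surjection onto $\fA(X)$ to one of the form $\fA(X')\to\fA(X)$ via the preceding corollary, and both dispose of that case using simple connectedness of $\fB(X)$ (Proposition~\ref{prop:sc}) together with generation of $\fA(X')$ by $\fB(X')$ (Corollary~\ref{cor:Bgen}) and full faithfulness of $X\mapsto\fB(X)$. Your ``rigidity lemma'' is exactly the step the paper compresses into the phrase ``thus $\fA(Y)\to\fA(X)$ is an isomorphism too,'' so you have merely made the paper's implicit bookkeeping explicit.
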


\begin{proof}
We first show that $\fA(X)$ is simply connected. First suppose that $\fA(Y) \to \fA(X)$ is a generically finite surjection. This induces a finite cover $\fB(Y) \to \fB(X)$, and so $Y \to X$ is an isomorphism (Proposition~\ref{prop:sc}); thus $\fA(Y) \to \fA(X)$ is an isomorphism too. Now consider an arbitrary generically finite surjection $\fZ' \to \fA(X)$ with $\fZ'$ monogenic and smooth. Let $\fA(Y) \to \fZ'$ be a generically finite surjection. By the previous case, the composite map $\fA(Y) \to \fA(X)$ is an isomorphism, and so $\fZ' \to \fA(X)$ is an isomorphism. Thus $\fA(X)$ is simply connected.

Now suppose that $\fZ$ is a simply connected monogenic smooth $\fM$-set. There is then a generically finite surjection $\fA(X) \to \fZ$, which is an isomorphism since $\fZ$ is simply connected. This completes the proof.
\end{proof}

\subsection{Induction}

Any $\fM$-set can be restricted to an $\fM^{\circ}$-set. This operation preserves smoothness, and so we have a restriction functor
\begin{displaymath}
\Res \colon \hat{\bS}(\fM) \to \hat{\bS}(\fM^{\circ})
\end{displaymath}
Since the source and target are Grothendieck topoi and $\Res$ is continuous, it admits a left adjoint $\Ind$, called \defn{induction}.

\begin{proposition} \label{prop:Ind-B}
We have $\Ind(\fB(X)) = \fA(X)$.
\end{proposition}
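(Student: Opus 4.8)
The plan is to verify the universal property of $\Ind$ directly, i.e.\ to show that $\fA(X)$ corepresents the same functor on smooth $\fM$-sets that $\Ind(\fB(X))$ does. By the defining adjunction $\Ind \dashv \Res$ we have $\Hom_{\fM}(\Ind(\fB(X)), \fW) \cong \Hom_{\fM^{\circ}}(\fB(X), \Res\fW)$ naturally in $\fW$, so it suffices to produce a natural bijection $\Hom_{\fM}(\fA(X), \fW) \cong \Hom_{\fM^{\circ}}(\fB(X), \Res\fW)$ and invoke Yoneda. Let $\lambda$ be the type of $X$ and fix a surjection $f_0 \colon \Omega_{\lambda} \to X$, so that $\fB(X)$ is the $\fG_{\lambda}$-orbit of $f_0$ with stabilizer $\fG_{\lambda}(X)$; by Proposition~\ref{prop:resA} it sits inside $\Res\fA(X)$ as the summand indexed by $Y = X$, and I take this inclusion $\iota \colon \fB(X) \hookrightarrow \Res\fA(X)$ as the candidate unit. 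For each smooth $\fM$-set $\fW$ I consider
\begin{displaymath}
r_{\fW} \colon \Hom_{\fM}(\fA(X), \fW) \to \Hom_{\fM^{\circ}}(\fB(X), \Res\fW), \qquad \phi \mapsto (\Res\phi) \circ \iota ,
\end{displaymath}
and I would show $r_{\fW}$ is a bijection.

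Injectivity of $r_{\fW}$ is immediate from Corollary~\ref{cor:Bgen}: two $\fM$-maps out of $\fA(X)$ that agree on $\fB(X)$ agree on the $\fM$-subset generated by $\fB(X)$, which is all of $\fA(X)$.

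For surjectivity, given $\beta \colon \fB(X) \to \Res\fW$ I must build an $\fM$-map $\phi \colon \fA(X) \to \fW$ restricting to $\beta$. By Corollary~\ref{cor:Bgen}, every $f \in \fA(X)_{\mu} = \Hom_{\Pro(\cE)}(\Omega_{\mu}, X)$ can be written as $f = f_0 \circ g$ for some $g \colon \Omega_{\mu} \to \Omega_{\lambda}$, and I set $\phi(f) = \bar{g} \cdot \beta(f_0)$, where $\bar{g}$ is the corresponding morphism of $\fM$ acting via the functor $\fW$. That $\phi$ is $\fM$-equivariant and restricts to $\beta$ (take $g = \id$) is then formal. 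The real content is well-definedness. If $f_0 \circ g = f_0' \circ g'$ are two such factorizations, I first use ordinary homogeneity of $\Omega_{\lambda}$ (transitivity of $\fG_{\lambda}$ on $\fB(X)$) to write $f_0' = f_0 \circ \tau$ with $\tau \in \fG_{\lambda}$, and then the $\fM^{\circ}$-equivariance of $\beta$ reduces the comparison to two factorizations through the \emph{same} surjection $f_0$. What remains is the following relative homogeneity statement: if $f_0 \colon \Omega_{\lambda} \to X$ is surjective and $g, h \colon \Omega_{\mu} \to \Omega_{\lambda}$ satisfy $f_0 g = f_0 h$, then there is $\sigma \in \Aut(\Omega_{\lambda})$ with $f_0 \sigma = f_0$ and $\sigma g = h$. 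Granting this, $\sigma$ lies in $\fG_{\lambda}(X) = \mathrm{Stab}(f_0)$, hence $\bar{\sigma}$ fixes $\beta(f_0)$, and a short computation with the functoriality of $\fW$ shows the two candidate values of $\phi(f)$ coincide.

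The main obstacle is exactly this relative homogeneity lemma; everything else is formal adjunction and bookkeeping. It asserts that $\Omega_{\lambda}$ is homogeneous \emph{relative to} the quotient $f_0$, i.e.\ that $\fG_{\lambda}(X)$ acts transitively on the lifts through $f_0$ of a fixed map $\Omega_{\mu} \to X$. I expect to prove it by the back-and-forth argument underlying the universal homogeneous pro-object: the equality $f_0 g = f_0 h$ says that $g$ and $h$ determine a partial isomorphism of $\Omega_{\lambda}$ compatible with the map $f_0$ to $X$, and homogeneity extends it to a genuine automorphism $\sigma$ over $X$. Concretely, I would deduce it from the lifting property of $\Omega_{\lambda}$ recorded in \cite[Appendix~A]{homoten} (Proposition~A.7), applied level-by-level to the tower $(\Omega_{\lambda,i})_{i \in \bN}$ defining $\Omega_{\lambda}$, which is why the countability hypothesis on $\cE$ is convenient here.
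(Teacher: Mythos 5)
Your framing agrees with the paper's proof up to the halfway point: the paper also proves the proposition by showing that restriction $\Hom_{\fM}(\fA(X), \fW) \to \Hom_{\fM^{\circ}}(\fB(X), \fW)$ is bijective, and its injectivity argument is exactly yours (Corollary~\ref{cor:Bgen}). But your surjectivity step has a genuine gap: the ``relative homogeneity'' lemma it rests on is \emph{false}. Test it in the paper's first example, where $\cE$ is the opposite of the category of non-empty finite sets: then morphisms $\Omega_{\mu} \to \Omega_{\lambda}$ in $\Pro(\cE)$ are arbitrary functions $\tilde{g} \colon \Omega \to \Omega$ of a countably infinite set (with composition reversed), $\fG_{\lambda}$ is the infinite symmetric group, and a surjection $f_0 \colon \Omega_{\lambda} \to X$ with $X$ the one-point set is just a point $a \in \Omega$. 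Your lemma then asserts: if $\tilde{g}(a) = \tilde{h}(a)$, there is a permutation $\tilde{\sigma}$ fixing $a$ with $\tilde{h} = \tilde{g} \circ \tilde{\sigma}$. This fails whenever $\tilde{h}$ is injective and $\tilde{g}$ is not (say $\tilde{h} = \id$ and $\tilde{g}$ collapsing one extra point onto $a$), since precomposing $\tilde{g}$ with a bijection changes neither its image nor its fibers; it fails even for injective $\tilde{g}, \tilde{h}$ with different images, so restricting to surjective pro-maps does not save it. The root of the problem is that homogeneity and f-projectivity of $\Omega_{\lambda}$, including \cite[Proposition~A.7]{homoten}, govern only \emph{surjections in $\cE^s_{\lambda}$} --- equivalently the $\fG_{\lambda}$-action on the $\fB$-sets --- whereas the factorization witnesses $g, h$ arising from a general element of $\fA(X)_{\mu}$ are arbitrary morphisms of $\Pro(\cE)$; these parametrize precisely the non-generic part of $\fA(X)$ that the proposition is about, and no back-and-forth argument can produce your $\sigma$.

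The conclusion you need (well-definedness of $\phi(f_0 g) = \bar{g} \cdot \beta(f_0)$) is true, but the paper obtains it only a posteriori, by constructing the extension through the structure theory of monogenic sets rather than pointwise: given $\beta \colon \fB(X) \to \fW$, let $\fZ'$ be the $\fM$-subset generated by the image of $\beta$; it is monogenic, hence admits a generic isomorphism $\fA(Y)/\Gamma \to \fZ'$, and the induced surjection $\fB(X) \to (\fZ')^{\gen} = \fB(Y)/\Gamma$ is induced by a surjection $X \to Y$ in $\cE$ (here simple connectedness of $\fB(X)$ is what lets one lift through the finite cover $\fB(Y) \to \fB(Y)/\Gamma$, playing the role you wanted homogeneity to play). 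The composite $\fA(X) \to \fA(Y) \to \fA(Y)/\Gamma \to \fZ' \subseteq \fW$ then extends $\beta$. If you want to keep your explicit formula, you must first prove existence of the extension by some such route; as written, the central lemma of your argument is simply not available.
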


\begin{proof}
Let $\fZ$ be a smooth $\fM$-set. We must show that the map
\begin{displaymath}
\Hom_{\fM}(\fA(X), \fZ) \to \Hom_{\fM^{\circ}}(\fB(X), \fZ),
\end{displaymath}
induced by restricting from $\fA(X)$ to $\fB(X)$, is an isomorphism. This map is injective since $\fB(X)$ generates $\fA(X)$ (Corollary~\ref{cor:Bgen}). We now show it is surjective. Thus let a $\fM^{\circ}$-map $f \colon \fB(X) \to \fZ$ be given. Let $\fZ'$ be the $\fM$-subset of $\fZ$ generated by the image of $f$, which is monogenic. Let $\fA(Y)/\Gamma \to \fZ'$ be a generic isomorphism. The surjection $\fB(X) \to (\fZ')^{\gen}=\fB(Y)/\Gamma$ is induced from a surjection $X \to Y$, which in turn induces a surjection $\fA(X) \to \fA(Y)$. The composite $\fA(X) \to \fZ$ extends the original map $f$, which completes the proof.
\end{proof}

\begin{corollary}
Induction preserves finitary sets.
\end{corollary}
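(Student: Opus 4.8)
The plan is to exploit that $\Ind$, being a left adjoint, preserves all colimits, and to reduce an arbitrary finitary smooth $\fM^{\circ}$-set to the generators $\fB(X)$ already handled by Proposition~\ref{prop:Ind-B}. First I would recall that a finitary smooth $\fM^{\circ}$-set $\fZ$ is, by definition, a finite union of $\fM^{\circ}$-orbits, i.e.\ a finite coproduct of transitive smooth $\fM^{\circ}$-sets. By the classification in Proposition~\ref{prop:Gclass}, each such orbit has the form $\fB(X_i)/\Gamma_i$, and is in particular a quotient of $\fB(X_i)$. Assembling these covers, $\fZ$ is a quotient of the finite coproduct $\coprod_i \fB(X_i)$.

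Next I would apply induction. Since $\Ind$ preserves coproducts and surjections, we obtain that $\Ind(\fZ)$ is a quotient of $\Ind(\coprod_i \fB(X_i)) = \coprod_i \Ind(\fB(X_i)) = \coprod_i \fA(X_i)$, the last equality being Proposition~\ref{prop:Ind-B}. It then suffices to observe that each $\fA(X_i)$ is a finitary $\fM$-set, and that the class of finitary smooth $\fM$-sets is closed under finite coproducts and under quotients. The finitariness of $\fA(X)$ is immediate from Corollary~\ref{cor:Bgen}: since $\fA(X)$ is generated as an $\fM$-set by the single $\fM^{\circ}$-orbit $\fB(X)$, for any $x \in \fB(X)$ we have $\fB(X) = \fM^{\circ} x \subseteq \fM x$, whence $\fM x = \fA(X)$, so that $\fA(X)$ is even monogenic.

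The one point requiring care is the preservation of surjections by $\Ind$; once this is in hand the argument is purely formal. I would verify it via the general fact that a left adjoint between Grothendieck topoi preserves regular epimorphisms, which here coincide with surjections (a surjection is the coequalizer of its kernel pair, and left adjoints preserve coequalizers). The remaining closure properties of finitary sets under finite coproduct and quotient are routine from the definition, so I expect no genuine obstacle beyond correctly marshalling these formal inputs.
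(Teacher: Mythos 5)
Your proposal is correct and takes essentially the same approach as the paper: both rest on the classification of transitive smooth $\fM^{\circ}$-sets (Proposition~\ref{prop:Gclass}), the computation $\Ind(\fB(X))=\fA(X)$ (Proposition~\ref{prop:Ind-B}), and cocontinuity of $\Ind$. The only cosmetic difference is that the paper uses cocontinuity to compute $\Ind(\fB(X)/\Gamma)=\fA(X)/\Gamma$ exactly, whereas you present a finitary set as a quotient of $\coprod_i \fB(X_i)$ and invoke preservation of epimorphisms together with closure of finitary sets under finite coproducts and quotients; both routes are sound.
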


\begin{proof}
Since $\Ind$ is co-continuous, we have $\Ind(\fB(X)/\Gamma)=\fA(X)/\Gamma$ for a subgroup $\Gamma \subset \Aut(X)$. Since every finitary smooth $\fM^{\circ}$-set is a finite union of sets of the form $\fB(X)/\Gamma$ (Proposition~\ref{prop:Gclass}), the result follows.
\end{proof}

\subsection{The main theorem}

We are now ready to prove our main result relating $\cE$ and $\fM$:

\begin{theorem}
The functor $\cE \to \bS(\fM)$ given by $X \mapsto \fA(X)$ is fully faithful. Its essential image consists of those smooth $\fM$-sets that are monogenic and simply connected.
\end{theorem}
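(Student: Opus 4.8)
The essential image is already pinned down by the preceding results: $\fA(X)$ is monogenic by Corollary~\ref{cor:Bgen} (it is generated by $\fB(X)$) and simply connected by Proposition~\ref{prop:Asc}, while conversely Proposition~\ref{prop:Asc} identifies every simply connected monogenic smooth $\fM$-set as some $\fA(X)$. Faithfulness is exactly Proposition~\ref{prop:A-faithful}. Hence the only remaining content of the theorem is fullness, and the plan is to prove it directly: every $\fM$-map $\phi \colon \fA(X) \to \fA(Y)$ equals $\fA(f)$ for some $f \in \Hom_{\cE}(X,Y)$.

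First I would pass from the monoid to the group via the induction adjunction. By Proposition~\ref{prop:Ind-B} we have $\fA(X) = \Ind(\fB(X))$, so restriction along $\fB(X) \subset \fA(X)$ gives a bijection
\[
\Hom_{\fM}(\fA(X), \fA(Y)) \xrightarrow{\ \sim\ } \Hom_{\fM^{\circ}}(\fB(X), \Res \fA(Y)),
\]
and thus $\phi$ is determined by $\phi|_{\fB(X)}$. By Proposition~\ref{prop:resA} the restriction decomposes as the finite coproduct $\Res \fA(Y) = \coprod_{Y' \subset Y} \fB(Y')$. Since $\fB(X)$ is transitive, i.e.\ an atom of the ambient pre-Galois category, any $\fM^{\circ}$-map out of it factors uniquely through a single summand $\fB(Y')$; moreover, as $\fB(X)$ is supported at $\lambda = \mathrm{type}(X)$ and $\fB(Y')$ at $\mathrm{type}(Y')$, such a map can only exist when $\mathrm{type}(Y') = \lambda$, in which case it is genuinely a map of $\fG_{\lambda}$-sets $\fB(X) \to \fB(Y')$.

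At this point the first embedding theorem takes over. The functor $\cE^s_{\lambda} \to \bS(\fG_{\lambda})$, $X \mapsto \fB(X)$, is fully faithful, so the $\fG_{\lambda}$-map $\fB(X) \to \fB(Y')$ is induced by a unique surjection $s \colon X \to Y'$ in $\cE$. Composing with the inclusion $\iota \colon Y' \hookrightarrow Y$ yields a morphism $f = \iota \circ s \colon X \to Y$ in $\cE$. It then remains to verify that $\fA(f) = \phi$, and by the displayed adjunction bijection it suffices to compare the two maps on $\fB(X)$: for a surjection $q \colon \Omega_{\lambda} \to X$ one computes $\fA(f)(q) = f \circ q = \iota \circ (s \circ q)$, which is precisely the element of $\fB(Y') \subset \fA(Y)$ corresponding to $\fB(s)(q)$, i.e.\ exactly $\phi(q)$. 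Uniqueness of $f$ is then immediate from faithfulness (Proposition~\ref{prop:A-faithful}).

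The main obstacle is not any single hard step but the bookkeeping of the previous two paragraphs: one must check that the composite isomorphism (the induction adjunction, then the atom-decomposition of $\Res \fA(Y)$, then the first embedding theorem) really is inverse to $f \mapsto \fA(f)$, i.e.\ that these three reductions are mutually compatible. The key subtlety is the type constraint --- only subobjects $Y' \subset Y$ of the same type as $X$ contribute --- which is exactly what makes the elementary identity $\Hom_{\cE}(X,Y) = \coprod_{Y' \subset Y} \Surj_{\cE}(X,Y')$ (factor each morphism through its image) match the orbit decomposition of $\Res \fA(Y)$. Once this compatibility is in place, fullness drops out immediately.
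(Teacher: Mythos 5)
Your proposal is correct, and it follows the same skeleton as the paper's proof: both handle faithfulness via Proposition~\ref{prop:A-faithful}, the essential image via Proposition~\ref{prop:Asc}, and fullness through the same chain of identifications
\begin{displaymath}
\Hom_{\fM}(\fA(X), \fA(Y)) = \Hom_{\fM^{\circ}}(\fB(X), \fA(Y)) = \coprod_{Y' \subset Y} \Hom_{\fM^{\circ}}(\fB(X), \fB(Y')) = \Hom_{\cE}(X,Y),
\end{displaymath}
using Proposition~\ref{prop:Ind-B}, Proposition~\ref{prop:resA}, and the full faithfulness of $X \mapsto \fB(X)$ on $\cE^s_{\lambda}$. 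Where you diverge is in the final step. The paper never checks that this composite bijection is inverse to $f \mapsto \fA(f)$: it observes that the natural map $\Hom_{\cE}(X,Y) \to \Hom_{\fM}(\fA(X),\fA(Y))$ is injective, that both sides are finite of the same cardinality (finiteness of $\Hom_{\cE}(X,Y)$ coming from the finitely-powered hypothesis, \S\ref{ss:fp}(a)), and concludes by counting --- deliberately sidestepping exactly the compatibility bookkeeping you identify as the main obstacle. You instead do that bookkeeping explicitly: given $\phi$, you restrict to $\fB(X)$, land in a single summand $\fB(Y')$ by transitivity (with the correct observation that only subobjects $Y'$ of the same type as $X$ can receive a map), recover a surjection $s \colon X \to Y'$ from the first embedding theorem, set $f = \iota \circ s$, and verify $\fA(f) = \phi$ on the generator using Corollary~\ref{cor:Bgen}. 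Your verification is sound, and it matches the image-factorization identity $\Hom_{\cE}(X,Y) = \coprod_{Y' \subset Y} \Surj_{\cE}(X,Y')$ against the orbit decomposition as needed. What each approach buys: the paper's counting argument is shorter but genuinely relies on finiteness of Hom sets, whereas your explicit construction of the preimage morphism is independent of that finiteness and exhibits the inverse concretely; the cost is the extra (routine but necessary, and correctly executed) compatibility check. There is no gap in your argument.
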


\begin{proof}
We first prove that the functor is fully faithful. The natural map
\begin{displaymath}
\Hom_{\cE}(X,Y) \to \Hom_{\fM}(\fA(X), \fA(Y))
\end{displaymath}
is injective by Proposition~\ref{prop:A-faithful}. It thus suffices to show the two sides have the same cardinality. We have natural isomorphisms
\begin{align*}
\Hom_{\fM}(\fA(X), \fA(Y))
&= \Hom_{\fM^{\circ}}(\fB(X), \fA(Y))
= \coprod_{Y' \subset Y} \Hom_{\fM^{\circ}}(\fB(X), \fB(Y')) \\
&= \coprod_{Y' \subset Y} \Hom_{\cE^s}(X, Y')
= \Hom_{\cE}(X, Y),
\end{align*}
as required. In the first step we used the calculation of induction (Proposition~\ref{prop:Ind-B}), and in the second step the calculation of restriction (Proposition~\ref{prop:resA}). The characterization of the essential image follows from Proposition~\ref{prop:Asc}.
\end{proof}

\begin{remark} \label{rmk:hypo}
We have assumed in this section that $\cE$ has countably many isomorphism classes. This covers all cases of interest to us at the present time. This assumption was used in the proof of Proposition~\ref{prop:A-sub} to lift a map from $\Omega_{\mu}$. To remove this hypothesis, one can either carefully keep track of the sizes of index sets of pro-objects, or use an ultraproduct trick as in \cite[\S 6.5]{pregalois}. We also assumed that $\cE$ is exact. This is only needed to obtain a complete classification of the monogenic simply connected smooth $\fM$-sets, as one can see by passing to the exact completion (\S \ref{ss:exact}).
\end{remark}

\subsection{Sheaves}

We have just seen that a certain small piece of the category $\hat{\bS}(\fM)$ of smooth $\fM$-sets is equivalent to $\cE$. It is natural to wonder if there is a simple way of recovering the entire category from $\cE$. We now show that this is indeed possible.

Define a Grothendieck topology on $\cE$ by taking the covering families to be surjective morphisms. We denote the resulting site by $\cE_{\rm re}$, and write $\Sh(\cE_{\rm re})$ for the category of sheaves on $\cE_{\rm re}$. Explicitly, such a sheaf is a functor $F \colon \cE^{\op} \to \Set$ such that for any surjection $Y \to X$, the sequence
\begin{displaymath}
F(X) \to F(Y) \rightrightarrows F(Y \times_X Y)
\end{displaymath}
is an equilizer diagram. If $X$ is an object of $\cE$ then the representable presheaf $h_X=\Hom_{\cE}(-, X)$ is a sheaf on $\cE_{\rm re}$. This notion of sheaf is commonly used in connection to regular categories; see, e.g., \cite[\S 4]{Barr} or \cite[\S 2]{Lack} for details.

The following is the main result we are after in this direction.

\begin{theorem}
We have a natural equivalence of categories $\Sh(\cE_{\mathrm{re}}) = \hat{\bS}(\fM)$.
\end{theorem}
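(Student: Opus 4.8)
The plan is to construct an equivalence $\Sh(\cE_{\mathrm{re}}) \simeq \hat{\bS}(\fM)$ by matching the two categories on a common small generating family and then extending to all objects. The obvious candidate functor sends a sheaf $F$ to the $\fM$-set whose value at $\lambda \in \Lambda$ is the filtered colimit $\mathrm{colim}_{X} F(X)$ taken over the pro-object $\Omega_\lambda = (\Omega_{\lambda,i})_i$, i.e.\ the stalk $F(\Omega_\lambda) := \mathrm{colim}_i F(\Omega_{\lambda,i})$. The $\fM$-action comes from functoriality of $F$ applied to the transition maps $\Omega_\mu \to \Omega_\lambda$ that define the morphisms of $\fM$. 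In the other direction, given a smooth $\fM$-set $\fZ$, one recovers a presheaf on $\cE$ by $X \mapsto \Hom_{\fM}(\fA(X), \fZ)$. The key point tying the two together is that under the Yoneda-type dictionary, the representable sheaf $h_X$ should correspond to $\fA(X)$: indeed $h_X(\Omega_\lambda) = \mathrm{colim}_i \Hom_{\cE}(\Omega_{\lambda,i}, X) = \Hom_{\Pro(\cE)}(\Omega_\lambda, X) = \fA(X)_\lambda$, so the stalk functor literally sends $h_X$ to $\fA(X)$.

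The first step is to verify that both functors are well defined and land in the right categories: that $F \mapsto F(\Omega_\bullet)$ produces a \emph{smooth} $\fM$-set (smoothness should follow because each element of a stalk is represented at some finite stage $\Omega_{\lambda,i}$, giving it an open stabilizer), and that $\fZ \mapsto \Hom_\fM(\fA(-), \fZ)$ produces a \emph{sheaf} for the regular topology (this is where the sheaf condition $F(X) \to F(Y) \rightrightarrows F(Y\times_X Y)$ must be checked against the fact that $\fA$ carries surjections to surjections and $\fA(Y\times_X Y) = \fA(Y)\times_{\fA(X)}\fA(Y)$). The second step is to check the two functors are mutually inverse. On representables this is exactly the computation above, $h_X \leftrightarrow \fA(X)$, combined with the full faithfulness of $X \mapsto \fA(X)$ already proved in the previous theorem, which identifies $\Hom_{\Sh}(h_X, h_Y) = \Hom_\cE(X,Y)$ with $\Hom_\fM(\fA(X), \fA(Y))$.

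The third and decisive step is to promote the equivalence on representables to an equivalence on all objects. The strategy is to exhibit each side as generated under colimits by the common subcategory. On the topos side, every sheaf on a site is canonically a colimit of representables (via the co-Yoneda / density formula), and the regular-topology representables are the $h_X$. On the $\fM$-set side, Corollary~\ref{cor:Aquot} shows every smooth $\fM$-set is a quotient of a disjoint union of the $\fA(X)$, and one upgrades this to a presentation by a coequalizer of such disjoint unions. Since both functors preserve colimits — the stalk functor because filtered colimits commute with the colimits computing sheafification-free sections, and $\Hom_\fM(\fA(-),-)$ by adjunction — the equivalence on the generating family extends uniquely. The main obstacle I anticipate is precisely this last extension: one must verify that the stalk functor $F \mapsto F(\Omega_\bullet)$ genuinely commutes with all relevant colimits (filtered colimits of sheaves are subtle, since the colimit must be sheafified), and dually that $\Hom_\fM(\fA(-),-)$ is not merely left exact but actually reflects and preserves the coequalizer presentations coming from Corollary~\ref{cor:Aquot}. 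Concretely, the crux is showing that the counit and unit of the resulting adjunction are isomorphisms on \emph{all} objects, which reduces — by the colimit presentations — to the already-established representable case together with exactness of the stalk functor; establishing that exactness, i.e.\ that $F \mapsto F(\Omega_\bullet)$ is the fiber functor of a point of the topos $\Sh(\cE_{\mathrm{re}})$ and hence preserves finite limits and all colimits, is where the real work lies.
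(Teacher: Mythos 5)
Your outline matches the paper's proof in its main lines: the same stalk functor $\Phi_\lambda(F)=\varinjlim_i F(\Omega_{\lambda,i})$, the same dictionary $\Phi(h_X)=\fA(X)$, and the same use of presentations by the $\fA(X)$ (Corollary~\ref{cor:Aquot}) for essential surjectivity. But the step you defer as ``where the real work lies'' is precisely the step your proposed reduction cannot deliver. You want the unit and counit isomorphisms of the adjunction to follow from the representable case plus colimit presentations; this fails on the nerve side. By Proposition~\ref{prop:Ind-B} and the identification $\fB(X)=\fG_\lambda/U(X)$, one has $\Hom_{\fM}(\fA(X),\fZ)=\fZ_\lambda^{U(X)}$, and taking $U(X)$-invariants does not commute with coequalizers (an invariant element of a quotient need not lift to an invariant element), so the unit being an isomorphism on representables does not propagate along a colimit presentation of a general sheaf $F$: the presentation sits in the \emph{second} variable of $\Hom_\fM(\fA(X),-)$, where nothing is preserved. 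The paper's substitute is a direct pointwise statement that your outline lacks: for a chosen surjection $q_X\colon \Omega_\lambda\to X$ with stabilizer $U(X)$, the map $q_X^*\colon F(X)\to \Phi_\lambda(F)^{U(X)}$ is a bijection for \emph{every} sheaf $F$ (``$q_X$ is a $U(X)$-torsor''), which uses the sheaf axiom for the covers $\Omega_{\lambda,i}\to X$ together with homogeneity of $\Omega_\lambda$. Combined with $\Phi_\lambda(F)^{U(X)}=\Hom_\fM(\fA(X),\Phi(F))$, this yields $\Hom(h_X,F)\cong\Hom_\fM(\Phi(h_X),\Phi(F))$ for all $F$, and then full faithfulness for an arbitrary source $F'$ by presenting $F'$ (not $F$) by representables --- colimits in the \emph{first} variable of $\Hom$ are harmless.

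A second concrete omission: you never invoke f-projectivity of $\Omega_\lambda$, which is the one genuinely site-theoretic input in the paper's argument. It is used to show $\Phi$ carries surjections of sheaves to surjections of $\fM$-sets: a section of $F'$ over $\Omega_{\lambda,i}$ lifts to $F(T)$ after some cover $T\to\Omega_{\lambda,i}$, and f-projectivity lifts $\Omega_\lambda\to\Omega_{\lambda,i}$ through $T$. This lemma is what makes $\Phi(F)$ smooth (as a quotient of a coproduct of $\fA(X)$'s) and what makes the essential-surjectivity step actually compute $\Phi$ of the sheaf built from a chosen presentation. Your smoothness argument via ``open stabilizers'' is off-target here, since smoothness of $\fM$-sets is defined in this paper as membership in the non-archimedean class generated by the $\fA(X)$, not by a stabilizer condition. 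These two points --- the torsor bijection and the surjection lemma --- are the substance of the theorem; without them your outline is a correct frame around an unproved core.
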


\begin{proof}
Since this result is somewhat tangential to the main point of the paper, we omit some details in this proof. Let $F$ be a sheaf on $\cE_{\rm re}$. For $\lambda \in \Lambda$, put
\begin{displaymath}
\Phi_{\lambda}(F) = F(\Omega_{\lambda}) = \varinjlim F(\Omega_{\lambda,i}),
\end{displaymath}
where $\Omega_{\lambda,i}$ are the various terms in the pro-object $\Omega_{\lambda}$. The functor $\Phi_{\lambda}$ preserves finite limits and finite coproducts. Suppose $f \colon F \to F'$ is a surjection of sheaves. We claim that $\Phi_{\lambda}(f)$ is a surjection of sets. Indeed, given $x \in F'(\Omega_{\lambda,i})$, there is some surjection $T \to \Omega_{\lambda,i}$ such that $x$ lifts through $f$ to an element $F(T)$. Since $\Omega_{\lambda}$ is f-projective, the natural map $\Omega_{\lambda} \to \Omega_{\lambda,i}$ lifts through $T$. This shows that $x$ lifts to an element of $F(\Omega_{\lambda,j})$ for some $j$, i.e., to an element of $\Phi_{\lambda}(F)$. This proves the claim.

The $\Phi_{\lambda}(F)$'s, as $\lambda$ varies, define an $\fM$-set $\Phi(F)$. Clearly, we have $\Phi(h_X)=\fA(X)$. If $F$ is any sheaf then $F$ is a quotient of a disjoint union of representable sheaves, and so we see that $\Phi(F)$ is a quotient of a smooth $\fM$-set, and thus smooth itself. We thus have a functor
\begin{displaymath}
\Phi \colon \Sh(\cE_{\rm re}) \to \hat{\bS}(\fM).
\end{displaymath}
We aim to show that this functor is an equivalence.

Let $X$ be an object of $\cE$ of type $\lambda$, choose a surjection $q_X \colon \Omega_{\lambda} \to X$, and let $U(X) \subset \fG_{\lambda}$ be the stabilizer of $q_X$. If $F$ is a sheaf on $\cE_{\rm re}$ then the natural map
\begin{displaymath}
q_X^* \colon F(X) \to \Phi_{\lambda}(F)^{U(X)}
\end{displaymath}
is a bijection; intuitively, $q_X$ is a $U(X)$-torsor. It follows that $\Phi$ is faithful. We have
\begin{displaymath}
\Phi_{\lambda}(F)^{U(X)} = \Hom_{\fM^{\circ}}(\fB(X), \Phi(F)) = \Hom_{\fM}(\fA(X), \Phi(F)),
\end{displaymath}
where in the first step we used the identification $\fB(X)=\fG_{\lambda}/U(X)$, and in the second step we used the computation of induction (Proposition~\ref{prop:Ind-B}). Since $\fA(X)=\Phi(h_X)$, we see that the natural map
\begin{displaymath}
\Hom(h_X, F) \to \Hom_{\fM}(\Phi(h_X), \Phi(F))
\end{displaymath}
is a bijection. Now let $F'$ be an arbitrary sheaf. Choosing a presentation of $F'$ by representable sheaves and using the above bijection, we find that the natural map
\begin{displaymath}
\Hom(F', F) \to \Hom_{\fM}(\Phi(F'), \Phi(F))
\end{displaymath}
is a bijection. Thus $\Phi$ is fully faithful.

Finally, suppose that $\fZ$ is a given smooth $\fM$-set. Choose a presentation of $\fZ$ by sets of the form $\fA(X)$, which is possible by Corollary~\ref{cor:Aquot}. Define $F$ to be the sheaf obtained using the corresponding presentation of representable sheaves. We then have $\Phi(F) \cong \fZ$, which shows that $\Phi$ is essentially surjective.
\end{proof}

\section{Degree functions and measures}

The purpose of this section is to prove the first part of Theorem~\ref{mainthm2}, which relates degree functions (as defined by Knop) to measures (as defined by Harman--Snowden).

\subsection{Degree functions} \label{ss:degree}

Let $\cE$ be a finitely-powered regular category. Recall that $\{E_{\lambda}\}_{\lambda \in \Lambda}$ are the subobjects of $\bone$. Let $1 \in \Lambda$ be the index such that $E_1=\bone$. We say that an object $X$ of $\cE$ is \defn{principal} if it has type~1, meaning that the map $X \to \bone$ is surjective.

\begin{definition} \label{defn:degree}
A \defn{degree function} for $\cE$ valued in a ring $k$ is a rule $\nu$ that assigns to each surjection $f \colon Y \to X$ of principal objects a value $\nu(f) \in k$ such that the following hold:
\begin{enumerate}
\item $\nu(f)=1$ if $f$ is an isomorphism.
\item $\nu(g \circ f)=\nu(g) \cdot \nu(f)$ when defined.
\item If $f \colon Y \to X$ is a surjection of principal objects, $X' \to X$ is any map of principal objects, and $f' \colon Y' \to X'$ is the base change of $f$, then $\nu(f')=\nu(f)$. \qedhere
\end{enumerate}
\end{definition}

\begin{example}
The \defn{trivial degree function} assigns~1 to each surjection.
\end{example}

The notion of degree function does not refer to addition in the ring $k$, and so one can make sense of a degree function valued in an arbitrary commutative monoid\footnote{In fact, commutativity is not necessary, but we assume it for simplicity.}. There is a universal degree function valued in a commutative monoid $\Pi(\cE)$. To define this monoid, take the free commutative monoid generated by morphisms of principal objects and quotient by relations corresponding to (a)--(c). For a surjection $f$ of principal objects, we write $\{ f \}$ for the associated class in $\Pi(\cE)$. Giving a degree function valued in a commutative monoid $M$ is then equivalent to giving a monoid homomorphism $\Pi(\cE) \to M$. In particular, giving a degree function valued in a ring $k$ is equivalent to giving a ring homomorphism $\bZ[\Pi(\cE)] \to k$, where $\bZ[\Pi(\cE)]$ denotes the monoid algebra. The ring $\bZ[\Pi(\cE)]$ is what Knop called $\rK(\cE)$ \cite[\S 8]{Knop2}.

\begin{remark}
Knop \cite[Definition~3.1]{Knop2} defines degree functions using all objects, not just principal objects. This difference does not really change much, especially due to \cite[Theorem~3.6]{Knop2}.
\end{remark}

\subsection{Measures}

Let $\cC$ be an order in a pre-Galois category. The following is a key idea introduced in \cite{repst}:

\begin{definition} \label{defn:meas}
A \defn{measure} for $\cC$ valued in a ring $k$ is a rule $\mu$ that assigns to each morphism $f \colon Y \to X$ of atoms in $\cC$ a value $\mu(f) \in k$ such that the following conditions hold:
\begin{enumerate}
\item $\mu(f)=1$ if $f$ is an isomorphism.
\item $\mu(g \circ f)=\mu(g) \cdot \mu(f)$ when defined.
\item Suppose $X' \to X$ is a morphism of atoms, let $Y'=Y \times_X X'$, and let $f' \colon Y' \to X'$ be the natural map. Let $Y'=\bigsqcup_{i=1}^n Y'_i$ be the decomposition of $Y'$ into atoms, and let $f'_i$ be the restriction of $\phi'$ to $Y'_i$. Then $\mu(f)=\sum_{i=1}^n \mu(f'_i)$. \qedhere
\end{enumerate}
\end{definition}

Suppose $\mu$ is a measure. There are two pieces of notation we introduce. First, suppose $f \colon Y \to X$ is a map with $X$ an atom. Let $Y=\bigcup_{i=1}^n Y_i$ be the decomposition of $Y$ into atoms, and let $f_i$ be the restriction of $f$ to $Y_i$. We then define $\mu(f)=\sum_{i=1}^n \mu(f_i)$. With this convention, axiom~(c) above takes the form $\mu(f)=\mu(f')$. Second, for an object $X$ of $\cC$, we put $\mu(X)=\mu(X \to \bone)$.

We say that the measure $\mu$ is \defn{regular} if $\mu(X)$ is a unit of $k$ for all objects $X$. Suppose this is the case, and let $f \colon Y \to X$ be a morphism in $\cC$, with $X$ an atom. By considering the composition of $f$ with the map $X \to \bone$, we find that $\mu(f) \mu(X)=\mu(Y)$, and so $\mu(f)=\mu(Y) \mu(X)^{-1}$. We thus see that the value of $\mu$ on morphisms is entirely determined from its values on objects. See \cite[\S 3.5]{repst} for more details.

As with degree functions, there is a universal measure valued in a ring $\Theta(\cC)$: giving a measure valued in $k$ is the same as giving a ring homomorphism $\Theta(\cC) \to k$. See \cite[\S 4.5]{repst} for details. We let $[f]$ be the class of $f \colon Y \to X$ in $\Theta(\cC)$.

\subsection{Constant maps} \label{ss:constant}

Let $\mu$ be a measure for $\cC$, as above. Let $f \colon Y \to X$ be a map in $\cC$. Write $X=\bigsqcup_{i=1}^n X_i$, with $X_i$ an atom, let $Y_i=f^{-1}(X_i)$, and let $f_i \colon Y_i \to X_i$ be the induced map. We say that $f$ is \defn{$\mu$-constant} if $\mu(f_i)$ is independent of $i$; assuming $n \ne 0$, we then put $\mu(f)=\mu(f_i)$ for any $i$. We now establish two simple properties of $\mu$-constant maps.

\begin{proposition} \label{prop:mu-const-1}
Suppose $f \colon Y \to X$ is $\mu$-constant. Let $X' \to X$ be an arbitrary map, with $X'$ non-empty, and let $f' \colon Y' \to X'$ be the base change of $f$. Then $f'$ is also $\mu$-constant, and $\mu(f')=\mu(f)$.
\end{proposition}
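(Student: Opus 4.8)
The plan is to reduce everything to the interaction between base change and decomposition into atoms, and then invoke the form $\mu(f)=\mu(f')$ of axiom~(c) from Definition~\ref{defn:meas} component by component. First I would fix the decompositions into atoms: write $X=\bigsqcup_{i=1}^n X_i$ and $X'=\bigsqcup_{j=1}^m X'_j$, with all pieces atomic and $m\ge 1$ since $X'$ is non-empty. Each atom $X'_j$ maps into $X$, so by axiom~(c) of Definition~\ref{defn:pregalois} (an atom mapping into a finite coproduct factors uniquely through one summand) there is a well-defined index $\sigma(j)$ and a factorization $X'_j \to X_{\sigma(j)} \hookrightarrow X$, with $X'_j \to X_{\sigma(j)}$ a morphism of atoms.

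Next I would identify the restriction of $f'$ over each $X'_j$. Writing $Y_i=f^{-1}(X_i)$ and $f_i\colon Y_i\to X_i$ as in the definition of $\mu$-constant map, base change is compatible with the factorization above:
\begin{displaymath}
(f')^{-1}(X'_j) = Y'\times_{X'} X'_j = Y\times_X X'_j = Y_{\sigma(j)}\times_{X_{\sigma(j)}} X'_j,
\end{displaymath}
so the component $f'_j\colon (f')^{-1}(X'_j)\to X'_j$ of $f'$ is exactly the base change of $f_{\sigma(j)}\colon Y_{\sigma(j)}\to X_{\sigma(j)}$ along the map of atoms $X'_j\to X_{\sigma(j)}$. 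Here $X_{\sigma(j)}$ is an atom while $Y_{\sigma(j)}$ need not be, so I am using axiom~(c) in its extended form (summing $\mu$ over the atomic pieces of the source), which yields $\mu(f'_j)=\mu(f_{\sigma(j)})$ for each $j$.

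Finally I would conclude using $\mu$-constancy of $f$: by hypothesis $\mu(f_i)$ is independent of $i$ and equal to the common value $\mu(f)$, so $\mu(f_{\sigma(j)})=\mu(f)$ for every $j$. Combining with the previous step gives $\mu(f'_j)=\mu(f)$ for all $j$, which is precisely the assertion that $f'$ is $\mu$-constant, together with $\mu(f')=\mu(f)$. I do not expect a serious obstacle here; the only real care is the bookkeeping in the second paragraph, namely checking that the fiber product over the atom $X'_j$ genuinely computes as the base change of the single component $f_{\sigma(j)}$ (which relies on the unique factorization through $X_{\sigma(j)}$) and that the extended convention for $\mu$ on non-atomic sources is the one matching both the definition of $\mu$-constant and the statement of axiom~(c).
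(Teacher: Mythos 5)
Your proof is correct and is essentially the paper's own argument: the paper likewise decomposes $X'$ into atoms, observes that each atom of $X'$ maps onto a single atom of $X$ (its image), identifies the component of $f'$ over that atom as the base change of the corresponding component of $f$, and concludes via the $\mu(f)=\mu(f')$ form of axiom~(c) together with $\mu$-constancy of $f$. Your additional bookkeeping (the unique factorization through a summand and the chain of fiber-product identifications) just makes explicit what the paper leaves implicit.
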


\begin{proof}
Write $X'=\bigsqcup_{i=1} X'_i$, with $X'_i$ an atom, and let $f'_i$ be the restriction of $f'$ to the inverse image of $X'_i$. Let $X_i$ be the image of $X'_i$ in $X$, which is an atom, and let $f_i$ be the restriction of $f$ to the inverse image of $X_i$. Then $f'_i$ is the base change of $f_i$, and so $\mu(f'_i)=\mu(f_i)=\mu(f)$. The result follows.
\end{proof}

\begin{proposition} \label{prop:mu-const-2}
If $f \colon Z \to Y$ and $g \colon Y \to X$ are $\mu$-constant, with $X$ non-empty, then so is $g \circ f$, and $\mu(g \circ f)=\mu(g) \cdot \mu(f)$.
\end{proposition}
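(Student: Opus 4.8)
\section*{Proof proposal}

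The plan is to reduce to the case where $X$ is an atom and then invoke multiplicativity of $\mu$ on genuine morphisms of atoms (axiom~(b) of Definition~\ref{defn:meas}); everything else is bookkeeping with the two summation conventions.

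First I would set up the reduction. Write $X=\bigsqcup_i X_i$ into atoms, and for each $i$ put $Y_i=g^{-1}(X_i)$ and $Z_i=(g\circ f)^{-1}(X_i)=f^{-1}(Y_i)$, with restrictions $g_i\colon Y_i\to X_i$ and $(g\circ f)_i\colon Z_i\to X_i$. By definition $g\circ f$ is $\mu$-constant exactly when $\mu((g\circ f)_i)$ is independent of $i$, and $\mu(g\circ f)$ is then this common value; so it suffices to prove $\mu((g\circ f)_i)=\mu(g)\,\mu(f)$ for each $i$. The point that makes the reduction legitimate is that $\mu$-constancy of $f$ is inherited by the restriction $f|_{Z_i}\colon Z_i\to Y_i$: in a pre-Galois category any morphism out of an atom factors through a unique coproduct summand, so every atom of $Y$ lies over exactly one $X_i$; hence the atoms of $Y_i$ are precisely the atoms of $Y$ over $X_i$, and the value $\mu$ assigns to $f$ on each of their fibers is the same constant $\mu(f)$. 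Thus $f|_{Z_i}$ is $\mu$-constant with $\mu(f|_{Z_i})=\mu(f)$, while $g_i$ is automatically $\mu$-constant since its target is an atom. As $(g\circ f)_i=g_i\circ f|_{Z_i}$, we are reduced to the case of atomic target.

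Now assume $X$ is an atom. Decompose $Y=\bigsqcup_a Y_a$ into atoms with restrictions $g_a\colon Y_a\to X$, and decompose each $Z_a:=f^{-1}(Y_a)=\bigsqcup_b Z_{ab}$ into atoms with restrictions $f_{ab}\colon Z_{ab}\to Y_a$. Each composite $g_a\circ f_{ab}\colon Z_{ab}\to X$ is then an honest morphism of atoms, so axiom~(b) gives $\mu(g_a\circ f_{ab})=\mu(g_a)\,\mu(f_{ab})$. Applying the convention $\mu(h)=\sum_j\mu(h_j)$ for a map $h$ into an atom (introduced just after Definition~\ref{defn:meas}) to $g\circ f$, then this multiplicativity, then regrouping and applying the same convention to each $f_a\colon Z_a\to Y_a$, I compute
\[
\mu(g\circ f)=\sum_{a,b}\mu(g_a\circ f_{ab})=\sum_a\mu(g_a)\sum_b\mu(f_{ab})=\sum_a\mu(g_a)\,\mu(f_a).
\]
Since $f$ is $\mu$-constant, $\mu(f_a)=\mu(f)$ for every $a$, so the right-hand side equals $\mu(f)\sum_a\mu(g_a)=\mu(f)\,\mu(g)$, the last step using $\sum_a\mu(g_a)=\mu(g)$ (the atomic-target convention again, as $X$ is an atom). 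This completes the atomic case and hence, by the reduction, the proposition.

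The only genuine mathematical input is axiom~(b), which furnishes multiplicativity for the atom-to-atom composites $g_a\circ f_{ab}$; the rest is organizing the double decomposition of $Z$. I expect the step needing the most care to be the reduction paragraph — specifically, checking that $\mu$-constancy descends to the restriction over a single atom $X_i$ — since this rests on the pre-Galois factorization property guaranteeing that each atom of $Y$ sits over a single atom of $X$, so that decomposing $X$ never splits a fiber of $f$.
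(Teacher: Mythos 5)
Your proof is correct and follows essentially the same route as the paper's: decompose $X$ into atoms, decompose the fibers of $g$ over each atom into atoms, apply axiom~(b) of Definition~\ref{defn:meas} to the atom-to-atom composites, and track everything with the summation convention. The only difference is bookkeeping — you make explicit two steps the paper leaves implicit, namely the reduction to atomic target (where $\mu$-constancy restricts over a single atom of $X$ because each atom of $Y$ maps into exactly one summand) and the further decomposition of $Z$ into the atoms $Z_{ab}$ so that axiom~(b) applies literally, whereas the paper writes $\mu(g_{i,j}\circ f_{i,j})=\mu(g_{i,j})\,\mu(f_{i,j})$ directly for maps whose source need not be atomic.
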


\begin{proof}
Write $X=\bigsqcup_{i=1}^n X_i$ with $X_i$ an atom. Let $Y_i=g^{-1}(Y)$ and $Z_i=f^{-1}(Y_i)$, and let $f_i$ and $g_i$ be the restrictions of $f$ and $g$ to $Z_i$ and $Y_i$. Write $Y_i=\bigsqcup_{j=1}^{m(i)} Y_{i,j}$, with $Y_{i,j}$ an atom, let $Z_{i,j}=g^{-1}(Y_{i,j})$, and let $f_{i,j}$ and $g_{i,j}$ be the restrictions of $f$ and $g$ to $Z_{i,j}$ and $Y_{i,j}$. We have
\begin{displaymath}
\mu(g_i \circ f_i) = \sum_{j=1}^{m(i)} \mu(g_{i,j} \circ f_{i,j}) = \sum_{j=1}^{m(i)} \mu(g_{i,j}) \mu(f_{i,j}) = \mu(f) \sum_{j=1}^{m(i)} \mu(g_{i,j}) = \mu(f) \mu(g_i) = \mu(f) \mu(g),
\end{displaymath}
and so the result follows.
\end{proof}

\subsection{Measures from degree functions} \label{ss:meas-compare}

Fix a finitely-powered regular category $\cE$. Let $\fG=\fG_1$ be the pro-oligomorphic group associated to $\cE^s_1$, let $\cC_0=\bS(\fG)$ be the category of finitary smooth $\fG$-sets, and let $\cC$ be the order in $\cC_0$ consisting those objects whose atoms have the form $\fB(X)$, with $X \in \cE^s_1$. (This is indeed an order by Proposition~ \ref{prop:B-prod}.)

\begin{construction} \label{con:main}
Let $\nu$ be a degree function for $\cE$ valued in a ring $k$. We define a measure $\mu$ for $\cC$ valued in $k$ as follows. Let $f \colon Y \to X$ be a surjection of principal objects in $\cE$. Put
\begin{displaymath}
\mu(f \colon \fB(Y) \to \fB(X)) = \sum_{Z \subset Y, f(Z)=X} \bmu(Z,Y) \cdot \nu(f \vert_Z),
\end{displaymath}
where $\bmu$ is the M\"obius function of the lattice $\Sub(Y)$. We note that
\begin{displaymath}
\nu(f) = \sum_{Z \subset Y, f(Z)=X} \mu(f \colon \fB(Z) \to \fB(X))
\end{displaymath}
by M\"obius inversion, and so one can recover $\nu$ from $\mu$.
\end{construction}

\begin{remark}
Construction~\ref{con:main} is essentially \cite[(8.5)]{Knop2}: our $\mu(f)$ is Knop's $\omega_e$.
\end{remark}

The following proposition shows that $\mu$ is indeed a measure.

\begin{proposition}
The above rule $\mu$ is a measure for $\cC$.
\end{proposition}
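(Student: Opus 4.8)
The plan is to verify the three axioms of Definition~\ref{defn:meas} directly, translating everything back into $\cE$. By the embedding $X\mapsto\fB(X)$ of Section~\ref{s:embed1}, a morphism of atoms $\fB(Y)\to\fB(X)$ in $\cC$ is the same as a surjection $f\colon Y\to X$ of principal objects, and by Proposition~\ref{prop:B-prod} the fiber product $\fB(Y)\times_{\fB(X)}\fB(W)$ decomposes into the atoms $\fB(V)$ indexed by the ample subobjects $V$ of $Y\times_X W$, the map $\fB(V)\to\fB(W)$ being the projection. I would first record the M\"obius-inversion identity from Construction~\ref{con:main},
\begin{equation} \label{eq:star}
\nu(f) = \sum_{\substack{Z\subseteq Y\\ f(Z)=X}} \mu\bigl(f\colon \fB(Z)\to\fB(X)\bigr),
\end{equation}
which holds because $\{Z\subseteq Y: f(Z)=X\}$ is a filter in $\Sub(Y)$: for $W$ in this filter every $Z\supseteq W$ lies in it, so the inner M\"obius sum $\sum_{W\subseteq Z\subseteq Y}\bmu(W,Z)$ collapses to $\delta_{W,Y}$.

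Axiom (a) is immediate: if $f$ is an isomorphism the only $Z\subseteq Y$ with $f(Z)=X$ is $Z=Y$, so the defining sum reduces to $\bmu(Y,Y)\nu(f)=\nu(f)=1$ by axiom (a) of Definition~\ref{defn:degree}. For axiom (b) I would expand $\mu(g\circ f)$ for surjections $f\colon Z\to Y$ and $g\colon Y\to X$, insert the multiplicativity $\nu(gf|_{Z'})=\nu(g|_{f(Z')})\cdot\nu(f|_{Z'})$ coming from axiom (b) of the degree function, and reorganize the sum over $Z'\subseteq Z$ according to the image $Y'=f(Z')\subseteq Y$. The terms carrying $\nu(g|_{Y'})$ are then controlled by partial M\"obius sums of $\nu(f|_{\cdot})$ over the fibers of the lattice map $\Sub(Z)\to\Sub(Y)$, $Z'\mapsto f(Z')$, and I expect these to collapse, by the same filter/telescoping mechanism used for \eqref{eq:star}, to the product $\mu(f)\mu(g)$.

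The main work is axiom (c), the base-change additivity, which is where I expect the real obstacle. Using Proposition~\ref{prop:B-prod} it reads
\begin{equation} \label{eq:c}
\mu\bigl(f\colon \fB(Y)\to\fB(X)\bigr) = \sum_{V\text{ ample}\subseteq Y\times_X W} \mu\bigl(\pi_W\colon \fB(V)\to\fB(W)\bigr)
\end{equation}
for any second surjection $h\colon W\to X$ of principal objects. I would expand the right-hand side via Construction~\ref{con:main} and reparametrize: a subobject $V'\subseteq Y\times_X W$ with $\pi_W(V')=W$ is the same datum as its image $Z:=\pi_Y(V')\subseteq Y$ (which automatically satisfies $f(Z)=X$, since $\pi_X(V')=h(\pi_W(V'))=X$) together with an ample subobject $V'$ of $Z\times_X W$. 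The ``top'' term $V'=Y\times_X W$ contributes $\nu(\pi_W\colon Y\times_X W\to W)$, which equals $\nu(f)$ by base-change invariance (axiom (c) of Definition~\ref{defn:degree}), since $\pi_W$ is the base change of $f$ along $h$; more generally this invariance rewrites each $\nu(f|_Z)$ occurring in $\mu(f)$ as $\nu(\pi_W|_{Z\times_X W})$, so after substitution both sides of \eqref{eq:c} become sums of the very same quantities $\nu(\pi_W|_{V'})$.

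At that point the content of \eqref{eq:c} is purely combinatorial: I must show the total coefficient of each $\nu(\pi_W|_{V'})$ matches. On the right this coefficient is $\sum_{V'\subseteq V\subseteq Y\times_X W,\ \pi_Y(V)=Y}\bmu(V',V)$, a M\"obius sum over the part of the interval $[V',\,Y\times_X W]$ cut out by the requirement that $V$ surject onto $Y$. I expect this to be governed by the Galois connection between $\Sub(Y\times_X W)$ and $\Sub(Y)$ furnished by image and preimage along $\pi_Y$: the condition $\pi_Y(V)=Y$ is a closure/join condition, so a Rota/Weisner-type M\"obius identity should force these partial sums to vanish except on the saturated subobjects $V=\pi_Y^{-1}(Z)$, reproducing exactly the coefficients $\bmu(Z,Y)$ in the defining sum for $\mu(f)$. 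Establishing this lattice identity — that the M\"obius function of $\Sub(Y)$ is recovered from that of $\Sub(Y\times_X W)$ through the projection — is the crux; the degree-function axioms enter only through multiplicativity (for (b)) and base-change invariance (for (c)), with all remaining difficulty being the bookkeeping of M\"obius functions on subobject lattices.
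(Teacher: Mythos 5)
Your proposal has a genuine gap at axiom (b), and it is exactly the step you wave at with ``I expect these to collapse, by the same filter/telescoping mechanism used for (star).'' After inserting $\nu(gf|_{Z'})=\nu(g|_{f(Z')})\cdot\nu(Z'\to f(Z'))$ and grouping by $Y'=f(Z')$, what you need, coefficientwise, is
\[
\sum_{\substack{Z'\subseteq Z \\ f(Z')=Y'}} \bmu(Z',Z)\,\nu\bigl(Z'\to Y'\bigr) \;=\; \bmu(Y',Y)\cdot\mu\bigl(f\colon \fB(Z)\to\fB(Y)\bigr),
\]
and this is \emph{not} the same mechanism as your inversion identity. There the weight $\nu(f|_W)$ was constant while the other variable ran over a filter, so the bare recursion $\sum_{W\subseteq Z\subseteq Y}\bmu(W,Z)=\delta_{W,Y}$ applied. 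Here the weights $\nu(Z'\to Y')$ vary within each fiber of the image map $\Sub(Z)\to\Sub(Y)$; if you try to fix this by a second Möbius inversion (expanding $\nu(Z'\to Y')$ into $\mu$-terms), you are left with inner sums of the shape $\sum_{Z''\subseteq Z'\subseteq f^{-1}(Y')}\bmu(Z',Z)$, in which the top argument of $\bmu$ lies outside the interval of summation — these do not telescope to deltas and demand a further closure-type argument. The displayed identity is essentially the content of Knop's multiplicativity lemma \cite[Lemma~8.4]{Knop2}, which takes real work; the paper does not reprove it but cites Knop for axiom (b), and as written your plan for (b) would stall. The honest fixes are to carry out that argument or to cite Knop.

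The rest of the proposal is in good shape. Axiom (a), your inversion identity (which is the formula already recorded in Construction~\ref{con:main}), and your reduction of (c) via Proposition~\ref{prop:B-prod} are all correct, including the reparametrization of subobjects of $Y\times_X W$ surjecting onto $W$ and the observation that $f(Z)=X$ is automatic. For (c) your route is a valid repackaging of the paper's: you fix $V'$ and evaluate $\sum_{V'\subseteq V,\ \pi_Y(V)=Y}\bmu(V',V)$ by Rota's closure/Galois-connection theorem for the closure operator $V\mapsto \pi_Y^{-1}(\pi_Y(V))$ on $\Sub(Y\times_X W)$, whose closed elements are exactly the $Z\times_X W$ (here you should note that the closed-set lattice is isomorphic to $\Sub(Y)$ because base change of the surjection $W\to X$ along $Z\to X$ is surjective, so $\pi_Y(Z\times_X W)=Z$). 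The paper instead Möbius-inverts each $\nu(f|_Z)$ into $\mu$-terms over pulled-back subobjects, swaps the two sums, and collapses the inner sum $\sum_{g'(Z')\subseteq Z\subseteq Y}\bmu(Z,Y)=\delta_{g'(Z'),Y}$ — which amounts to an inline proof of precisely the instance of the closure theorem you invoke. So for (c) the two arguments are equivalent, and yours is complete once you cite or prove that classical lattice identity rather than merely expecting it; the only unrecoverable-as-written defect is axiom (b).
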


\begin{proof}
We must verify the three axioms of Definition~\ref{defn:meas}. Axiom~(a) is clear. Axiom~(b) was already proved by Knop \cite[Lemma~8.4]{Knop2}. We now handle~(c). Consider a cartesian diagram of principal objects in $\cE$
\begin{displaymath}
\xymatrix@C=3em{
Y' \ar[r]^{g'} \ar[d]_{f'} & Y \ar[d]^f \\
X' \ar[r]^g & X }
\end{displaymath}
where $f$ and $f'$ are surjective. Let $Y'_i$, for $1 \le i \le n$, be the ample subobjects of $Y'$, and let $f'_i \colon Y'_i \to X'$ be the restriction of $f'$. Then we have a cartesian diagram in $\cC$
\begin{displaymath}
\xymatrix@C=3em{
\coprod_{i=1}^n \fB(Y'_i) \ar[r]^-{g'} \ar[d] & \fB(Y) \ar[d]^f \\
\fB(X') \ar[r]^-g & \fB(X) }
\end{displaymath}
Note that every cartesian diagram in $\cC$ where the top right, bottom left, and bottom right objects are atoms has this form, and so it suffices to verify~(c) for such diagrams. In this setting, (c) takes the form
\begin{displaymath}
\mu(f \colon \fB(Y) \to \fB(X)) = \sum_{i=1}^n \mu(f'_i \colon \fB(Y'_i) \to \fB(X')).
\end{displaymath}
Call the left side above $\alpha$ and the right side $\beta$. By definition of $\mu$, we have
\begin{displaymath}
\alpha = \sum_{Z \subset Y, f(Z)=X} \bmu(Z,Y) \cdot \nu(f \vert_Z).
\end{displaymath}
Now, the pullback of $Z \to X$ along $X' \to X$ is the map $(g')^{-1}(Z) \to X'$. Since $\nu$ is invariant under base change, we thus find
\begin{displaymath}
\alpha = \sum_{Z \subset Y, f(Z)=X} \bmu(Z,Y) \cdot \nu(f' \vert_{(g')^{-1}(Z)})
\end{displaymath}
We now express $\nu$ in terms of $\mu$ using M\"obius inversion:
\begin{displaymath}
\alpha = \sum_{\substack{Z \subset Y\\ f(Z)=X}} \sum_{\substack{Z' \subset (g')^{-1}(Z) \\ f'(Z')=X'}} \bmu(Z,Y) \cdot \mu(f' \colon \fB(Z') \to \fB(X'))
\end{displaymath}
We now switch the order of summation. Thus we first let $Z'$ vary over all subobjects of $Y'$ that surject onto $X'$, and then let $Z$ vary over subobjects of $Y$ that contain $g'(Z')$; note that this ensures $Z$ surjects onto $X$. We thus have:
\begin{displaymath}
\alpha = \sum_{\substack{Z' \subset Y' \\ f'(Z')=X'}} \big( \sum_{g'(Z') \subset Z \subset Y} \bmu(Z,Y) \big) \cdot \mu(f' \colon \fB(Z') \to \fB(X'))
\end{displaymath}
The inner sum vanishses unless $f'(Z')=Y$, in which case it equals~1. We thus find
\begin{displaymath}
\alpha = \sum_{Z'} \mu(f' \colon \fB(Z') \to \fB(X')),
\end{displaymath}
where the sum is over subobjects $Z'$ of $Y$ such that $f'(Z')=X'$ and $g'(Z')=Y$. We are thus exactly summing over the ample subobjects of $Y'$, and so the above sum is equal to $\beta$, as required.
\end{proof}

\begin{corollary}
There is a ring homomorphism
\begin{displaymath}
\phi \colon \Theta(\cC) \to \bZ[\Pi(\cE)]
\end{displaymath}
uniquely characterized by the following property: if $f \colon Y \to X$ is a surjection of principal objects in $\cE$ then
\begin{displaymath}
\phi([f \colon \fB(Y) \to \fB(X)]) = \sum_{Z \subset Y, f(Z)=X} \bmu(Z,Y) \cdot \{ f \vert_Z \}.
\end{displaymath}
\end{corollary}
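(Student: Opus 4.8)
The plan is to deduce this immediately from the universal properties of the two rings involved, together with the preceding proposition. Recall that giving a degree function for $\cE$ valued in a ring $k$ is the same as giving a ring homomorphism $\bZ[\Pi(\cE)] \to k$, and giving a measure for $\cC$ valued in $k$ is the same as giving a ring homomorphism $\Theta(\cC) \to k$. The corollary is then a formal repackaging of Construction~\ref{con:main} applied to the tautological degree function.

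First I would take $k = \bZ[\Pi(\cE)]$ and let $\nu$ be the tautological degree function corresponding to the identity homomorphism $\bZ[\Pi(\cE)] \to \bZ[\Pi(\cE)]$, so that $\nu(f) = \{f\}$ for every surjection $f$ of principal objects. Applying Construction~\ref{con:main}, together with the preceding proposition establishing that its output is a measure, to this $\nu$ produces a measure $\mu$ for $\cC$ valued in $\bZ[\Pi(\cE)]$, given on morphisms of atoms by $\mu(f \colon \fB(Y) \to \fB(X)) = \sum_{Z \subset Y,\, f(Z) = X} \bmu(Z,Y) \cdot \{ f \vert_Z \}$. By the universal property of $\Theta(\cC)$, this measure corresponds to a unique ring homomorphism $\phi \colon \Theta(\cC) \to \bZ[\Pi(\cE)]$ satisfying $\phi([g]) = \mu(g)$ for every morphism $g$ of atoms; in particular $\phi$ satisfies the displayed formula on surjections of principal objects.

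For the uniqueness clause I would observe that the morphisms of atoms in $\cC$ are exactly the maps $\fB(Y) \to \fB(X)$ induced by surjections $f \colon Y \to X$ of principal objects: the fully faithful functor $X \mapsto \fB(X)$ identifies $\Hom_{\cE^s_1}(Y,X)$ with $\Hom_{\fG}(\fB(Y), \fB(X))$, and the morphisms of $\cE^s_1$ are precisely the surjections of principal objects. Since the classes $[g]$ of morphisms of atoms generate $\Theta(\cC)$ as a ring, the prescribed values on surjections of principal objects determine $\phi$ completely. I do not expect any genuine obstacle: the mathematical content is entirely contained in the preceding proposition, and the only point requiring care is this identification of the morphisms of atoms in $\cC$ with surjections of principal objects, which is what pins down the ring generators of $\Theta(\cC)$.
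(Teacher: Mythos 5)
Your proposal is correct and matches the paper's proof, which simply says to apply Construction~\ref{con:main} to the universal degree function; you have spelled out the same argument in full, including the (correct) identification of the atoms' morphisms in $\cC$ with surjections of principal objects via the fully faithful functor $X \mapsto \fB(X)$, which justifies the uniqueness clause.
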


\begin{proof}
Apply the Construction~\ref{con:main} to the universal degree function.
\end{proof}

\subsection{Characterization}

Maintain the above set-up. We have just seen that every degree function gives rise to a measure. We now investigate which measures are thus obtained. For a principal object $X$ of $\cE$, define $\fA_1(X)$ to be the union of $\fB(Y)$ over the principal subobjects $Y$ of $X$. This is an object of $\cC$, and naturally a $\fM^{\circ}$-subset of $\fA(X)$.

\begin{proposition} \label{prop:char}
Let $\mu$ be a measure for $\cC$. Then the following are equivalent:
\begin{enumerate}
\item The measure $\mu$ arises from a degree function via Construction~\ref{con:main}.
\item For every surjection $Y \to X$ of principal objects, the induced map $\fA_1(Y) \to \fA_1(X)$ is $\mu$-constant (see \S \ref{ss:constant}).
\end{enumerate}
\end{proposition}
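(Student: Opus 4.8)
The bridge between the two conditions is the rule $\nu_\mu$ recovered from $\mu$ by M\"obius inversion: for a surjection $g\colon B\to A$ of principal objects set $\nu_\mu(g)=\sum_{W\subset B,\ g(W)=A}\mu(\fB(W)\to\fB(A))$. By the inversion noted in Construction~\ref{con:main}, $\nu_\mu$ is the unique rule whose image under the Construction is $\mu$, and when $\mu$ does arise from a degree function, $\nu_\mu$ is that degree function. The first step is a bookkeeping identity. For a surjection $f\colon Y\to X$ of principal objects and a principal subobject $V\subset X$, the fibre of the induced map $\fA_1(Y)\to\fA_1(X)$ over the atom $\fB(V)$ is $\coprod\fB(W)$, the coproduct over principal $W\subset Y$ with $f(W)=V$. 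Since $V$ is principal and $W$ surjects onto $V$, such $W$ are automatically principal, and they are exactly the subobjects of $Y\times_X V$ that surject onto $V$. Hence the $\mu$-value of this fibre is $\nu_\mu(Y\times_X V\to V)$. In particular, $\fA_1(Y)\to\fA_1(X)$ is $\mu$-constant if and only if $\nu_\mu(Y\times_X V\to V)$ is independent of the principal subobject $V\subset X$, and the common value (taking $V=X$) is then $\nu_\mu(f)$.

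For (a)$\Rightarrow$(b): if $\mu$ comes from a degree function $\nu$, then $\nu_\mu=\nu$, and $Y\times_X V\to V$ is the base change of $f$ along the inclusion $V\hookrightarrow X$. Axiom~(c) of Definition~\ref{defn:degree} gives $\nu(Y\times_X V\to V)=\nu(f)$ for every $V$, so the fibre value above is independent of $V$ and $\fA_1(Y)\to\fA_1(X)$ is $\mu$-constant.

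For (b)$\Rightarrow$(a): define $\nu(f)$ to be the ($\mu$-constant, by hypothesis) value of $\fA_1(Y)\to\fA_1(X)$; by the identity above this equals $\nu_\mu(f)$. I must check $\nu$ is a degree function. Axiom~(a) is immediate, since $\fA_1(Y)\to\fA_1(X)$ is an isomorphism when $f$ is. For axiom~(b), $\fA_1$ is functorial and sends a composite $Z\to Y\to X$ to $\fA_1(Z)\to\fA_1(Y)\to\fA_1(X)$; both factors are $\mu$-constant by hypothesis, so Proposition~\ref{prop:mu-const-2} (using $\fA_1(X)\supset\fB(X)$ non-empty) gives that the composite is $\mu$-constant with value the product, i.e.\ $\nu(g\circ f)=\nu(g)\nu(f)$. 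For axiom~(c), the essential point is the base-change compatibility $\fA_1(Y\times_X X')=\fA_1(Y)\times_{\fA_1(X)}\fA_1(X')$, which identifies $\fA_1(Y\times_X X')\to\fA_1(X')$ with the base change of $\fA_1(Y)\to\fA_1(X)$ along $\fA_1(X')\to\fA_1(X)$; Proposition~\ref{prop:mu-const-1} then yields $\nu(f')=\nu(f)$. Finally, since $\nu=\nu_\mu$, Construction~\ref{con:main} applied to $\nu$ returns $\mu$ on every generating map $\fB(Y)\to\fB(X)$ by M\"obius inversion, and hence equals $\mu$ as a measure.

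The main obstacle is the base-change compatibility of $\fA_1$ used in axiom~(c). Here one starts from $\fA(Y\times_X X')=\fA(Y)\times_{\fA(X)}\fA(X')$ and cuts out the principal part: a subobject $W'$ of $Y\times_X X'$ is principal if and only if its image in $Y$ is principal (the map $W'\to\bone$ factors through this image, which is a surjective quotient of $W'$), and likewise its image in $X'$ is principal precisely when $W'$ is. This matches the atoms of $\fA_1(Y)\times_{\fA_1(X)}\fA_1(X')$ with those of $\fA_1(Y\times_X X')$. A second point requiring care is the M\"obius-inversion step: the relevant index set, namely the subobjects of $Y$ surjecting onto $X$, is an up-set in $\Sub(Y)$, so every interval between its members is contained in it and the M\"obius function of $\Sub(Y)$ restricts correctly, making the zeta and M\"obius transforms mutually inverse.
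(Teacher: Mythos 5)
Your proof is correct and follows essentially the same route as the paper: both directions hinge on computing the fibre of $\fA_1(Y) \to \fA_1(X)$ over each atom $\fB(V)$ as $\coprod \fB(W)$ with $f(W)=V$, using M\"obius inversion from Construction~\ref{con:main} to pass between $\mu$ and $\nu$, and invoking Propositions~\ref{prop:mu-const-1} and~\ref{prop:mu-const-2} for the degree-function axioms. The only difference is expository: you spell out two details the paper leaves implicit, namely the base-change compatibility $\fA_1(Y \times_X X') = \fA_1(Y) \times_{\fA_1(X)} \fA_1(X')$ and the up-set justification for restricting the M\"obius function of $\Sub(Y)$, both of which are verified correctly.
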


\begin{proof}
Assume (b) holds. Define $\nu(f) = \mu(\fA_1(Y) \to \fA_1(X))$. It follows from Propositions~\ref{prop:mu-const-1} and~\ref{prop:mu-const-2} that $\nu$ is a degree function. By computing the measure of $\fA_1(Y) \to \fA_1(X)$ after taking the base change along $\fB(X) \to \fA_1(X)$, we find
\begin{displaymath}
\nu(f) = \sum_{Y' \subset Y, f(Y')=X} \mu(f \colon \fB(Y') \to \fB(X)),
\end{displaymath}
which shows that $\mu$ comes from $\nu$ via the above construction.

Now suppose that $\mu$ comes from the degree function $\nu$. Let $f \colon Y \to X$ be a surjection of principal objects. Let $X_1, \ldots, X_n$ be the principal subobjects of $X$, and let $\fY_i$ be the inverse image of $X_i$ in $\fA_1(Y)$ under the map induced by $f$. We claim that
\begin{displaymath}
\mu(\fY_i \to \fB(X_i)) = \nu(f)
\end{displaymath}
for all $i$, which will show that (b) holds. Let $Y_i$ be the inverse image of $X_i$ in $Y$, and let $f_i$ be the restriction of $f$ to $Y_i$. By definition of degree function, we have $\nu(f)=\nu(f_i)$. Since $\mu$ is associated to $\nu$, we have
\begin{displaymath}
\nu(f_i) = \sum_{Z \subset Y_i, f(Z)=X_i} \mu(\fB(Z) \to \fB(X)).
\end{displaymath}
But the right side is exactly $\mu(\fY_i \to \fB(X_i))$, and so the result follows.
\end{proof}

\section{Tensor categories}

The purpose of this section is to prove the second part of Theorem~\ref{mainthm2}, which relates the tensor categories defined by Knop to those defined by Harman--Snowden.

\subsection{Knop's category}

Let $\cE$ be a finitely-powered regular category and let $\nu$ be a degree function on $\cE$ valued in a ring $k$. Recall that an object $X$ of $\cE$ is \defn{principal} if $X \to \bone$ is surjective. We now recall the construction of Knop's tensor category. For the purposes of this paper, a \defn{tensor category} is a $k$-linear category equipped with a symmetric monoidal structure that is $k$-bilinear.

\begin{construction}
We define a $k$-linear tensor category $\TT^0(\cE; \nu)$ as follows.
\begin{itemize}
\item Objects are formal symbols $[X]$ where $X$ is a principal object of $\cE$.
\item Morphisms $[X] \to [Y]$ are formal $k$-linear combinations of symbols $[A]$, where $A$ is a principal relation from $X$ to $Y$.
\item Composition of morphisms $[A] \colon [X] \to [Y]$ and $[B] \colon [Y] \to [Z]$ is defined as follows. If $B \circ A$ is not principal then $[B] \circ [A]=0$. Otherwise, $[B] \circ [A] = \nu(f) \cdot [B \circ A]$, where $f \colon B \times_X A \to B \circ A$ is the natural surjection.
\item The tensor product on objects is given by $[X] \otimes [Y] = [X \times Y]$. On morphisms it is described as follows. Let $[A] \colon [X] \to [Y]$ and $[B] \colon [X'] \to [Y']$ be given. Then $[A] \otimes [B]=[A \times B]$, where $A \times B$ is regarded as a relation from $X \times X'$ to $Y \times Y'$ in the obvious manner.
\end{itemize}
Finally, we define $\TT(\cE; \nu)$ to be the additive--Karoubi\footnote{Meaning one first takes the additive envelope, and then the Karoubi envelope. See \cite[\S 2.3]{dblexp} for details.} envelope of $\TT^0(\cE; \nu)$
\end{construction}

\subsection{The Harman--Snowden category}

Let $\cC$ be an order in a pre-Galois category and let $\mu$ be a measure for $\cC$ valued in a ring $k$. For an objet $X$ of $\cC$, let $X^{\orb}$ be the ``orbit space'' of $X$, i.e., the set of atomic subobjects of $X$, and let $\cF(X)$ be the space of all functions $X^{\orb} \to k$. Note that if we identify $\cC$ with a category of $\fG$-sets, for a pro-oligomorphic group $\fG$, then $\cF(X)$ is the space of $\fG$-invariant $k$-valued functions on $X$. If $Y$ is a subobject of $X$, we let $1_Y \in \cF(X)$ be the function that is~1 on the atoms in $Y$, and~0 on the atoms not in $Y$. The functions $1_Y$, with $Y$ an atomic subobject of $X$, form a $k$-basis of $\cF(X)$.

Suppose $f \colon Y \to X$ is a morphism in $\cC$. We define a push-forward map
\begin{displaymath}
f_* \colon \cF(Y) \to \cF(X)
\end{displaymath}
by setting $f_*(1_A)=c \cdot 1_{f(A)}$ where $A$ is an atomic subobjet of $Y$ and $c=\mu(A \to f(A))$. This is equivalent to the push-forward operation defined in \cite[\S 3.4]{repst}. In particular, this operation satisfies many of the properties one would expect, e.g., $(fg)_*=f_* g_*$ when defined.

Let $X$ and $Y$ be objects of $\cC$. We define a \defn{$Y \times X$ matrix} to be an element of $\cF(Y \times X)$, and we write $\Mat_{Y,X}$ for the space of such matrices. If $A$ is a $Y \times X$ matrix and $B$ is a $Z \times Y$ matrix, then we define $BA$ to be the $Z \times X$ matrix $(p_{13})_*(p_{12}^*(A) p_{23}^*(B))$, where $p_{ij}$ is the projection map on $Z \times Y \times X$. The definitions here agree with those in \cite[\S 7]{repst}, with the caveat that here we are only considering $\fG$-invariant matrices. In particular, we see that matrix multiplication is associated (when defined) and the identity matrix (i.e., $1_{\Delta(X)} \in \Mat_{X,X}$) is the identity for matrix multiplication.

We are now ready to introduce the Harman--Snowden tensor category:

\begin{construction}
We define a $k$-linear tenor category $\uPerm(\cC; \mu)$ as follows:
\begin{itemize}
\item Objects are formal symbols $\Vec_X$, where $X$ is an object of $\cC$.
\item Morphisms $\Vec_X \to \Vec_Y$ are $Y \times X$ matrices.
\item Composition is given by matrix multiplication.
\item The tensor product is given by $\Vec_X \otimes \Vec_Y=\Vec_{X \times Y}$ on objects, and by the obvious analog of the Kronecker product on morphisms. \qedhere
\end{itemize}
\end{construction}

We refer to \cite[\S 8]{repst} for additional details. We note that the category $\uPerm(\cC; \mu)$ is additive. On objects, the direct sum is given by
\begin{displaymath}
\Vec_X \oplus \Vec_Y = \Vec_{X \amalg Y}.
\end{displaymath}
The category $\uPerm(\cC; \mu)$ is not Karoubian in general, though.

\subsection{Comparison}

Maintain the set-up from \S \ref{ss:meas-compare}. Fix a degree function $\nu$ on $\cE$, and let $\mu$ be the measure on $\cC$ associated to $\nu$ via Construction~\ref{con:main}. Recall that for an object $X$ of $\cE$, we write $\fA_1(X)$ for the union of the $\fB(Y)$ as $Y$ varies over principal subobjects of $X$. Observe that $\fA_1(X) \times \fA_1(Y) = \fA_1(X \times Y)$. For a subobject $A \subset X$, we write $1_A$ for the function $1_{\fA_1(A)}$ in $\cF(\fA_1(X))$. We require the following key lemma.

\begin{lemma}
Let $f \colon Y \to X$ be a morphism of principal objects. Let $A$ be a principal subobject of $Y$, and let $B=f(A)$ be its image. Then
\begin{displaymath}
f_*(1_A) = \nu(f \colon A \to B) \cdot 1_B.
\end{displaymath}
\end{lemma}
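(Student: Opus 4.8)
The plan is to expand both sides in the basis of atom-indicators and compare coefficients. First I would record how $f$ acts on atoms: the morphism $f \colon Y \to X$ induces $\fA_1(f) \colon \fA_1(Y) \to \fA_1(X)$, and since by Proposition~\ref{prop:resA} the atoms of $\fA_1(A)$ are the $\fB(Z)$ with $Z \subset A$ a principal subobject, this functor sends the atom $\fB(Z)$ to $\fB(f(Z))$ via the surjection $Z \twoheadrightarrow f(Z)$. Consequently, writing $1_A = 1_{\fA_1(A)} = \sum_{Z \subset A} 1_{\fB(Z)}$ and applying the definition of the push-forward term by term,
\[
f_*(1_A) = \sum_{Z \subset A} \mu\bigl(\fB(Z) \to \fB(f(Z))\bigr) \cdot 1_{\fB(f(Z))},
\]
a function supported on the atoms $\fB(W)$ with $W$ a principal subobject of $B=f(A)$.

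I would then fix a principal subobject $W \subset B$ and compute the coefficient of $1_{\fB(W)}$ on each side. On the right-hand side we have $\nu(f\colon A\to B)\cdot 1_B = \nu(f\colon A\to B)\sum_{W \subset B} 1_{\fB(W)}$, so the coefficient is simply $\nu(f\colon A\to B)$. On the left it is $\sum_{Z} \mu\bigl(\fB(Z)\to\fB(W)\bigr)$, the sum running over principal $Z \subset A$ with $f(Z)=W$. Setting $A_W := A\times_B W = (f\vert_A)^{-1}(W)$, the condition $f(Z)=W$ forces $Z\subset f^{-1}(W)$, hence $Z\subset A\cap f^{-1}(W)=A_W$; so this index set equals $\{Z\subset A_W : f(Z)=W\}$, and the lemma reduces to the identity
\[
\sum_{Z \subset A_W,\; f(Z)=W} \mu\bigl(\fB(Z)\to\fB(W)\bigr) = \nu(f\colon A\to B).
\]

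For this final identity I would invoke the two tools that built $\mu$. Applying the M\"obius-inversion formula recorded in Construction~\ref{con:main} to the surjection $A_W \to W$ of principal objects shows that the left-hand side equals $\nu(A_W \to W)$. Finally, $A_W \to W$ is the base change of $f\colon A\to B$ along the inclusion $W \hookrightarrow B$, so axiom~(c) of Definition~\ref{defn:degree} gives $\nu(A_W \to W)=\nu(f\colon A\to B)$, completing the comparison. I do not anticipate a genuine obstacle; the only points needing care are the bookkeeping that $A_W$ is again principal (it surjects onto the principal object $W$, being a base change of the surjection $A \twoheadrightarrow B$) and the coincidence of the two index sets, which is immediate as noted.
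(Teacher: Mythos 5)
Your proof is correct and is essentially the paper's argument unwound: the paper's one-line proof cites the proof of Proposition~\ref{prop:char}, whose content is exactly your atom-by-atom coefficient computation, namely that the coefficient of $1_{\fB(W)}$ in $f_*(1_A)$ equals $\nu(A_W \to W) = \nu(f \colon A \to B)$ via the M\"obius-inversion formula of Construction~\ref{con:main} and base-change invariance (axiom~(c) of Definition~\ref{defn:degree}). In the paper's language you have verified that $\fA_1(f) \colon \fA_1(A) \to \fA_1(B)$ is $\mu$-constant with value $\nu(f)$, so the two proofs coincide in substance.
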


\begin{proof}
By the proof of Proposition~\ref{prop:char}, the map $\fA_1(f) \colon \fA_1(A) \to \fA_1(B)$ is $\mu$-constant, and $\mu(\fA_1(f))=\nu(f)$. The result thus follows.
\end{proof}

\begin{remark}
When $\cE$ is the category of finite sets, the above lemma was proved in \cite[Lemma~5.4]{cantor} and \cite[Lemma~6.3]{cantor} (the two measures on $\cC$ were treated separately).
\end{remark}

We now arrive at our main theorem.

\begin{theorem} \label{thm:tensor}
There is an equivalence of tensor categories
\begin{displaymath}
\Phi \colon \TT(\cE; \nu) \to \uPerm(\cC; \mu)^{\rm kar}
\end{displaymath}
satisfying $\Phi([X])=\Vec_{\fA_1(X)}$.
\end{theorem}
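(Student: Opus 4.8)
The plan is to construct $\Phi$ first on the small category $\TT^0(\cE;\nu)$, verify it is a fully faithful, symmetric monoidal, $k$-linear functor into $\uPerm(\cC;\mu)$, and then extend it to the additive--Karoubi envelope, where full faithfulness is automatic and essential surjectivity is easy. On objects I set $\Phi([X])=\Vec_{\fA_1(X)}$. On a basis morphism $[A]\colon[X]\to[Y]$, where $A$ is a principal relation, i.e.\ a principal subobject of $Y\times X$, I set $\Phi([A])=1_{\fA_1(A)}=1_A$, regarded as a $\fA_1(Y)\times\fA_1(X)$-matrix via $\fA_1(Y)\times\fA_1(X)=\fA_1(Y\times X)$. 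Since the atoms of $\fA_1(Y\times X)$ are exactly the $\fB(A)$ with $A\subseteq Y\times X$ principal, the families $\{[A]\}_A$ and $\{1_{\fA_1(A)}\}_A$ correspond under a unitriangular change of basis (the $1_{\fA_1(A)}$ are related to the orbit indicators $1_{\fB(A)}$ by M\"obius inversion on $\Sub(A)$), so $\Phi$ is a $k$-linear isomorphism on each Hom-space, hence fully faithful on $\TT^0$ once functoriality is checked.

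For identities I check $\fA_1(\Delta_X)=\Delta(\fA_1(X))$: the principal subobjects of $\Delta_X\cong X$ are the $\Delta_{X'}$ for principal $X'\subseteq X$, whose orbits exhaust the diagonal of $\fA_1(X)=\coprod_{X'}\fB(X')$, so $\Phi([\Delta_X])=1_{\Delta(\fA_1(X))}$ is the identity matrix. The crux is composition. Given $[A]\colon[X]\to[Y]$ and $[B]\colon[Y]\to[Z]$, I compute the matrix product $1_B\cdot 1_A$ on $\fA_1(Z\times Y\times X)$. The two pullbacks multiply to the indicator of those tuples whose $(Y,X)$- and $(Z,Y)$-projections land in $A$ and $B$ respectively; this locus is precisely $\fA_1(B\times_Y A)$, using $p_{12}^{-1}(B)\cap p_{23}^{-1}(A)=B\times_Y A$ inside $Z\times Y\times X$ and that a subobject lands in $A$ iff its image does. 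Pushing forward along $p_{13}\colon Z\times Y\times X\to Z\times X$ and invoking the key lemma with source subobject $B\times_Y A$ and image $B\circ A$ yields $\nu(f)\cdot 1_{\fA_1(B\circ A)}$, where $f\colon B\times_Y A\to B\circ A$ is exactly Knop's structure map; this matches $\Phi([B]\circ[A])$. When $B\circ A$ is not principal, neither is $B\times_Y A$ (type is preserved by images), so $B\times_Y A$ has no principal subobjects, $1_{\fA_1(B\times_Y A)}=0$, and both sides vanish, matching Knop's convention. This composition identity is the main obstacle: it is where the M\"obius-twisted definition of $\mu$ (Construction~\ref{con:main}) and the key lemma conspire to turn Knop's $\nu$-weighted composition of relations into matrix multiplication, and one must track the $\fA_1$ versus $\fB$ bookkeeping carefully.

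Monoidality is then routine: $\fA_1(X)\times\fA_1(Y)=\fA_1(X\times Y)$ gives $\Phi([X]\otimes[Y])\cong\Phi([X])\otimes\Phi([Y])$, and on morphisms $1_{\fA_1(A)}\otimes 1_{\fA_1(B)}=1_{\fA_1(A\times B)}$ under the Kronecker product, with the associativity and symmetry constraints matching on the nose. Finally I extend $\Phi$ to $\TT(\cE;\nu)$, the additive--Karoubi envelope of $\TT^0(\cE;\nu)$. Since $\uPerm(\cC;\mu)^{\mathrm{kar}}$ is additive and idempotent-complete, the universal property of the additive--Karoubi envelope yields a unique $k$-linear extension, which is fully faithful because a fully faithful functor into an additive idempotent-complete category extends fully faithfully to the completion. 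For essential surjectivity, every object of $\cC$ is a finite coproduct of atoms $\fB(X)$ with $X$ principal, so it suffices to realize each $\Vec_{\fB(X)}$ in the image up to summands; this follows since $\fA_1(X)=\coprod_{X'}\fB(X')$ (over principal $X'\subseteq X$) exhibits $\Vec_{\fB(X)}$ as a direct summand of $\Phi([X])=\Vec_{\fA_1(X)}$. Hence every object of $\uPerm(\cC;\mu)^{\mathrm{kar}}$ is a summand of a sum of objects in the image, and $\Phi$ is an equivalence.
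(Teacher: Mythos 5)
Your proposal is correct and follows essentially the same route as the paper: define $\Phi^0$ on $\TT^0(\cE;\nu)$ by $[X]\mapsto \Vec_{\fA_1(X)}$ and $[A]\mapsto 1_{\fA_1(A)}$, verify composition via the identification $p_{12}^{-1}(B)\cap p_{23}^{-1}(A)=B\times_Y A$ and the key lemma $f_*(1_A)=\nu(f\colon A\to B)\cdot 1_B$ (with the same observation that both sides vanish when $B\circ A$ is not principal), get full faithfulness from the unitriangular relation between the $1_{\fA_1(A)}$ and the orbit indicators $1_{\fB(A)}$, and conclude by passing to additive--Karoubi envelopes, with essential surjectivity coming from $\Vec_{\fB(X)}$ being a summand of $\Vec_{\fA_1(X)}$. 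The only differences are cosmetic: you spell out the identity check and the basis argument in slightly more detail than the paper does.
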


\begin{proof}
We first define a functor $\Phi^0 \colon \TT^0(\cE; \nu) \to \uPerm(\cC; \mu)$. On objects, we put $\Phi^0([X])=\Vec_{\fA_1(X)}$. For a morphism $[A] \colon [X] \to [Y]$, we put $\Phi^0([A]) = 1_A$, where $1_A \in \cF(\fA_1(Y \times X))$ is regarded as a $\fA_1(Y) \times \fA_1(X)$ matrix. Suppose $[B] \colon [Y] \to [Z]$ is a second morphism. Let $D=B \times_Y A$, which is a subobject of $Z \times Y \times X$, and let $C$ be its image in $Z \times X$. The product of the matrices $1_A$ and $1_B$ is the push-forward of $1_D$ to $Z \times X$. By the previous lemma, this is $\nu(D \to C) 1_C$; note that this vanishes if $C$ is not principal. On the other hand, the composition $[B] \circ [A]$ is equal to $\nu(D \to C) [C]$ if $C$ is principal, and~0 otherwise. This shows that $\Phi^0$ is compatible with composition. It is clearly compatible with identity maps, and is thus a functor. It is fully faithful, as the $1_A$ form a basis for $\fF(\fA_1(Y \times X))$ as $A$ varies over principal subobjects of $Y \times X$.

We have natural identifications
\begin{align*}
\Phi^0([X] \otimes [Y]) &= \Phi^0([X \times Y]) = \Vec_{\fA_1(X \times Y)} = \Vec_{\fA_1(X) \times \fA_1(Y)} \\
&= \Vec_{\fA_1(X)} \otimes \Vec_{\fA_1(Y)} = \Phi^0([X]) \otimes \Phi^0([Y]),
\end{align*}
and so $\Phi^0$ is compatible with tensor products on objects. The above identifications are natural; we omit the proof. Thus $\Phi^0$ is a tensor functor.

Taking additive--Karoubi envelopes, $\Phi^0$ induces a functor $\Phi$ as in the statement of the theorem; note that $\uPerm(\cC; \mu)$ is additive, and thus its own additive envelope. The functor $\Phi$ is still fully faithful and a tensor functor. It is also essentially surjective. Indeed, if $X$ is a principal object of $\cE$ then $\Vec_{\fA_1(X)}$ is in the image of $\Phi^0$, and so its summand $\Vec_{\fB(X)}$ is in the essential image of $\Phi$. Since $\Phi$ is additive and every object of $\uPerm(\cC; \mu)^{\rm kar}$ is a summand of sums of $\Vec_{\fB(X)}$'s, the result follows.
\end{proof}

\section{The pre-Galois case} \label{s:ww}

\subsection{Set-up} \label{ss:ww-setup}

Let $\cE$ be an order in a pre-Galois category; to be concrete, we take $\cE$ to be an order in the category $\bS(G)$ of finitary smooth $G$-sets, for some pro-oligomorphic group $G$. One easily sees that $\cE$ is a finitely-powered regular category. In \S \ref{s:ww}, we examine this category from the perspective developed in this paper.

The final object $\bone$ of $\cE$ is the one-point set. It has two subobjects, the empty set $E_0$, and itself $E_1$; we thus have $\Lambda=\{0,1\}$. The principal (type~1) objects of $\cE$ are exactly the non-empty $G$-sets in $\cE$. We let $\fG=\fG_1$ be the pro-oligomorphic group associated to $\cE^s_1$, we let $\cC_0=\bS(\fG)$ be the category of finitary smooth $\fG$-sets, and we let $\cC$ be the order in $\cC_0$ consisting of objects whose atoms have the form $\fB(X)$ with $X \in \cE_1$. Proposition~\ref{prop:B-prod} shows that $\cC$ is closed under products, which implies it is an order.

\begin{example}
Suppose $G$ is the trivial group, so that $\cE$ is simply the category of finite sets. In this case, $\fG$ is the group of homeomorphisms of the Cantor set. This case is studied in detail in \cite{cantor}.
\end{example}

\subsection{Degree functions}

We now classify degree functions on $\cE$.

\begin{proposition}
The only degree function on $\cE$ is the trivial one.
\end{proposition}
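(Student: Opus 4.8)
The plan is to exploit a feature special to the pre-Galois situation that is unavailable for a general regular category: the category $\cE$, being an order in a pre-Galois category, is closed under finite coproducts, and these are \emph{disjoint} (they are computed as disjoint unions of $G$-sets). This lets me trivialize an arbitrary principal surjection by a single base change, using only axioms (a) and (c) of Definition~\ref{defn:degree}; multiplicativity (b) is not even needed.

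Let $\nu$ be a degree function on $\cE$ valued in a ring $k$, and let $f\colon Y\to X$ be an arbitrary surjection of principal objects. I must show $\nu(f)=1$. First I note that $X$ is principal: since $f$ is surjective and $Y$ has type $1$, so does $X$. Then I would form the coproduct map
\[
\tilde f = f\amalg \id_X \colon Y\amalg X \longrightarrow X\amalg X .
\]
Both $Y\amalg X$ and $X\amalg X$ are principal objects of $\cE$ (they are nonempty and surject onto $\bone$), and $\tilde f$ is a surjection, since a disjoint union of surjective $G$-maps is surjective; hence $\tilde f$ is a surjection of principal objects and $\nu(\tilde f)$ is defined.

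Next I would base change $\tilde f$ along the two coproduct inclusions $\iota_1,\iota_2\colon X\to X\amalg X$. Because coproducts in $\cE\subset\bS(G)$ are disjoint, the fiber product of $\tilde f$ along $\iota_i$ simply extracts the corresponding summand of $Y\amalg X$: the base change along $\iota_1$ is $f\colon Y\to X$, while the base change along $\iota_2$ is $\id_X\colon X\to X$. (Concretely, the relevant fiber products are computed set-theoretically with the diagonal $G$-action, so this is immediate.) Applying axiom (c) to each inclusion then gives
\[
\nu(f)=\nu(\tilde f)=\nu(\id_X)=1,
\]
the last equality being axiom (a), as $\id_X$ is an isomorphism. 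Since $f$ was arbitrary, $\nu$ is the trivial degree function.

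I expect no serious obstacle; the only points requiring care are the two verifications that $\tilde f$ really is a surjection in $\cE$ and that base change along a summand inclusion recovers the summand map, both routine in the extensive regular category $\bS(G)$ (and in the order $\cE$, which is closed under coproducts, fiber products, and subobjects). It is worth emphasizing conceptually why this does \emph{not} prove the analogous, and false, statement for a general finitely-powered regular category such as $\mathrm{FinSet}^{\op}$, whose nontrivial degree functions underlie Deligne's $\uRep(\fS_t)$: there the coproduct $X\amalg X$ is not disjoint (the category is not extensive), so base change along a coproduct inclusion does not extract a summand, and the trivializing step breaks down. Thus the proposition is really a statement about the extensivity of $\cE$ in the pre-Galois case.
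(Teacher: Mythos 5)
Your proof is correct and is essentially the paper's own argument: the paper likewise trivializes an arbitrary surjection $f \colon Y \to X$ by forming $f \amalg \id$ and applying axioms (a) and (c) to the two cartesian squares over the coproduct inclusions, the only cosmetic difference being that the paper uses $\bone$ as the auxiliary summand (so $f' \colon Y \amalg \bone \to X \amalg \bone$) where you use $X$. Your closing observation about why this fails for non-extensive regular categories such as $\mathrm{FinSet}^{\op}$ is a nice touch, and your remark that axiom (b) is unused matches the paper's remark that the argument applies verbatim to monoid-valued degree functions.
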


\begin{proof}
Let $\nu$ be a degree function on $\cE$, and let $f \colon Y \to X$ be a surjection of non-empty objects in $\cE$. Consider the following diagram
\begin{displaymath}
\xymatrix{
Y \ar[r] \ar[d]^f & Y \amalg \bone \ar[d]^{f'} & \bone \ar[l] \ar[d]^{\id} \\
X \ar[r] & X \amalg \bone & \bone \ar[l] }
\end{displaymath}
where $f'=f \amalg \id$. Both squares are cartesian, and so $\nu(f)=\nu(f')=\nu(\id)=1$.
\end{proof}

\begin{remark}
The above proof applies to degree functions valued in monoids, and thus shows that the universal monoid $\Pi(\cE)$ from \S \ref{ss:degree} is trivial.
\end{remark}

\subsection{The first measure}

Let $\alpha$ be the $\bZ$-valued measure on $\cC$ corresponding to the trivial degree function via Construction~\ref{con:main}. We now explicitly compute this measure. For an object $X$ of $\cE$, let $\rho(X)$ denote the number of atoms in $X$.

\begin{proposition}
We have $\alpha(\fB(X))=(-1)^{\rho(X)-1}$ for all non-empty objects $X$.
\end{proposition}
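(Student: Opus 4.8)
The plan is to compute $\alpha(\fB(X))$ directly from Construction~\ref{con:main} and then recognize the resulting sum as a value of the Möbius function on a Boolean lattice. By definition $\alpha(\fB(X))$ means $\alpha(\fB(X) \to \bone)$, where $\bone = \fB(\bone_{\cE})$ is the final object of $\cC$ (here $\bone_{\cE}$, the one-point $G$-set, is principal, and $\fB(\bone_{\cE})$ is a single point). This map is $\fB(f)$ for the unique morphism $f \colon X \to \bone_{\cE}$, which is a surjection of principal objects since $X$ is non-empty. As $\alpha$ arises from the trivial degree function $\nu$ (so $\nu \equiv 1$), Construction~\ref{con:main} gives
\[
\alpha(\fB(X)) = \sum_{\substack{Z \subset X \\ f(Z) = \bone_{\cE}}} \bmu(Z, X),
\]
where the Möbius function is that of the lattice $\Sub(X)$. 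The condition $f(Z) = \bone_{\cE}$ simply says that $Z$ is non-empty, so the sum runs over all non-empty subobjects $Z$ of $X$.

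Next I would isolate the missing term using the defining property of the Möbius function. The bottom element of $\Sub(X)$ is the empty subobject $\emptyset$ and its top element is $X$; since $X$ is non-empty these are distinct, so the total sum over the interval vanishes:
\[
\sum_{Z \subset X} \bmu(Z, X) = 0.
\]
Subtracting the $Z = \emptyset$ term from the sum in the previous display therefore yields
\[
\alpha(\fB(X)) = -\,\bmu(\emptyset, X).
\]

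It remains to evaluate $\bmu(\emptyset, X)$. Because $\cE$ is an order in $\bS(G)$, the subobjects of $X$ are exactly its $G$-stable subsets, i.e.\ the unions of orbits; hence $\Sub(X)$ is the Boolean lattice on the set of atoms of $X$, which has $\rho(X)$ elements. For a Boolean lattice the Möbius function from bottom to top is $(-1)$ raised to the number of atoms, so $\bmu(\emptyset, X) = (-1)^{\rho(X)}$. Combining this with the previous display gives $\alpha(\fB(X)) = (-1)^{\rho(X)-1}$, as claimed. There is no serious obstacle here: the one substantive point is the identification of $\Sub(X)$ with a Boolean lattice, which is immediate once one recalls that subobjects in an order inside $\bS(G)$ are determined by which orbits they contain, and the remainder is the standard Möbius computation.
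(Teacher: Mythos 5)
Your proof is correct and follows essentially the same route as the paper: both compute $\alpha(\fB(X))$ from Construction~\ref{con:main} with $\nu \equiv 1$, identify $\Sub(X)$ with the Boolean lattice on the $\rho(X)$ atoms, and evaluate the resulting M\"obius sum. The paper leaves that evaluation as ``a simple computation,'' which you carry out cleanly via the defining recursion $\sum_{Z \subset X} \bmu(Z,X)=0$, isolating the $Z=\emptyset$ term.
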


\begin{proof}
By definition, we have
\begin{displaymath}
\alpha(\fB(X))=\sum_{Y \subset X, Y \ne \emptyset} \mu(Y,X).
\end{displaymath}
Let $X=X_1 \sqcup \cdots \sqcup X_n$, where the $X_i$'s are atoms. For a subset $S$ of $[n]$, let $X_S=\bigcup_{i \in S} X_i$. Then the $X_S$ are exactly the subobjects of $X$ in $\cE$, and so the poset $\Sub(X)$ of subobjects is isomorphic to the poset $\Sub([n])$ of subsets of $[n]$. A simply computation then shows that the above sum is equal to $(-1)^{n-1}$.
\end{proof}

\begin{remark} \label{rmk:alpha}
The category $\uPerm(\cC; \alpha)$ always has nilpotents of non-zero trace, and thus does not admit an abelian envelope. To see this, let $\cE'$ be the category of finite sets, regarded as the subcategory of $\cE$ consisting of sets with trivial $G$-action, let $\cC'$ be the corresponding subcategory of $\cC$, and let $\alpha'$ be the restriction of $\alpha$ to $\cC'$. Then $\uPerm(\cC; \alpha)$ contains $\uPerm(\cC'; \alpha')$ as a tensor subcategory. By \cite[\S 6.2]{cantor}, the latter category contains nilpotents of non-zero trace.
\end{remark}

\subsection{The second measure}

We now construct a second measure on $\cC$, in some cases. We begin by characterizing regular measures on $\cE$ valued in $\bF_2$. For this, we introduce the following notion:

\begin{definition}
We say that $\cE$ is \defn{odd} if for every morphism of atoms $X \to Y$ and $X' \to X$, the fiber product $X' \times_X Y$ contains an odd number of atoms.
\end{definition}

\begin{proposition}
The category $\cE$ is odd if and only if it admits a regular measure valued in $\bF_2$; furthermore, such a measure is unique.
\end{proposition}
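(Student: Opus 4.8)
The plan is to observe that a regular $\bF_2$-valued measure is rigid---it is forced to be the constant rule assigning $1$ to every morphism of atoms---and then to recognize the single nontrivial measure axiom for this rule as exactly the oddness condition.

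First I would establish uniqueness. Since the only unit of $\bF_2$ is $1$, regularity forces $\mu(X)=1$ for every atom $X$. By the computation recorded just after Definition~\ref{defn:meas}, for a morphism of atoms $f\colon Y\to X$ one has $\mu(f)=\mu(Y)\mu(X)^{-1}$; with both values equal to $1$ this gives $\mu(f)=1$. Hence any regular $\bF_2$-valued measure coincides with the constant rule $\mu_0$ defined by $\mu_0(f)=1$ for every morphism of atoms, which settles the uniqueness clause (there is at most one such measure).

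Next I would settle existence by testing the axioms of Definition~\ref{defn:meas} for $\mu_0$. Axiom~(a) holds since $\mu_0\equiv 1$, and axiom~(b) holds since $1\cdot 1=1$, so all the content lies in axiom~(c). For a morphism of atoms $f\colon Y\to X$ and a base change along a morphism of atoms $X'\to X$, write $Y'=Y\times_X X'=\bigsqcup_{i=1}^{n}Y'_i$ for its decomposition into atoms. Axiom~(c) demands
\[
\mu_0(f)=\sum_{i=1}^{n}\mu_0(f'_i)=\sum_{i=1}^{n}1=n\cdot 1\in\bF_2,
\]
and the right-hand side equals $\mu_0(f)=1$ precisely when $n$, the number of atoms of $Y\times_X X'$, is odd. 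This is exactly the defining condition of oddness. Thus $\mu_0$ satisfies~(c) for all such diagrams if and only if $\cE$ is odd, and when it does it is visibly regular (its value on each atom is $1$, a unit). Combining this with the previous paragraph yields the equivalence together with uniqueness.

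The argument is light on substance, so the only real obstacle is bookkeeping: one must check that the cartesian diagrams appearing in axiom~(c) are precisely those governed by the oddness condition, and conversely. This is immediate here, since axiom~(c) is phrased for a morphism of atoms $f\colon Y\to X$ together with a base change $X'\to X$ along a morphism of atoms, and the fiber product $Y\times_X X'$ (formed in the ambient pre-Galois category, and a finite coproduct of atoms by Proposition~\ref{prop:B-prod}) is exactly the object whose atom count oddness constrains. I would therefore spend most of the care on matching the atoms of $Y\times_X X'$ with the fiber product set, and on confirming that no further diagrams need to be checked.
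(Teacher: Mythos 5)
Your proposal is correct and is essentially the paper's own argument, just written out in more detail: regularity over $\bF_2$ forces the constant rule $\mu\equiv 1$ via $\mu(f)=\mu(Y)\mu(X)^{-1}$, axioms (a) and (b) are trivial, and axiom (c) reduces to the parity of the number of atoms of $Y\times_X X'$, which is precisely the oddness condition. (One trivial nit: the finite atomic decomposition of the fiber product follows directly from the pre-Galois axioms for the order $\cE$, so the appeal to Proposition~\ref{prop:B-prod} is unnecessary there.)
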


\begin{proof}
A regular $\bF_2$-valued measure must assign~1 to each object, and is therefore unique. This formula defines a regular measure if and only if $\cE$ is odd: indeed, (a) and (b) of Definition~\ref{defn:meas} clearly hold, while~(c) is equivalent to $\cE$ being odd.
\end{proof}

We can now introduce the second measure.

\begin{proposition}
Suppose that $\cE$ is odd. Then $\cC$ admits a $\bZ$-valued measure $\beta$ satisfying $\beta(\fB(X))=(-2)^{\rho(X)-1}$ for all non-empty $X$ in $\cE$.
\end{proposition}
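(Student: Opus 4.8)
The plan is to define $\beta$ directly on morphisms of atoms and verify the three axioms of Definition~\ref{defn:meas}, the content being concentrated in axiom~(c). The atoms of $\cC$ are the sets $\fB(X)$ with $X$ a principal object of $\cE$, and since the functor $\cE^s_1\to\bS(\fG)$ is fully faithful, every morphism of atoms $\fB(Y)\to\fB(X)$ in $\cC$ comes from a unique surjection $Y\to X$ of principal objects. For such a surjection I set $\beta\bigl(\fB(Y)\to\fB(X)\bigr)=(-2)^{\rho(Y)-\rho(X)}$. A surjection of $G$-sets is surjective on orbit sets, so $\rho(Y)\ge\rho(X)$ and the exponent is a nonnegative integer; hence $\beta$ is genuinely $\bZ$-valued. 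Taking $X=\bone$ and using $\rho(\bone)=1$ recovers $\beta(\fB(X))=(-2)^{\rho(X)-1}$, as required.

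Axioms~(a) and~(b) are immediate, since an isomorphism has $\rho(Y)=\rho(X)$ and the exponent is additive along a composite $\fB(Z)\to\fB(Y)\to\fB(X)$. The real work is axiom~(c). Given surjections $Y\to X$ and $X'\to X$ of principal objects, Proposition~\ref{prop:B-prod} identifies the base change of $\fB(Y)\to\fB(X)$ along $\fB(X')\to\fB(X)$ with $\coprod_{W}\fB(W)$, the union over the ample subobjects $W$ of $P=Y\times_X X'$. Unwinding the definition of $\beta$, axiom~(c) is then equivalent to the numerical identity
\[
\sum_{W\ \mathrm{ample}}(-2)^{\rho(W)}=(-2)^{\rho(Y)+\rho(X')-\rho(X)}.
\]

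I would establish this by a weighted inclusion--exclusion on the orbit set of $P$. Writing $\cY,\cX',\cX$ for the orbit sets of $Y,X',X$, every orbit of $P$ lies over a \emph{compatible} pair $(a,b)\in\cY\times\cX'$ (one with common image $c$ in $\cX$), and the orbits over $(a,b)$ are precisely the orbits of the fiber product $a\times_c b$ of atoms. This is the single point at which oddness is used: it guarantees that each such fiber has an \emph{odd} number of orbits. An ample $W$ is a subset of the orbit set of $P$ meeting every row $a\in\cY$ and every column $b\in\cX'$; expanding these two covering conditions by inclusion--exclusion and summing $(-2)^{\rho(W)}$ over the free orbits turns each block into a factor $(1-2)^{\mathrm{odd}}=-1$. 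The whole expression then factors as a product over $c\in\cX$ of a local term $T_c$, and a short binomial computation (using $\sum_{i\ \mathrm{odd}}\binom{p}{i}=2^{p-1}$) gives $T_c=(-2)^{|\cY^c|+|(\cX')^c|-1}$, where $\cY^c$ and $(\cX')^c$ are the orbits over $c$; multiplying over $c$ produces exactly $(-2)^{\rho(Y)+\rho(X')-\rho(X)}$.

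The main obstacle is this identity, and specifically arranging the inclusion--exclusion so that the row- and column-covering constraints decouple over the atoms of $X$; once they do, oddness is precisely what collapses each local alternating sum to a clean power of $-2$, and the rest is bookkeeping. As a sanity check one can verify the identity by hand when $G$ is trivial, so that $\cE$ is the category of finite sets and the ample subobjects of a product of two finite sets are the subsets meeting every row and column; this recovers the computation underlying \cite{cantor}.
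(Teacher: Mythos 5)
Your proof is correct, but it takes a genuinely different route from the paper's. The paper does not verify axiom (c) for arbitrary pairs of surjections directly; instead it first factors every surjection in $\cE$ into elementary ``type~1'' surjections ($X_i \amalg X \to X$, adding a redundant copy of an atom) and ``type~2'' surjections (refining within a single atom), reduces by multiplicativity to the case where both maps being base-changed are elementary with the same distinguished index, and then checks the remaining two cases by small explicit computations: for type~1 against type~1 it uses $\sum_{i=1}^m \binom{m}{i}(-2)^{i-1}=1$ for $m$ odd (the one place oddness enters), and for type~2 against type~2 it enumerates the seven ample subsets of $[2]\times[2]$ to get $(-2)^3+4(-2)^2+2(-2)=4$. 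You instead prove the general identity in one shot: with $p$ rows and $q$ columns of orbits over a fixed atom $c$ of $X$, and every block $a\times_c b$ having an odd number of atoms, your inclusion--exclusion gives
\begin{displaymath}
T_c \;=\; \sum_{A\subseteq \cY^c,\, B\subseteq (\cX')^c} (-1)^{|A|+|B|}\,(-1)^{(p-|A|)(q-|B|)} \;=\; (-2)^{p+q-1},
\end{displaymath}
using $\sum_i (-1)^i\binom{p}{i}=0$ and $\sum_{j\ \mathrm{odd}}\binom{q}{j}=2^{q-1}$, and multiplying over $c$ yields exactly $\sum_W (-2)^{\rho(W)}=(-2)^{\rho(Y)+\rho(X')-\rho(X)}$; I have checked this computation and it is sound, with oddness entering at the same conceptual point as in the paper (fibers of maps of atoms have oddly many atoms, so each unconstrained block contributes $(1-2)^{\mathrm{odd}}=-1$). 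Your approach buys uniformity: it avoids justifying the factorization-into-elementary-surjections step and the implicit claim that checking (c) on elementary pieces suffices, and your definition of $\beta$ directly on morphisms as $(-2)^{\rho(Y)-\rho(X)}$ neatly sidesteps the fact that $(-2)^{\rho(X)-1}$ is not a unit in $\bZ$, whereas the paper verifies the regularity relation on objects and leaves integrality of the induced values on morphisms implicit. What the paper's approach buys is that each individual verification is a two-line computation with no global bookkeeping. One small point to make explicit in a final write-up: every morphism of atoms $\fB(Y)\to\fB(X)$ in $\cC$ comes from a unique surjection $Y\to X$ by the full faithfulness theorem of \S4, and every pair $(a,b)$ of orbits over the same $c$ genuinely contributes a non-empty block (oddness already forces this, since $0$ is even), so the product over $c$ is over factors with $p_c,q_c\ge 1$, as surjectivity of $Y\to X$ and $X'\to X$ guarantees.
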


\begin{proof}
Let $X$ be a non-empty object of $\cE$, and write $X=X_1 \sqcup \cdots \sqcup X_n$ where each $X_i$ is an atom. We define a \defn{type~1 surjection} to be one of the form $X_i \amalg X \to X$, where the two maps are the natural maps; we say $i$ is the \defn{distinguished index}. We define a \defn{type~2 surjection} to be one of the form $\coprod_{i=1}^n f_i$ where $f_i \colon X_i' \to X_i$ is a surjection for each $i$, and $f_i$ is an isomorphism for all but one $i$; we again call this special value of $i$ the \defn{distinguished index}. Every surjection in $\cE$ can be factored into a sequence of type~1 and type~2 surjections.

Let $\fB(X') \to \fB(X)$ and $\fB(Y) \to \fB(X)$ be morphisms of atoms in $\cE$. It suffices to show that
\begin{equation} \label{eq:reg}
\beta(\fB(Y) \times_{\fB(X)} \fB(X')) = \frac{\beta(\fB(X')) \beta(\fB(Y))}{\beta(\fB(X))}.
\end{equation}
By factoring the maps $f \colon X' \to X$ and $g \colon Y \to X$ into type~1 and type~2 surjections, we reduced to the case where $f$ and $g$ are either type~1 or type~2. Write $X=X_1 \sqcup \cdots \sqcup X_n$. If the distinguished indices for $f$ and $g$ are different, then \eqref{eq:reg} is easily verified. We thus assume that $f$ and $g$ both have distinguished index~1. If $f$ and $g$ have different types, then \eqref{eq:reg} is also easily verified. We thus have two cases remaining.

Suppose $f$ and $g$ both have type~1. For notational simplicity, we assume $n=1$; the additional summands in $X$ do not change the argument at all. Thus $f \colon X' \to X$ and $g \colon Y \to X$ are maps of atoms. The right side of \eqref{eq:reg} is~1. Let $Y'=Y \times_X X'$ be the fiber product in $\cE$, and let $Y'_1 \sqcup \cdots \sqcup Y'_m$ be the decomposition of $Y'$ into atoms. For a subset $S$ of $[m]$, let $Y'_S=\bigcup_{i \in S} Y'_i$. Each non-empty subobject of $Y'$ is ample, and so
\begin{displaymath}
\fB(Y) \times_{\fB(X)} \fB(X') = \bigsqcup_{S \subset [m], S \ne \emptyset} \fB(Y'_S).
\end{displaymath}
We thus find
\begin{displaymath}
\beta(\fB(Y) \times_{\fB(X)} \fB(X')) = \sum_{i=1}^m \binom{m}{i} (-2)^{i-1} = 1,
\end{displaymath}
where in the final step we use that $m$ is odd (which is our assumption on $\cE$). Thus \eqref{eq:reg} holds.

We now suppose $f$ and $g$ both have type~2. Again, we assume $n=1$ for notational simplicity. Thus $X$ is an atom and $f$ and $g$ are the maps $X \amalg X \to X$. The fiber product $Y'=Y \times_X X'$ in $\cE$ is $X^{\amalg 4}$, where the $4$ is really $[2] \times [2]$, and the ample subobjects of $Y'$ are the ample subsets of $[2] \times [2]$, of which there are seven. We thus find
\begin{displaymath}
\fB(Y) \times_{\fB(X)} \fB(X') = \fB(X^{\amalg 4}) \amalg \fB(X^{\amalg 3})^{\amalg 4} \amalg \fB(X^{\amalg 2})^{\amalg 2},
\end{displaymath}
and so
\begin{displaymath}
\beta(\fB(Y) \times_{\fB(X)} \fB(X')) = (-2)^3 + 4 (-2)^2 + 2(-2) = 4,
\end{displaymath}
which agrees with the right side of \eqref{eq:reg}.
\end{proof}

\begin{remark}
Suppose $\cE$ is pre-Galois and odd, and let $\mu$ be the unique $\bF_2$-valued regular measure on $\cE$. The category $\cA=\uPerm(\cE; \mu)^{\rm kar}$ is abelian by \cite[Theorem~132]{repst}. Let $k$ be a field of characteristic~0, and regard $\beta$ as $k$-valued. One can show that $\uPerm_k(\cC; \beta)^{\rm kar}$ is the category $k[\cA]^{\sharp}_{\chi}$ appearing in \cite[\S 1.2]{dblexp}, where $(-)^{\sharp}$ denotes the additive--Karoubi envelope and $\chi$ is the trivial character of $\bF_2^{\times}$. In particular, $\uPerm_k(\cC; \beta)^{\rm kar}$ is a semi-simple pre-Tannakian category by \cite[Theorem~1.4]{dblexp}. A detailed proof can be found in \cite[\S 5]{cantor} in the case when $G$ is the trivial group; the general case is not much different.
\end{remark}

\subsection{Regular measures}

Since the measures $\alpha$ and $\beta$ are $\bZ$-valued, they yield measures $\alpha_k$ and $\beta_k$ valued in any ring $k$ by extension of scalars. The measure $\alpha_k$ is regular, while $\beta_k$ is regular provided~2 is invertible in $k$. These account for all regular measures:

\begin{proposition} \label{prop:reg}
Let $\mu$ be a regular $k$-valued measure on $\cC$. Then $\mu$ is either $\alpha_k$ or $\beta_k$.
\end{proposition}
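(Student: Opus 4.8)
The plan is to use regularity to reduce the statement to the values of $\mu$ on the atoms of $\cC$, and then to show that these values are forced. As recalled after Definition~\ref{defn:meas}, a regular measure is determined by its values on objects; since every object of $\cC$ is a coproduct of atoms $\fB(X)$ and $\mu$ is additive on coproducts, $\mu$ is determined by the function $m(X) := \mu(\fB(X))$ on nonempty $X \in \cE$, which is unit-valued and satisfies $m(\bone) = 1$. The only genuine constraint is axiom~(c) of Definition~\ref{defn:meas}: rewriting the fiber product $\fB(Y) \times_{\fB(X)} \fB(X')$ via Proposition~\ref{prop:B-prod} and using $\mu(h) = m(\mathrm{source})/m(\mathrm{target})$ for a morphism $h$ of atoms, axiom~(c) becomes exactly the regularity relation \eqref{eq:reg}: for surjections $f \colon X' \to X$ and $g \colon Y \to X$ of nonempty objects,
\[
m(Y)\,m(X') = m(X) \sum_{W} m(W),
\]
the sum being over ample subobjects $W$ of $Y \times_X X'$. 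Thus it suffices to show that the only unit-valued solutions of this system are $m(X) = (-1)^{\rho(X)-1}$ and $m(X) = (-2)^{\rho(X)-1}$.

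Set $c := m(\bone^{\amalg 2})$. The first, and main, task is to prove by induction on $\rho(X)$ that $m(X) = c^{\rho(X)-1}$, i.e.\ that $m$ depends only on the number of atoms and is geometric in it. The cleanest instances of \eqref{eq:reg} are those with atomic base: taking $X=\bone$ and $g \colon Y \to \bone$ with $Y$ an atom gives at once $m(Y^{\amalg 2}) = c\,m(Y)$, since $Y \times_\bone \bone^{\amalg 2} = Y^{\amalg 2}$ has a single ample subobject. More generally, I would invoke the factorization of an arbitrary surjection into the type~1 and type~2 surjections introduced in the construction of $\beta$ above: this confines every instance of \eqref{eq:reg} to configurations over an atomic base, where the ample subobjects form either a Boolean lattice of orbits or the seven-element covering family of $[2]\times[2]$. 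Reading these relations as constraints, rather than verifying them as in the $\beta$ computation, propagates the form $m = c^{\rho-1}$ from smaller to larger $\rho$.

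Once $m(X) = c^{\rho(X)-1}$ is established, the type~2 relation over an atomic base $X$,
\[
m(X^{\amalg 2})^2 = m(X)\bigl(m(X^{\amalg 4}) + 4\,m(X^{\amalg 3}) + 2\,m(X^{\amalg 2})\bigr),
\]
reduces to $c^2 = c^3 + 4c^2 + 2c$, that is $c(c+1)(c+2) = 0$; since $c$ is a unit this forces $c \in \{-1,-2\}$. If $c = -1$ then $m = \alpha_k$. If $c = -2$ then $m = \beta_k$, and here regularity forces $2$ to be invertible (so that $(-2)^{\rho-1}$ is a unit), while the validity of \eqref{eq:reg} on the type~1 configurations forces $\cE$ to be odd (this is precisely where oddness entered in the construction of $\beta$) --- exactly the hypotheses under which $\beta_k$ is defined. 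Since two regular measures agreeing on all atoms are equal, this identifies $\mu$ with $\alpha_k$ or $\beta_k$.

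The main obstacle is the inductive step establishing $m(X) = c^{\rho(X)-1}$ for general multi-orbit objects: over a non-atomic base the ample subobjects of a fiber product form an intricate lattice, so the raw instance of \eqref{eq:reg} is unwieldy. The device that makes this tractable is the type~1/type~2 factorization, which pushes all the combinatorics onto fiber products over atomic bases. I expect the delicate point to be the bookkeeping there: choosing, for each value of $\rho$, enough configurations that the geometric solution is the \emph{unique} unit-valued one --- mirroring, but running in reverse, the verification carried out in the construction of $\beta$.
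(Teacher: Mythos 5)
Your reduction is set up correctly: a regular measure is determined by the unit-valued function $m(X)=\mu(\fB(X))$ on nonempty objects of $\cE$, and axiom~(c) becomes exactly the system \eqref{eq:reg} via Proposition~\ref{prop:B-prod}. Your endgame is also sound as far as it goes: \emph{granting} $m(X)=c^{\rho(X)-1}$, the seven-term relation for the two fold maps $X\amalg X\to X$ over an atom gives $c^3+3c^2+2c=0$, hence $c\in\{-1,-2\}$, and consistency of the relations for surjections of atoms forces oddness when $c=-2$. But the heart of the proposition is precisely the step you defer, namely that $m$ depends only on $\rho(X)$, and your proposed mechanism for it does not work as stated. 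The type~1/type~2 factorization run "in reverse" does not confine the constraints to atomic bases: factoring $f$ and $g$ produces instances of \eqref{eq:reg} whose bases are the intermediate, generally multi-atom, stages. Worse, the atomic-base relations by themselves propagate nothing: for surjections of atoms $X'\to X$ and $Y\to X$, \emph{every} nonempty subobject of $Y'=Y\times_X X'$ is ample, so \eqref{eq:reg} reads $m(Y)\,m(X')=m(X)\sum_{\emptyset\ne S} m(Y'_S)$, where the unions of atoms $Y'_S$ are new multi-atom unknowns, neither products nor previously determined. What you call "bookkeeping" is the actual content of the proof, and none of the configurations you list supplies it.

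The missing idea is to use instances of \eqref{eq:reg} with a \emph{unique} ample subobject, which is how the paper argues. First, $X\amalg Y$ is the fiber product of $X\amalg\bone$ and $\bone\amalg Y$ over $\bone\amalg\bone$ --- a base that is not an atom of $\cE$, so exactly the kind of configuration your atomic-base heuristic discards --- and it has no proper ample subobject; hence $m(X\amalg Y)\,c=m(X\amalg\bone)\,m(Y\amalg\bone)$, which iterates to \eqref{eq:mu-union}. Second, for $X$ an atom, $X^{\amalg n}=X\times\bone^{\amalg n}$ also has a unique ample subobject, giving $m(X^{\amalg n})=m(X)\,m(\bone^{\amalg n})$. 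Combining the two yields $m(X)=\bigl(m(X\amalg\bone)/c\bigr)^n$ for all $n\ge 2$; comparing $n$ with $n+1$ forces $m(X)=1$ for every atom $X$, and then \eqref{eq:mu-union} gives $m(X)=m(\bone^{\amalg\rho(X)})$ in general. Finally, rather than rederiving the values $m(\bone^{\amalg n})$ --- note your cubic is extracted only after assuming the geometric form you have not proved --- the paper restricts $\mu$ to the subcategory $\cC'$ coming from finite sets with trivial $G$-action and invokes the classification of regular measures there, \cite[Theorem~4.5]{cantor}, which pins the restriction down as $\alpha_k$ or $\beta_k$. If you want a self-contained argument you could reprove that classification with your seven-term relation, but only after unique-ample-subobject relations of the above kind have reduced $m$ to the values $m(\bone^{\amalg n})$.
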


\begin{proof}
For notational simplicity, we put $\mu(X)=\mu(\fB(X))$ and $\mu(n)=\mu(\bone^{\amalg n})$. Let $\cE'$ and $\cC'$ be as in Remark~\ref{rmk:alpha}. The restriction of $\mu$ to $\cC'$ is either $\alpha_k$ or $\beta_k$ by \cite[Theorem~4.5]{cantor}. We show that $\mu(X)=\mu(\rho(X))$ for all non-empty objects $X$, which will complete the proof.

Let $X$ and $Y$ be non-empty objects of $\cE$. Then $X \amalg Y$ is the fiber product of $X \amalg \bone$ and $\bone \amalg Y$ over $\bone \amalg \bone$ in $\cE$, and there are no proper subobjects of $X \amalg Y$ that are ample. We thus have a cartesian square in $\cC$
\begin{displaymath}
\xymatrix{
\fB(X \amalg Y) \ar[r] \ar[d] & \fB(X \amalg \bone) \ar[d] \\
\fB(\bone \amalg Y) \ar[r] & \fB(\bone \amalg \bone) }
\end{displaymath}
and so
\begin{displaymath}
\mu(X \amalg Y) = \frac{\mu(X \amalg \bone) \mu(Y \amalg \bone)}{\mu(2)}.
\end{displaymath}
By iteratively applying this equation, we find
\begin{equation} \label{eq:mu-union}
\mu(X_1 \amalg \cdots \amalg X_n)= \frac{\mu(n)}{\mu(2)^n} \prod_{i=1}^n \mu(X_i \amalg \bone),
\end{equation}
for any non-empty objects $X_1, \ldots, X_n$ of $\cE$, provided $n \ge 2$.

Now, suppose $X$ is an atom of $\cE$. Then $X^{\amalg n}$ is the product of $X$ with $\bone^{\amalg n}$ in $\cE$, and there are no proper subobjects of $X^{\amalg n}$ that are ample (since $X$ is an atom). We thus find
\begin{displaymath}
\mu(X^{\amalg n}) = \mu(X) \mu(n).
\end{displaymath}
By computing the left side using \eqref{eq:mu-union}, we find
\begin{displaymath}
\mu(X) = \bigg( \frac{\mu(X \amalg \bone)}{\mu(2)} \bigg)^n
\end{displaymath}
for all $n \ge 2$. We conclude that $\mu(X)=1=\mu(1)$ and $\mu(X \amalg \bone)=\mu(2)$.

Finally, suppose $X=X_1 \amalg \cdots \amalg X_n$, where each $X_i$ is an atom. If $n=1$ then $X$ is an atom, and we have seen that $\mu(X)=\mu(1)$. If $n \ge 2$ then, applying \eqref{eq:mu-union}, we find $\mu(X)=\mu(n)$. We conclude that $\mu(X)=\mu(\rho(X))$ whenever $X$ is non-empty, which completes the proof.
\end{proof}

\begin{remark}
It is possible to write down a presentation of $\Theta(\cC)$ in terms of $\cE$. However, we do not know of any interesting measures on $\cC$ other than our $\alpha$ and $\beta$.
\end{remark}

\subsection{Fast-growing groups}

The following is the precise version of Theorem~\ref{mainthm3}:

\begin{theorem} \label{thm:fast}
Given any integer sequence $(a_n)_{n \ge 2}$, there exists an oligomorphic group $(\fG, \Omega)$ such that the number of orbits of $\fG$ on $\Omega^n$ exceeds $a_n$ for $n \ge 2$, and $\fG$ admits a regular $\bQ$-valued measure.
\end{theorem}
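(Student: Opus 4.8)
The plan is to read off everything except the growth estimate from \S\ref{s:ww}. For any order $\cE$ in a pre-Galois category, the associated group $\fG=\fG_1$ carries the measure $\alpha$ coming from the trivial degree function, and $\alpha$ is regular; extending scalars to $\bQ$ gives a regular $\bQ$-valued measure on $\fG$. Thus the whole content of the theorem is to arrange that $\fG$ grows faster than the prescribed sequence $(a_n)$, which I will do by choosing $\cE$ appropriately.

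First I would convert growth into a count of subobjects. Fix a principal object $X_0\in\cE_1$ and put $\Omega=\fB(X_0)$. Iterating Proposition~\ref{prop:B-prod} with $Z=\bone$ gives $\fB(X_0)^{\times n}=\coprod_W \fB(W)$, the coproduct over subobjects $W\subseteq X_0^{\times n}$ that surject onto each of the $n$ factors. Hence the number of $\fG$-orbits on $\Omega^n$ equals the number of such ample subobjects of $X_0^{\times n}$; this is finite because $\cE$ is finitely powered. This is the key dictionary: the growth of $\fG$ is controlled by the number of ample subobjects of the powers of $X_0$.

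Next I specialize to $\cE=\bS(G)$ for an oligomorphic group $G$, taking $X_0=\Omega_G$, the defining $G$-set. Then $X_0^{\times n}=\Omega_G^n$ and its subobjects are the $G$-stable subsets. The diagonal orbit surjects onto every factor, so every $G$-stable subset containing it is ample; writing $g_n$ for the number of $G$-orbits on $\Omega_G^n$, there are at least $2^{g_n-1}$ ample subobjects, hence at least $2^{g_n-1}\ge g_n$ orbits of $\fG$ on $\Omega^n$. It therefore suffices to produce an oligomorphic $G$ with $g_n>a_n$ for all $n\ge2$, and here the standard Fra\"iss\'e machinery applies: let $M$ be the free-amalgamation limit of the finite structures in a relational language with $N_k$ symbols of each arity $k\ge2$. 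For each $n$ only the finitely many symbols of arity $\le n$ are relevant, so $\Aut(M)$ is oligomorphic, while the $N_n$ symbols of arity exactly $n$ already force $g_n\ge 2^{N_n}$; choosing each $N_n$ large makes $g_n>a_n$.

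The main obstacle is the remaining soft point: to satisfy the statement literally I must exhibit a genuine oligomorphic permutation group, i.e.\ check that $\fG$ acts \emph{faithfully} on $\Omega=\fB(X_0)$ (finiteness of the orbit counts is already in hand). For this I would generalize the Cantor-set argument of \cite{cantor}: a surjection $\Omega_\lambda\to X_0$ is a piece of ``clopen'' data on the universal object $\Omega_1$, and for a sufficiently rich $X_0$ such data separate points, so an element of $\fG$ fixing every surjection onto $X_0$ must be the identity. If one $X_0$ does not separate, enlarging it (or replacing $\Omega$ by a finite disjoint union of $\fB(X_i)$'s, which affects neither the finiteness of orbit counts nor the lower bound) repairs this. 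Once faithfulness is secured, $(\fG,\Omega)$ is oligomorphic, its growth exceeds $(a_n)$ by the estimate above, and it carries the regular measure $\alpha_\bQ$, which finishes the proof.
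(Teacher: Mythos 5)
Most of your proposal tracks the paper's own proof: your free-amalgamation Fra\"iss\'e construction is precisely the method of \cite[\S 3.24]{CameronBook} that the paper cites for the fast-growing group $G$; your dictionary identifying $\fG$-orbits on $\Omega^n$ with subobjects of $X_0^n$ surjecting onto each factor is the iterated form of Proposition~\ref{prop:B-prod} used in the paper; and your diagonal-orbit count gives the growth bound. Your faithfulness step, though vaguer than needed, can be closed without any ``enlarging $X_0$'' fallback: taking $X_0=\Xi$ the defining $G$-set, every atom of $\bS(G)$ is a subquotient of a power of $\Xi$, so every atom of $\cC$ is a quotient of a sub-orbit of a power of $\fB(\Xi)$; an element of $\fG$ fixing $\Omega=\fB(\Xi)$ pointwise then fixes every $\fB(X)$ pointwise and is the identity by Hausdorffness. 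This is exactly the paper's argument.

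The genuine gap is in your first paragraph, where you assert that extending scalars gives ``a regular $\bQ$-valued measure on $\fG$.'' The measure $\alpha$ produced by Construction~\ref{con:main} lives only on the order $\cC=\bS(\fG;\fU)$, i.e., on the $\fU$-smooth sets, whereas the theorem claims $\fG$ admits a regular measure, which in the Harman--Snowden framework means a measure on all of $\bS(\fG)$. Your argument ends with a measure on the stabilizer-class category and never extends it, and this extension is not a formality: the paper's final proposition exhibits a regular $\bQ$-valued measure on an order (the $\beta$ measure for $\cC_1$) that provably does \emph{not} extend to the ambient pre-Galois category, so some structural input is required. The paper supplies it via Corollary~\ref{cor:Gclass} (a consequence of the classification in Proposition~\ref{prop:Gclass}): after conjugation every open subgroup $U$ of $\fG$ satisfies $\fG(X) \subset U \subset \rN\fG(X)$, so $\fU$ is ``large'' in the sense that every open subgroup contains a member of $\fU$ with finite index, and then $\alpha$ extends to a regular $\bQ$-valued measure on $\bS(\fG)$ by \cite[\S 4.2(d)]{repst} --- a step that genuinely uses $\bQ$-coefficients. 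Without this largeness-plus-extension argument, your proof establishes a weaker statement than the theorem.
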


\begin{proof}
Let $(G, \Xi)$ be an oligomorphic group such that $G$ acts transitively on $\Xi$, and the number of orbits of $G$ on $\Xi^n$ exceeds $a_n$ for $n \ge 2$. Such a group exists by the method of \cite[\S 3.24]{CameronBook}. Let $\cE=\bS(G)$, and let $\cC$ be as in this section. We have $\cC=\bS(\fG; \fU)$ for some pro-olgiomorphic group $\fG$ and stabilizer class $\fU$.

Every atom of $\bS(G)$ is a subquotient of a power of $\Xi$, and so every atom of $\cC$ is a quotient of a subpower of $\fB(\Xi)$. This means that if $\Omega$ is the $\fG$-set corresponding to $\fB(\Xi)$ then $\fG$ acts faithfully on $\Omega$, and so $(\fG, \Omega)$ is an oligomorphic group. One easily sees that the number of orbits of $\fG$ on $\Omega^n$ also exceeds $a_n$. (Note that since $G$ acts transitively on $\Xi$ every non-empty $G$-subset of $\Xi^n$ is ample.)

We have seen that $\cC$ carries a regular $\bQ$-valued measure, namely $\alpha$. By Corollary~\ref{cor:Gclass} the stabilize class $\fU$ ``large,'' meaning every open subgroup contains a member of $\fU$ with finite index. It thus follows that $\alpha$ extends to a regular $\bQ$-valued measure on $\bS(\fG)$ \cite[\S 4.2(d)]{repst}. This completes the proof.
\end{proof}

\subsection{Extending measures}

Let $\cC$ be an order in a pre-Galois category $\cC_0$. Given a regular measure $\mu$ on $\cC$, it is important to know if it can be extended to $\cC_0$; for instance, \cite{discrete} shows that this is a necessary condition for $\uPerm(\cC; \mu)$ to admit an abelian envelope. It is not difficult to construct measures that fail to extend in positive characteristic $p$, essentially due to $p$ dividing the order of automorphism groups in $\cC_1$; in \cite[\S 15.8]{repst}, the measure $\mu_t$ for $t \not\in \bF_p$ is such an exmaple. We now give the first example in characteristic~0 of a regular measure that fails to extend.

\begin{proposition}
There exists an order $\cC$ in a pre-Galois category $\cC_0$ in a pre-Galois category and a regular $\bQ$-valued measure on $\cC$ that does not extend to a measure on $\cC_0$.
\end{proposition}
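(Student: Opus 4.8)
The plan is to realize everything inside the framework of Theorem~\ref{thm:order}: I will build a pro-oligomorphic group $\fG$ equipped with a stabilizer class $\fU$, set $\cC_0=\bS(\fG)$ and $\cC=\bS(\fG;\fU)$, and produce a regular $\bQ$-valued measure $\mu$ on $\cC$ whose value is forced, inconsistently, on a single atom $T_0=\fG/U_0$ of $\cC_0$ lying outside $\cC$ (so $U_0\notin\fU$). Since a regular measure is determined by its values on objects (\S\ref{defn:meas}), an extension of $\mu$ to $\cC_0$ is unique if it exists; the whole point is therefore to exhibit two independent instances of axiom~(c) that force two different values for $\bar\mu(T_0)$.

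First I would record the forced-value mechanism. For any atom $T_0=\fG/U_0$ of $\cC_0$ and any cover $p\colon T\to T_0$ with $T=\fG/U\in\cC$ (so $U\subseteq U_0$ with $U\in\fU$), base-changing $p$ along itself gives the standard double-coset decomposition $T\times_{T_0}T=\coprod_{g\in U\backslash U_0/U}\fG/(U\cap gUg^{-1})$. Because $\fU$ is closed under conjugation and finite intersection, every atom $\fG/(U\cap gUg^{-1})$ appearing here lies in $\cC$. Applying axiom~(c) of Definition~\ref{defn:meas} and regularity then yields a closed formula expressing $\bar\mu(T_0)$ through values of $\mu$ already defined on $\cC$, namely $\bar\mu(T_0)=\mu(T)^2\big/\sum_{g}\mu(\fG/(U\cap gUg^{-1}))$. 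Each cover $T\to T_0$ with $T\in\cC$ produces one such formula; the idea is to arrange two covers whose formulas disagree.

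The construction of $\fG$ is where I expect to spend the effort. Using the theory of \S\ref{ss:atomic}, I would build $\fG=\Aut(\Omega)$ as the automorphism group of the universal homogeneous pro-object of a suitable atomic category (equivalently, directly prescribe the lattice of finite groups $U_0\supseteq U,U',\dots$ together with the covers guaranteed by Proposition~\ref{prop:triv-cover}), choosing $\fU$ to be the stabilizer class generated by $U,U'$ and enough small subgroups to be cofinal, but deliberately excluding both $U_0$ and its normal core $\mathrm{core}(U_0)=\bigcap_{g}gU_0g^{-1}$. Thus $T_0$ and its Galois closure $\fG/\mathrm{core}(U_0)$ both lie outside $\cC$, while the two covers $T,T'$ lie inside it. I would then pick a regular $\bQ$-valued $\mu$ on the finitely many relevant atoms of $\cC$ so that the two formulas above disagree, after checking that this choice is consistent with the measure axioms internally to $\cC$ (where no relation links the two formulas, precisely because $T_0$ is absent).

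The main obstacle is point~(ii): making the two cover-formulas genuinely inconsistent. This forces at least one of the covers to be \emph{non-Galois}. Indeed, if $U\trianglelefteq U_0$ then $U\cap gUg^{-1}=U$ for all $g$, the formula collapses to $\bar\mu(T_0)=\mu(T)/[U_0:U]$, and for two Galois covers the common refinement $\fG/(U\cap U')$ again lies in $\cC$ and Galois-covers both, forcing the two ``divide-by-degree'' values to coincide; this is exactly why Galois covers never obstruct in characteristic~$0$, and why the positive-characteristic examples are purely divisibility phenomena. Escaping this automatic consistency requires a non-normal $U\subseteq U_0$ whose conjugate-intersections $U\cap gUg^{-1}$ genuinely refine $T$ without reconstructing $\mathrm{core}(U_0)$ inside $\fU$, so that the resulting formula becomes a nontrivial linear constraint that a generic regular measure on $\cC$ violates. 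Verifying that such a subgroup configuration is realized by an honest pro-oligomorphic $\fG$, and that the violating $\mu$ satisfies all axioms on $\cC$, is the delicate part of the argument.
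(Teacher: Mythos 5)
There is a genuine gap: your proposal is a strategy outline whose two decisive steps are explicitly deferred, and the second of them is not a routine verification but the entire content of the proposition. Your forced-value mechanism is correct as far as it goes --- for a cover $T=\fG/U\to T_0=\fG/U_0$ with $U\in\fU$, axiom~(c) of Definition~\ref{defn:meas} applied to the self-base-change indeed forces $\bar\mu(T_0)=\mu(T)^2\big/\sum_{g\in U\backslash U_0/U}\mu(\fG/(U\cap gUg^{-1}))$, and your observation that Galois covers can never produce an obstruction in characteristic~$0$ (so that non-normal $U\subset U_0$ is necessary) is sound and consistent with the paper's remark that positive-characteristic failures are divisibility phenomena. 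But this only tells you what an example must look like. The fatal step is ``pick a regular $\bQ$-valued $\mu$ on the finitely many relevant atoms of $\cC$ so that the two formulas disagree.'' A measure is not a free assignment on finitely many atoms: it must satisfy axiom~(c) for \emph{every} cartesian square in $\cC$, and regular measures are typically extremely rigid --- indeed Proposition~\ref{prop:reg} of this very paper shows that in the relevant setting $\cC$ carries at most \emph{two} regular measures, $\alpha$ and $\beta$. Your claim that ``no relation links the two formulas, precisely because $T_0$ is absent'' is unsupported and in general false: squares in $\cC$ not involving $T_0$ constrain the values $\mu(\fG/(U\cap gUg^{-1}))$ against each other, and nothing in your plan rules out that every regular measure on $\cC$ automatically satisfies both extension formulas compatibly. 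Producing a group $\fG$, a stabilizer class $\fU$, and an honest measure realizing the inconsistency is exactly the hard existence problem, and the proposal provides no mechanism for it.

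The paper sidesteps hand-construction entirely by harvesting its developed machinery. It takes $G$ to be the homeomorphism group of the Cantor set, $\cE_2$ the category of finitary smooth $G$-sets, and $\cE_1$ the order whose orbits are the sets $X(n)$ of continuous surjections onto $[n]$; the constructions of \S\ref{ss:ww-setup} then yield orders $\cC_1\subset\cC_2$ with common pre-Galois category $\cC_0$. Since $\cE_1$ admits a regular $\bF_2$-valued measure ($X(n)\mapsto(-1)^{n-1}$ reduced mod~$2$), it is odd, so $\cC=\cC_1$ carries the regular measure $\beta$ with $\beta(\fB(X))=(-2)^{\rho(X)-1}$; since $\cE_2$ admits no $\bF_2$-valued measure, it is not odd, so by the classification of Proposition~\ref{prop:reg} the only regular measure on $\cC_2$ is $\alpha$. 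Any $\bQ$-valued extension of $\beta$ to $\cC_2$ would automatically be regular (values on objects divide values on covering objects, and nonzero rationals are units), hence would have to be $\alpha$, contradicting $\beta\ne\alpha$ on $\cC_1$; a fortiori $\beta$ does not extend to $\cC_0$. In other words, the paper's non-extension is detected not by an ad hoc pair of inconsistent double-coset formulas but by a global invariant (oddness) separating $\cE_1$ from $\cE_2$, combined with a complete classification of regular measures on the larger order --- precisely the kind of rigidity statement your plan would need but does not supply.
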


\begin{proof}
Let $G$ be the homeomorphism group of the Cantor set $\fX$, which is oligomorphic via its action on the set of clopen subsets of $\fX$ \cite[\S 3]{cantor}. Let $\cE_2$ be the category of smooth $G$-sets. For a positive integer $n$, we let $X(n)$ be the set of surjective continuous maps $\fX \to [n]$. These are transitive smooth $G$-sets, and every transitive smooth $G$-set is a quotient of some $X(n)$ by a finite group by \cite[Proposition~3.10]{cantor}, which is just a special case of our Proposition~\ref{prop:Gclass}. We let $\cE_1$ be the order in $\cE_2$ consisting of $G$-sets whose orbits are of the form $X(n)$.

We let $\cC_1$ and $\cC_2$ be the categories obtained from $\cE_1$ and $\cE_2$ as in \S \ref{ss:ww-setup}. Since every object of $\cE_2$ is a quotient of an object from $\cE_1$, it follows that every atom of $\cC_2$ admits a (necessarily surjective) map from an atom in $\cC_1$, and so $\cC_1$ is an order in $\cC_2$; let $\cC_0$ be the common pre-Galois category. The category $\cE_1$ is odd; indeed it admits a regular $\bZ$-valued measure by $X(n) \mapsto (-1)^{n-1}$ \cite[Theorem~4.2]{cantor}, which reduces to a regular $\bF_2$-valued measure. The category $\cE_2$ does not admit any $\bF_2$-valued measure by \cite[Theorem~4.5]{cantor}, and is thus not odd. Thus the $\beta$ measure for $\cC=\cC_1$ does not extend to $\cC_2$; indeed, any extension would be regular, but $\cC_1$ only carries the regular measure $\alpha$ by Proposition~\ref{prop:reg}, which does not extend $\beta$. It thus does not extend to $\cC_0$ either.
\end{proof}

\end{document}